\DeclareFontFamily{OMX}{MnSymbolE}{}
\DeclareSymbolFont{MnLargeSymbols}{OMX}{MnSymbolE}{m}{n}
\DeclareFontShape{OMX}{MnSymbolE}{m}{n}{
    <-6>  MnSymbolE5
   <6-7>  MnSymbolE6
   <7-8>  MnSymbolE7
   <8-9>  MnSymbolE8
   <9-10> MnSymbolE9
  <10-12> MnSymbolE10
  <12->   MnSymbolE12
}{}
\DeclareFontShape{OMX}{MnSymbolE}{b}{n}{
    <-6>  MnSymbolE-Bold5
   <6-7>  MnSymbolE-Bold6
   <7-8>  MnSymbolE-Bold7
   <8-9>  MnSymbolE-Bold8
   <9-10> MnSymbolE-Bold9
  <10-12> MnSymbolE-Bold10
  <12->   MnSymbolE-Bold12
}{}
\let\llangle\@undefined
\let\rrangle\@undefined
\DeclareMathDelimiter{\llangle}{\mathopen}%
                     {MnLargeSymbols}{'164}{MnLargeSymbols}{'164}
\DeclareMathDelimiter{\rrangle}{\mathclose}%
                     {MnLargeSymbols}{'171}{MnLargeSymbols}{'171}
\newcommand{\bil}[2]{\left\llangle #1, #2 \right\rrangle}
\newcommand{\R}{\mathbb{R}} %real numbers
\newcommand{\Q}{\mathbb{Q}} %rational numbers
\newcommand{\uhp}{\mathbb{H}} % upper half plane
\newcommand{\C}{\mathbb{C}} % complex numbers
\newcommand{\Z}{\mathbb{Z}} %integers
\newcommand{\N}{\mathbb{N}} % the natural numbers
\renewcommand{\Im}{\mathrm{Im}}
\renewcommand{\Re}{\mathrm{Re}}
\newcommand{\e}{\mathfrak{e}}
\newcommand{\abs}[1]{\left|#1\right|}
\newcommand{\K}{\mathcal{K}}
\newcommand{\legendre}[2]{\left( \frac{\mathstrut #1}{#2} \right)} %legendre symbol
\DeclareMathOperator{\SL}{SL}
\DeclareMathOperator{\sgn}{sgn}
\DeclareMathOperator{\PSL}{PSL}
\DeclareMathOperator{\Mp}{Mp}
\DeclareMathOperator{\tr}{tr}
\renewcommand{\H}{\uhp}
\newtheorem{theorem}{Theorem}
\newtheorem{proposition}[theorem]{Proposition}
\newtheorem{lemma}[theorem]{Lemma}
\numberwithin{theorem}{section}
\numberwithin{equation}{section}
\theoremstyle{definition}
\newtheorem{example}[theorem]{Example}
\newtheorem{definition}[theorem]{Definition}
\newtheorem{remark}[theorem]{Remark}
\newcommand{\Amod}[1]{A_{#1}^{\mathtt{mod}}}
\newcommand{\Aexp}[1]{A_{#1}^!}
\newcommand{\calQ}{\mathcal{Q}}
\newcommand*{\bigs}[1]{\vcenter{\hbox{\scalebox{1.5}{\ensuremath#1}}}}
\DeclareMathOperator*{\CT}{CT}
\DeclareMathOperator*{\erfc}{erfc}
\renewcommand{\pmod}[1]{\  \,  \left(\mathrm{mod} \,  #1 \right)}
\title{On the modular completion of certain generating functions}
\author{Kathrin Bringmann}
\email{kbringma@math.uni-koeln.de}
\author{Stephan Ehlen}
\email{stephan.ehlen@math.uni-koeln.de}
\author{Markus Schwagenscheidt}
\email{mschwage@math.uni-koeln.de}
\address{Mathematisches Institut, University of Cologne, Weyertal 86-90, D-50931 Cologne, Germany}
\begin{document}
\begin{abstract}
 We complete several generating functions to non-holomorphic modular forms in two variables. For instance, we consider the generating function of a natural family of meromorphic modular forms of weight two. We then show that this generating series can be completed to a smooth, non-holomorphic modular form of weights $\frac32$ and two.
Moreover, it turns out that 
the same function is also a modular completion of the generating function
of weakly holomorphic modular forms of weight $\frac32$, 
which prominently appear in work of Zagier \cite{zagiertraces} on traces of singular moduli.
\end{abstract}
\maketitle

\section{Introduction and statement of results}
\label{sec:introduction}

\subsection{Modular completions}
When studying an interesting sequence $(a_n)_{n\in\Z}$,
it is often helpful to consider the generating function
\[
   \sum_{n\in \Z}a_nq^n.
\]
An important class of examples is given by theta functions associated with positive definite quadratic forms,
which are generating functions of representation numbers.
Studying the analytic properties of such a generating function
provides rich analytic tools to obtain information about the sequence $a_n$.
Famous examples include explicit formulas for the number of representations of a positive integer as a sum of four and eight
squares, whose generating functions are modular forms of weight two and four, respectively,
or the partition function $p(n)$, whose generating function
is essentially a modular form of weight $-\frac{1}{2}$. The latter fact plays a crucial role in the ingenious proof of
Rademacher's exact formula \cite[Theorem~5.1]{andrewspartitions} for $p(n)$.

In a different direction, the following generating function of certain cusp forms $f_{k,d}$ was considered by Kohnen \cite{kohnen85}.
For integers $k \geq 2$, he defined ($e(x):=e^{2\pi ix}$, $\tau,z \in \H := \{z \in \C\ : \Im(z)>0 \}$)
\[
  \Omega_{k}(\tau,z):=\sum_{d= 1}^\infty d^{k-\frac12}f_{k,d}(z)e(d\tau),
\]
where for $d\in\N$
\begin{equation*}
  f_{k,d}(z):=\sum_{Q\in\mathcal Q_{d}}Q(z,1)^{-k}.
\end{equation*}
Here, for a discriminant $\delta$, $\mathcal Q_\delta$ denotes the set of integral binary quadratic forms of discriminant $\delta$. It is not hard to see that
each $f_{k,d}$ is a cusp form of weight $2k$ for the full modular group $\Gamma:=\SL_2(\Z)$.
These functions were introduced by Zagier \cite{zagierrealquadratic} in his study of the Doi-Naganuma lift. Katok \cite{katok} showed that they can be written as hyperbolic Poincar\'e series. Using the modularity of the $f_{k,d}$'s it follows that $z\mapsto \Omega_k(z;\tau)$ is modular of weight $2k$. It turns out that $\tau\mapsto \Omega_k(z;\tau)$ is also modular (of weight $k+\frac12$). To see this, one rewrites $\Omega_k$ as (up to a constant)
\[
  \sum_{n=1}^\infty n^{k-1} \sum_{d\mid n} d^k P_{k+\frac12,d^{2}}(\tau) e(nz),
\]
where the functions $P_{k+\frac12,d^{2}}$ are certain exponential Poincar\'e series. A key property of $\Omega_k(\tau,z)$ is that it is the holomorphic kernel function for the Shimura and Shintani lifts. To be more precise, for $f$ a cusp form of weight $2k$ and $g$ a cusp form of weight $k+\frac12$ in Kohnen's plus-space, the Shimura lift of $g$ basically equals $\langle g,\Omega_k(-\overline{z},\,\cdot\,)\rangle$ and the Shintani lift of $f$ is essentially $\langle f, \Omega_k(\,\cdot\, ,-\overline{\tau})\rangle$ (see \cite{kohnen85}).

In the following we consider a related generating function, where several complications arise.
As usual, we denote by $j$ the unique weakly holomorphic modular function for $\Gamma$ with a simple pole at the cusp $i\infty$.
Its derivative $j'$ is a weakly holomorphic modular form of weight two for $\Gamma$. It is well-known that $\Gamma$ acts on $\calQ_{-d}$ with finitely many orbits if $d \neq 0$.
For each positive $d$, we consider the meromorphic modular form of weight two given by
 \begin{equation*}
    F_d(z) := -2i\sum_{Q \in \Gamma\backslash\calQ_{-d}}\frac{1}{\omega_{Q}}\frac{j'(z)}{j(z)-j(z_Q)},
  \end{equation*}
where $\omega_{Q}$ is the size of the stabilizer of $Q$ in $\PSL_2(\Z)$ and $z_Q$ is the complex multiplication
(CM) point associated with $Q$, i.e., $z_Q$ is the unique root of $Q(z,1)$ contained in $\H$.
These functions are CM traces of
\begin{equation}\label{denominators}
\frac{j'(z)}{j(z)-j(\tau)} = -2\pi i \sum_{n=0}^\infty j_n(\tau) e(nz),
\end{equation}
where $j_n$ for $n \in \N_0$ is the unique weakly holomorphic modular function for $\Gamma$ with Fourier expansion $j_n(\tau)= e(-n\tau) + O(e(\tau))$, and the equality holds for $\Im(z) > \Im(\tau)$. Note that \eqref{denominators} is equivalent to the famous denominator formula for the Monster Lie algebra.

In this paper, we are interested in the (formal) generating function
\[
  A(\tau, z) := \sum_{d > 0} F_d(z) e(d\tau).
\]
However, leaving convergence issues aside,
since $F_d$ has poles at all of the CM points of discriminant $-d$ and the set of all CM points is dense in $\uhp$,
the resulting function would be undefined
on a dense set in $\H$ and thus badly behaved.
In this article, we study how to ``complete'' such a formal generating function to converge everywhere on $\uhp \times \uhp$ to a smooth function which is modular in both variables. 
This is analogous to the modular completion of the mock theta functions in the work of Zwegers \cite{Zwegers:thesis},
which turn out to be harmonic Maass forms.
To state our result, we extend the definition of $F_d$ to include non-positive discriminants. 
To be more precise, for $d\in -\N_0$ such that $-d$ is a discriminant, we set
\begin{equation*}
F_d(z) :=
\begin{dcases}
\frac{2\pi}{6}E_{2}^{*}(z) & \text{ if } d=0 ,\\
0 & \text{ if } d<0,
\end{dcases}
\end{equation*}
where $E_{2}^{*}$ is the non-holomorphic Eisenstein series of weight $2$ for $\Gamma$ defined in \eqref{eq:E2} below.
Furthermore, we let $(z=x+iy, \tau =u +iv)$
\[
\widetilde{F}_d(z; v) := -2 \sum_{Q\in \calQ_{-d}\setminus\{0\}}\frac{Q_z}{Q(z,1)}\exp\left(-4\pi v\frac{|Q(z,1)|^2}{y^2}\right),
\]
where for $Q=[a,b,c]$, we set $Q_z:=\frac1y(a|z|^2+bx+c)$. We obtain the following result (see Theorem \ref{thm:LpreimR0Siegel} for a proof).

\begin{theorem} \label{introthm:Fdgenser}
For $d > 0$ the function $(z,v) \mapsto F_{d}(z) + \widetilde{F}_{d}(z,v)$ extends to a smooth function on $\H \times \R^{+}$ if we define its value at a CM point $z_{Q}$ of discriminant $-d$ by $\lim_{z \to z_{Q}}(F_{d}(z) + \widetilde{F}_{d}(z,v))$. Furthermore, the series
\[
  A^*(\tau,z) := \sum_{d \in \Z} \left(F_d(z)+\widetilde{F}_d(z; v)\right) e(d\tau),
\]
converges locally uniformly on $\H \times \H$ to a smooth function. It is modular of weight two for $\Gamma$ in $z$ and of weight $\frac{3}{2}$ in $\tau$ for $\Gamma_0(4)$.
\end{theorem}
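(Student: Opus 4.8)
The plan is to treat the holomorphic, pole-carrying piece $F_d(z)$ and the smooth Gaussian piece $\widetilde F_d(z;v)$ separately near the CM points, and then to read off both modularities of the assembled $d$-sum from an underlying Siegel theta function. First I would prove the smoothness assertion by a purely local analysis at each point $z_Q$ of discriminant $-d$, working jointly in $(z,v)$ since the singularity sits only in $z$. Writing $z_Q=\tfrac{-b+i\sqrt d}{2a}$ for $Q=[a,b,c]$, one gets $Q_{z_Q}=\sqrt d$ and the factorisation $Q(z,1)=a(z-z_Q)(z-\bar z_Q)$, so $2az_Q+b=i\sqrt d$; hence the single term of $\widetilde F_d$ indexed by $Q$ contributes a simple pole with residue $-2\,Q_{z_Q}/(2az_Q+b)=2i$, the Gaussian being $1$ at $z_Q$. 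On the other side, near $z_Q$ the relevant summand of $F_d$ behaves like $-2i\,\omega_Q^{-1}j'(z)/(j(z)-j(z_Q))$, whose residue is $\omega_Q$ because the extra vanishing of $j'$ at the elliptic points $i,\rho$ is compensated exactly by $\omega_Q\in\{1,2,3\}$; this gives residue $-2i$, and the simple poles cancel.

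To see that what remains is genuinely smooth rather than merely bounded, I would check that the residual $\bar z$-dependence is harmless. Expanding $Q_z=a(z-z_Q)(\bar z-\bar z_Q)/y+\sqrt d$ splits $Q_z/Q(z,1)$ into a manifestly smooth term plus a term with a holomorphic coefficient times $(z-z_Q)^{-1}$, while $\exp(-4\pi v|Q(z,1)|^2/y^2)-1=(z-z_Q)\cdot(\text{smooth})$ since $|Q(z,1)|^2=a^2|z-z_Q|^2|z-\bar z_Q|^2$ vanishes to first order in $(z-z_Q)$. Consequently the difference of the principal parts of $F_d$ and $\widetilde F_d$ is smooth, so $F_d+\widetilde F_d$ extends smoothly across $z_Q$. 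For the convergence of the $d$-sum I would use $|Q(z,1)|^2/y^2=Q_z^2+\disc(Q)$ to bound each summand of $\widetilde F_d(z;v)e(d\tau)$ by $\exp(-4\pi v|Q(z,1)|^2/y^2)e^{-2\pi d v}$, so that the Gaussian controls the inner sum over $\calQ_{-d}$ and the factor $e^{-2\pi dv}$ the outer sum; the meromorphic part is handled by the polynomial (class-number) bound on the number of $\Gamma$-orbits together with a uniform estimate for $F_d+\widetilde F_d$ away from the poles. The care needed here is precisely the uniformity near the dense set of CM points, where one must invoke the cancellation just established instead of estimating $F_d$ and $\widetilde F_d$ individually.

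For modularity, weight two in $z$ is the elementary half: $\widetilde F_d$ runs over all of $\calQ_{-d}\setminus\{0\}$, the majorant $|Q(z,1)|^2/y^2$ is invariant under the simultaneous action of $\Gamma$ on $z$ and on forms, and $Q_z/Q(z,1)$ picks up the weight-two automorphy factor, so reindexing the sum gives weight two; together with the known weight-two modularity of each $F_d$ this proves modularity of $A^*$ in $z$. The weight $\tfrac32$ transformation in $\tau$ for $\Gamma_0(4)$ is the main obstacle, and I would obtain it by identifying $A^*$ with a Siegel theta lift for the lattice of integral binary quadratic forms of signature $(1,2)$. A short computation gives $\partial_{\overline\tau}A^*=\tfrac{i}{2}\sum_d(\partial_v\widetilde F_d)e(d\tau)$ with $\partial_v\widetilde F_d=8\pi\sum_Q y^{-2}Q_z\,\overline{Q(z,1)}\exp(-4\pi v|Q(z,1)|^2/y^2)$, a genuine pole-free theta series; thus $A^*$ is an $L_\tau$-preimage of a theta function, and it arises by applying the weight-raising operator $R_0$ in the $z$-variable to a Siegel theta kernel that is modular of weight $0$ in $z$ and weight $\tfrac32$ in $\tau$.

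Granting this, the $\tau$-modularity is immediate: Poisson summation endows the theta kernel with the weight $\tfrac32$ transformation under $\Gamma_0(4)$ through the Weil representation, and the raising/lowering operators in $z$ leave the $\tau$-automorphy untouched. The delicate points, which I expect to be the crux of the whole argument, are to pin down the exact theta kernel so that the holomorphic traces $F_d$ emerge correctly (for instance as a holomorphic projection or boundary contribution of a regularised theta integral) and to justify interchanging that regularised integral with the differential operators in $z$ and $\tau$.
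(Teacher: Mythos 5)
Your local analysis at the CM points is correct, and it is in fact more self-contained than the paper's treatment: the residue computation (each term of $\widetilde{F}_d$ contributes $2i$, each class in $F_d$ contributes $-2i$, with the $\pm Q$ bookkeeping matching on both sides), the expansion $Q_z = a(z-z_Q)(\overline{z}-\overline{z}_Q)/y+\sqrt{d}$, and the observation that the Gaussian minus $1$ absorbs the leftover $(z-z_Q)^{-1}$ give a clean direct proof of the first assertion, whereas the paper deduces real-analyticity of $F_d+\widetilde{F}_d$ from its identification with the regularized pairing $\bil{f_{d}-\mathcal{P}_{d,v}}{R_{0,z}(\Theta_{S,D})}$ in Theorem \ref{thm:LpreimR0Siegel}. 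Your differential identity $\partial_{\overline{\tau}}A^*=\tfrac{i}{2}\sum_d(\partial_v\widetilde{F}_d)e(d\tau)$, relating $A^*$ to $R_{0,z}(\Theta_{S})$ under lowering, is also right (it is \eqref{eq L32 ThetaKM introduction}). The convergence argument, however, has a genuine gap. As a minor point, your termwise bound is off: for $d>0$ one has $|Q(z,1)|^2/y^2=Q_z^2-d$, so the Gaussian carries a factor $e^{+4\pi dv}$ that overwhelms $e^{-2\pi dv}$, and one must use definiteness ($Q_z^2\geq d$) to restore decay. The serious issue is that no termwise estimate can be locally uniform in $z$: every compact set with nonempty interior contains CM points of discriminant $-d$ for infinitely many $d$, and there both $F_d$ and the prefactor $Q_z/Q(z,1)$ blow up. What is needed is a bound for $\sup_K|F_d+\widetilde{F}_d|$ that is summable against $e^{-2\pi dv}$ uniformly in $d$; saying one must ``invoke the cancellation'' names this problem but is not an estimate. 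The paper never estimates the $d$-sum directly: it builds the completed function as an $L$-preimage via Proposition \ref{prop:Lsharphalfint}, identifies it with $\Theta_{KM,D}^{\#,z}$, whose Fourier expansion in $z$ has pole-free coefficients built from $g_{Dn^2}$ and $\widetilde{g}_{Dn^2}$, and carries out the hard quantitative work there (Propositions \ref{prop:KMsharp-smooth}, \ref{prop:gest}, \ref{prop:gtest}, including Kloosterman zeta bounds).

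The second gap is the weight $\tfrac32$ modularity in $\tau$. Your inference ``$A^*$ is an $L_{\frac32,\tau}$-preimage of (a raising of) a modular theta kernel, hence inherits its automorphy'' is invalid: lowering preimages are determined only up to functions holomorphic in $\tau$, which need not transform at all — the divergent naive series $\sum_{d>0}F_d(z)e(d\tau)$ is formally just as good a preimage. (Also, the Siegel theta function here has weight $-\tfrac12$ in $\tau$, not $\tfrac32$; $\tfrac32$ is the weight of $A^*$ itself.) To obtain modularity one must exhibit a preimage that is modular \emph{by construction} and then identify it with $A^*$. This is exactly what the paper does: Proposition \ref{prop:Lsharphalfint} (from \cite{ehlensankaran}) produces a preimage whose $\tau$-Fourier coefficients are the regularized pairings $\bil{f_{d}-\mathcal{P}_{d,v}}{R_{0,z}(\Theta_{S,D})}$, and Theorem \ref{thm:LpreimR0Siegel} evaluates these — the Borcherds lift of $f_d$ from \cite{bruinierono} produces the traces of $j'/(j-j(z_Q))$, and unfolding against $\mathcal{P}_{d,v}$ produces $\widetilde{F}_d$ — showing that they equal $F_d+\widetilde{F}_d$; finally Proposition \ref{prop:KMDsharp=RSsharp} (using $M_{\frac32,\varrho_L}=\{0\}$) ties this to the $z$-modular, smooth object $\Theta_{KM,D}^{\#,z}$. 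These steps are precisely the ``delicate points'' you deferred at the end, and they constitute the substance of the theorem, so the proposal as it stands leaves the two main claims unproved.
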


We remark that the our completion $A^*(\tau, z)$ is closely related
to upcoming work of Bruinier, Funke and Imamoglu \cite{bif2},
in which the lift of meromorphic modular forms like \eqref{denominators}
against the Siegel theta function is studied.

Theorem~\ref{introthm:Fdgenser} immediately begs the question whether the Fourier coefficients of $A^{*}(\tau,z)$ in $z$ are also of interest. 
As it turns out, the very same function is also a completion of a formal generating function of another natural family of modular forms, which we describe in the following.
For a positive discriminant $D$ we denote by $g_{D}$ the unique weakly holomorphic modular form of weight $\frac32$ for $\Gamma_0(4)$ in Kohnen's plus space having principal part $e(-D \tau)$.
In their influential paper \cite{dit}, Duke, Imamoglu, and T\'oth considered the finite sum
\begin{equation}
  \label{eq:DITfin}
  g_0(\tau) - \sum_{0 < m \leq M} \sum_{n \mid m} n g_{n^2}(\tau) e(mz),
\end{equation}
where $g_0:=\mathcal H$ is Zagier's non-holomorphic Eisenstein series of weight $\frac32$ for $\Gamma_0(4)$ defined in \eqref{eq:H}. After taking the (regularized) inner product of this sum against $g_{D}$ for a positive non-square discriminant $D > 0$ the limit as $M \to \infty$ exists. They then show that this limit is the generating function of the $D$-th traces of cycle integrals of the modular functions $j_{m}$ and a modular integral of weight two with rational period function.
Without taking the inner product first, the limit as $M \to \infty$ in \eqref{eq:DITfin} does not exist.
However, we show that it can again be completed to a convergent generating function which is modular in both variables. It turns out that this generating function in fact equals $A^{*}(\tau,z)$, giving its Fourier expansion in $z$ (see Theorem \ref{thm:KMD} for a precise statement in the vector-valued setting).

The function $A^{*}$ satisfies differential equations under the Laplace operators, and it is related to the Siegel and the Kudla-Millson theta functions by the Maass lowering operator. In our particular situation, the two theta functions are explicitly given by
\begin{align*}
\Theta_{S}(\tau,z) &:= 4v\sum_{d \in \Z}\sum_{ Q \in \mathcal{Q}_{-d}}\exp\left(-4\pi v \frac{|Q(z,1)|^{2}}{y^{2}}\right)e(d\tau),\\
\Theta_{KM}(\tau,z) &:= \sum_{d \in \Z}\sum_{Q \in \mathcal{Q}_{-d}}\left(4vQ_z^2-\frac{1}{2\pi} \right)\exp\left(-4\pi v \frac{|Q(z,1)|^{2}}{y^{2}}\right)e(d\tau),
\end{align*}
and they are smooth functions in $\tau$ and $z$ which transform like modular forms of weight $-\frac{1}{2}$ and $\frac{3}{2}$ in $\tau$ for $\Gamma_{0}(4)$, respectively, and weight $0$ in $z$ for $\Gamma$.

\begin{proposition}\label{prop: differential equations KudlaMillson theta introduction}
The function $A^{*}$ satisfies the differential equations
	\begin{align*}
	4\Delta_{\frac{3}{2},\tau}\left( A^{*}(\tau,z) \right) = \Delta_{2,z}\left(A^{*}(\tau,z) \right),
	\end{align*}
	where $\Delta_{k}$ is the weight $k$ hyperbolic Laplace operator defined in \eqref{eq Laplace operator}, and
	\begin{align}
L_{\frac{3}{2},\tau}\left(A^{*}(\tau,z)\right) &= -\frac{1}{16\pi}R_{0,z}\left(\Theta_{S}(\tau,z)\right), \label{eq L32 ThetaKM introduction} \\
L_{2,z}\left(A^{*}(\tau,z)\right) &= \Theta_{KM}(\tau,z), \notag
\end{align}
where $L_{\frac{3}{2},\tau} := -2iv^2\frac{\partial}{\partial \overline{\tau}}$, $L_{2,z} := -2iy^2\frac{\partial}{\partial \overline{z}}$, and $R_{0,z} := 2i\frac{\partial}{\partial z}$ are Maass lowering and raising operators.
\end{proposition}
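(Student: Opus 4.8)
The plan is to apply each operator term by term. This is legitimate because, by Theorem~\ref{introthm:Fdgenser}, the series defining $A^{*}$, $\Theta_{S}$, and $\Theta_{KM}$ converge locally uniformly on $\H\times\H$, and the same persists after differentiation (the polynomial prefactors are dominated by the Gaussians), so the sums are smooth and may be differentiated summand by summand. Everything then reduces to identities for a single form $Q=[a,b,c]\in\calQ_{-d}$, which I would check on the complement of the discrete set of CM points and extend by continuity. Two elementary identities drive the computation: the majorant identity
\[
  \frac{|Q(z,1)|^{2}}{y^{2}} = Q_{z}^{2}-d ,
\]
obtained by expanding $z=x+iy$, and the derivative formula $\frac{\partial Q_{z}}{\partial\overline{z}}=-\frac{i\,Q(z,1)}{2y^{2}}$ (hence $\frac{\partial Q_{z}}{\partial z}=\frac{i\,\overline{Q(z,1)}}{2y^{2}}$, as $Q_{z}$ is real).

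For the relation $L_{2,z}(A^{*})=\Theta_{KM}$ I would first note that $\frac{\partial}{\partial\overline{z}}$ annihilates every meromorphic $F_{d}$ with $d>0$, so these drop out, while the $d=0$ term contributes only $L_{2,z}(\frac{2\pi}{6}E_{2}^{*})$, a constant. On the completion I would differentiate a single summand $-2\frac{Q_{z}}{Q(z,1)}\exp(-4\pi v|Q(z,1)|^{2}/y^{2})$ by the product rule: the derivative of the exponential produces, via the two identities above, a term proportional to $vQ_{z}^{2}$, and the derivative of $Q_{z}$ cancels the denominator $Q(z,1)$ and leaves a constant. The result is precisely the $Q$-summand $(4vQ_{z}^{2}-\frac{1}{2\pi})\exp(-4\pi v|Q(z,1)|^{2}/y^{2})$ of $\Theta_{KM}$. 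Summing over $Q$ and $d$ and collecting the exceptional pieces — the $E_{2}^{*}$-contribution together with the $Q=0$ summand of discriminant zero, which appears in $\Theta_{KM}$ but is excluded from $\widetilde{F}_{0}$ — yields the identity.

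For $L_{\frac32,\tau}(A^{*})=-\frac{1}{16\pi}R_{0,z}(\Theta_{S})$ I would use that $\frac{\partial}{\partial\overline{\tau}}$ kills the entire holomorphic part $\sum_{d}F_{d}(z)e(d\tau)$, each summand being holomorphic in $\tau$. On the completion $\frac{\partial}{\partial\overline{\tau}}$ acts only through $v=\Im(\tau)$ and brings down a factor $-4\pi|Q(z,1)|^{2}/y^{2}$; writing $|Q(z,1)|^{2}=Q(z,1)\overline{Q(z,1)}$ cancels the denominator and leaves $\overline{Q(z,1)}$. Computing $R_{0,z}(\Theta_{S})=2i\frac{\partial}{\partial z}(4v\sum\exp(\cdots))$ with the formula for $\frac{\partial Q_{z}}{\partial z}$ produces the same shape $v^{2}Q_{z}\overline{Q(z,1)}y^{-2}\exp(\cdots)$ summand by summand, and comparing the constants gives the claim.

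The Laplace relation then follows formally from the two lowering relations. I would verify the operator factorizations $\Delta_{2,z}=-R_{0,z}L_{2,z}$ and $\Delta_{\frac32,\tau}=-R_{-\frac12,\tau}L_{\frac32,\tau}$, and the auxiliary theta identity $R_{-\frac12,\tau}(\Theta_{S})=-4\pi\,\Theta_{KM}$ (a direct Gaussian computation expressing that $\Theta_{KM}$ is the raising of $\Theta_{S}$ in $\tau$). Using these and the commutativity of operators in the two variables,
\[
  \Delta_{2,z}(A^{*})=-R_{0,z}L_{2,z}(A^{*})=-R_{0,z}(\Theta_{KM}),
\]
\[
  4\Delta_{\frac32,\tau}(A^{*})=-4R_{-\frac12,\tau}L_{\frac32,\tau}(A^{*})=\tfrac{1}{4\pi}R_{0,z}R_{-\frac12,\tau}(\Theta_{S})=-R_{0,z}(\Theta_{KM}),
\]
so both sides coincide. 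I expect the main difficulty to be bookkeeping rather than conceptual: isolating the exceptional $E_{2}^{*}$- and $Q=0$-terms correctly, and — most delicately — justifying the termwise operations across the CM points, where the individual $F_{d}$ and $\widetilde{F}_{d}$ are singular although their sum is smooth. There I would argue on the complement of the (discrete) CM locus and invoke the smooth extension from Theorem~\ref{introthm:Fdgenser} to propagate each identity by continuity.
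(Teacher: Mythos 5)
Your route---verifying all three identities by termwise differentiation of the $\tau$-expansion---is genuinely different from the paper's, and as written it has two genuine gaps.

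The first is the justification of termwise differentiation. Your argument (``the polynomial prefactors are dominated by the Gaussians'') is valid for the sum over $Q$ inside a single $\widetilde{F}_d$ and for the theta series $\Theta_S$, $\Theta_{KM}$, but it says nothing about the sum over $d$, which is where all the analytic difficulty of the paper sits: the completed terms $F_d+\widetilde{F}_d$ are not Gaussian-dominated in $d$, and the individual pieces $F_d$, $\widetilde{F}_d$ are singular at the CM points of discriminant $-d$, whose union over $d$ is \emph{dense} in $\H$, not discrete as you claim at the end; so the split series cannot be treated separately on any open set, and controlling the completed terms as $d\to\infty$ is exactly what forces the paper's detour through the $z$-expansion and the appendix estimates (Propositions \ref{prop:gest} and \ref{prop:gtest}, Kloosterman zeta bounds). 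The paper in fact never differentiates the $\tau$-expansion termwise: the $L_{2,z}$-identity holds \emph{by construction}, since the completed function is (the scalar avatar of) the preimage $\Theta_{KM,D}^{\#,z}$ produced by Proposition \ref{prop:Lsharpint}; the identity \eqref{eq L32 ThetaKM} is proved abstractly in Proposition \ref{prop: differential equations KudlaMillson theta}, by showing both sides are $L_{2,z}$-preimages of the same function (via Proposition \ref{prop: differential equations}) whose difference lies in $M_2=\{0\}$; and the one termwise differentiation that is needed (of the $z$-expansion, in $\tau$, Proposition \ref{prop:KMsharp-smooth}) is what the appendix exists to justify. Your gap is fillable for the two lowering identities, because the termwise $\partial_{\overline{z}}$- and $\partial_{\overline{\tau}}$-derivative series are honest theta series (the meromorphic/holomorphic parts drop out): locally uniform convergence of the $A^*$-series (Theorem \ref{introthm:Fdgenser}) plus locally uniform convergence of the differentiated series identifies the derivative distributionally, hence classically by smoothness. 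But that is a different argument from the one you give, and it has to be made.

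The second gap is the constants. Executing your own computation with your (correct) identities $\partial_{\overline{z}}Q_z=-\tfrac{i}{2y^2}Q(z,1)$ and $|Q(z,1)|^2/y^2=Q_z^2-d$ gives
\[
L_{2,z}\left(-2\frac{Q_z}{Q(z,1)}\,e^{-4\pi v|Q(z,1)|^2/y^2}\right)
=\left(2-16\pi vQ_z^2\right)e^{-4\pi v|Q(z,1)|^2/y^2}
=-4\pi\left(4vQ_z^2-\frac{1}{2\pi}\right)e^{-4\pi v|Q(z,1)|^2/y^2},
\]
i.e.\ $-4\pi$ times the Kudla--Millson summand, not the summand itself; likewise the $\tau$-side computation yields $L_{\frac32,\tau}(A^*)=\frac14 R_{0,z}(\Theta_S)$ rather than $-\frac{1}{16\pi}R_{0,z}(\Theta_S)$. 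So your assertion that the result is ``precisely'' the $Q$-summand of $\Theta_{KM}$ is arithmetically false, and a genuine execution of your method would have flagged that the proposition as stated is incompatible with the normalization of $A^*$ in Theorem \ref{introthm:Fdgenser}: with that normalization $A^*$ is the scalar form of $(R_{0,z}\Theta_{S,1})^{\#,\tau}=-4\pi\,\Theta_{KM,1}^{\#,z}$ (Proposition \ref{prop:KMDsharp=RSsharp} and Theorem \ref{thm:LpreimR0Siegel}), so the two lowering identities hold verbatim only for $-\frac{1}{4\pi}A^*$. Only the Laplace identity is insensitive to this rescaling; your derivation of it from the two lowering relations (the factorizations $\Delta_{2,z}=-R_{0,z}L_{2,z}$, $\Delta_{\frac32,\tau}=-R_{-\frac12,\tau}L_{\frac32,\tau}$ together with $R_{-\frac12,\tau}(\Theta_S)=-4\pi\Theta_{KM}$) is correct and coincides with the paper's own argument.
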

In Proposition \ref{prop: differential equations KudlaMillson theta},
we prove the corresponding identity in a vector-valued setting,
which immediately implies Proposition \ref{prop: differential equations KudlaMillson theta introduction}. For the proof we use a method from \cite{ehlensankaran} which yields distinguished $L_{k}$-preimages of a certain class of smooth automorphic forms of moderate growth, generalizing the surjectivity of the $\xi_{k}$-operator from harmonic Maass forms to holomorphic modular forms  \cite[Theorem~3.7]{bruinierfunke04}. To deal with the above generating function, the following simplified version is sufficient.
The reader is referred to Section \ref{sec:sect-maass-lower} for details.

\begin{proposition}[\cite{ehlensankaran}, Theorem~2.14]
	Let $F(z) = \sum_{m \in \Z}c_{F}(m,y)e(mz)$ be a smooth modular function for $\Gamma$ of moderate growth\footnote{Here and throughout, we mean by moderate growth that $F$ and all iterated partial derivatives in $z$ and $\overline{z}$ are $O(y^\ell)$ for some $\ell \in \Z$ ($\ell$ and the implied constant are allowed to depend on the order of the partial derivative) as $y \to \infty$.}, and assume that $c_{F}(0,y) = O(e^{-Cy})$ as $y \to \infty$ for some $C > 0$. Then there exists a unique smooth weight two modular form $F^{\#}(z) = \sum_{m \in \Z}c_{F^{\#}}(m,y)e(mz)$ with at most linear exponential growth at the cusp and $\lim_{y \to \infty}c_{F^{\#}}(m,y) = 0$ for $m < 0$, such that $L_{2}(F^{\#}) = F$. Its Fourier coefficients are given by
	\[
	c_{F^{\#}}(m,y) = \left\langle F_{m}-P_{m,y},\overline{F} \right\rangle,
	\]
	where $\langle \cdot,\cdot \rangle$ is the regularized Petersson inner product defined in \eqref{eq:peterssoninnerproduct}, $F_{m} = j_{m}$ for $m \in\N_0$ is the unique weakly holomorphic modular form for $\Gamma$ with Fourier expansion $e(-mz) + O(e(z)),$ $F_{m} = 0$ for $m \in-\N$, and $P_{m,y}$ is a truncated Poincar\'e series defined in \eqref{eq:cutoffpoincareseries}.
\end{proposition}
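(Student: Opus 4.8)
The plan is to reduce the operator equation $L_{2}(F^{\#}) = F$ to a decoupled family of first-order ordinary differential equations in the Fourier coefficients, to read off uniqueness from the kernel of $L_{2}$, and to produce the solution together with its closed formula by unfolding the truncated Poincar\'e series $P_{m,y}$ against $F$.

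First I would separate variables. Writing $F(z) = \sum_{m}c_{F}(m,y)e(mz)$ and seeking $F^{\#}(z) = \sum_{m}c_{F^{\#}}(m,y)e(mz)$, a direct computation with $L_{2} = -2iy^{2}\partial_{\overline{z}}$ applied to a single mode $c(m,y)e(mz)$ collapses the equation into the simple relations
\[
\partial_{y}\, c_{F^{\#}}(m,y) = \frac{c_{F}(m,y)}{y^{2}}, \qquad m \in \Z .
\]
Each coefficient is therefore an antiderivative of $c_{F}(m,\cdot)/y^{2}$, determined up to a single additive constant, and these constants are exactly the Fourier modes of $\ker L_{2}$, i.e.\ of holomorphic weight-two forms. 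For $m < 0$ the coefficient $c_{F}(m,y)$ decays exponentially --- this is the role of the moderate-growth hypothesis --- so the requirement $\lim_{y\to\infty}c_{F^{\#}}(m,y) = 0$ forces integration from $+\infty$ and fixes the constant to be zero; for $m = 0$ the assumption $c_{F}(0,y) = O(e^{-Cy})$ makes the same integral converge. Uniqueness is then immediate: any two solutions differ by an element of $\ker L_{2}$ meeting the growth and decay conditions, that is, a weakly holomorphic weight-two form whose negative Fourier coefficients vanish, hence a holomorphic form in $M_{2}(\Gamma) = \{0\}$.

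The analytic heart is to establish existence together with the stated formula for the coefficients with $m \geq 0$, and here I expect the main obstacle to be the regularization. I would unfold the regularized pairing $\langle F_{m} - P_{m,y}, \overline{F}\rangle$ over $\Gamma_{\infty}\backslash\Gamma$: since $P_{m,y}$ is a Poincar\'e series whose seed carries a single frequency and is truncated in the height parameter $y$, the unfolded $x$-integration isolates precisely the coefficient $c_{F}(m,t)$, and differentiating in the truncation parameter $y$ reproduces the right-hand side $c_{F}(m,y)/y^{2}$ of the ODE above. The weakly holomorphic $F_{m} = j_{m}$ supplies the $y$-independent constant of integration and, crucially, cancels the exponential growth of $P_{m,y}$ at the cusp, so that the combination $F_{m} - P_{m,y}$ pairs convergently against $F$ (whose constant term decays); the same unfolding with $F_{m} = 0$ recovers, for $m < 0$, the integral from $+\infty$ used above. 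The delicate tasks are to justify interchanging the summation, the integration, and the $y$-derivative, and to track the renormalization of the divergent constant-term contribution inside the regularized pairing; granting this, the identity $c_{F^{\#}}(m,y) = \langle F_{m} - P_{m,y}, \overline{F}\rangle$ follows by matching both the ODE and the behavior as $y \to \infty$.

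Finally I would confirm that the function assembled from these coefficients is a genuine smooth weight-two modular form of the asserted growth. Smoothness and the growth bound follow from uniform estimates on the coefficient integrals furnished by the moderate-growth hypothesis on $F$. Modularity, which is not apparent from the Fourier expansion alone, is inherited from the closed formula, because each coefficient is defined through the $\Gamma$-invariant regularized Petersson pairing; thus $F^{\#}$ transforms with weight two, and $L_{2}(F^{\#}) = F$ holds by construction. This completes both the existence statement and the explicit description of the Fourier coefficients.
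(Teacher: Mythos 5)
Your reduction to the decoupled ODEs $\partial_{y}c_{F^{\#}}(m,y)=y^{-2}c_{F}(m,y)$, your uniqueness argument via $M_{2}(\Gamma)=\{0\}$, and your unfolding of the truncated Poincar\'e series all agree with the mechanics of the paper's proof (Proposition \ref{prop:Lsharpint}). The genuine gap is in the existence part, specifically modularity. Solving the ODEs mode by mode (with integration constants fixed by decay for $m\leq 0$ and by $\langle j_{m},\overline{F}\rangle$ for $m>0$) produces a smooth function satisfying $L_{2}(F^{\#})=F$ with the right Fourier expansion, but nothing in this construction makes $F^{\#}$ transform with weight two under $\Gamma$. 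Your justification --- that modularity is ``inherited from the closed formula because the pairing is $\Gamma$-invariant'' --- does not work: the $\Gamma$-invariance in the regularized pairing concerns the integration variable running over a fundamental domain, whereas the external variable $z$ enters each coefficient only through the discrete index $m$ and the truncation height $y$; invariance of each individual coefficient says nothing about how the assembled series $\sum_{m}c_{F^{\#}}(m,y)e(mz)$ behaves under $z\mapsto \gamma z$, which mixes all Fourier modes. Concretely, if you define $F^{\#}$ by your recipe and set $G_{\gamma}:=F^{\#}|_{2}\gamma-F^{\#}$, then $L_{2}(G_{\gamma})=F|_{0}\gamma-F=0$, so each $G_{\gamma}$ is holomorphic on $\H$, but it is merely a cocycle with no growth control at the cusp, and one cannot conclude $G_{\gamma}=0$. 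Whether the integration constants can be chosen so that all $G_{\gamma}$ vanish is exactly the nontrivial content of the existence statement; this is why the paper does not argue as you do, but instead imports the existence of a \emph{modular} $L_{2}$-preimage in the space $\Aexp{2}$ from \cite{ehlensankaran} (Proposition 2.12 there) as a black box, and only afterwards computes the Fourier coefficients of that given modular object by Stokes' theorem (for the $F_{m}$-part, using orthogonality to cusp forms) and unfolding (for the $P_{m,y}$-part).

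A secondary issue: the convergence and smoothness of the assembled series, and hence your final growth claim, require estimates on the constants $\langle j_{m},\overline{F}\rangle$ as $m\to\infty$ (i.e., on the growth of the family $j_{m}$), not merely the moderate-growth hypothesis on $F$; you flag the interchange of sum, integral, and $y$-derivative as delicate, but this quantitative input is absent. Still, the essential missing idea is the modularity of the mode-by-mode construction; without it, your argument establishes uniqueness and the shape of the coefficients of a preimage, assuming one exists, but not existence itself.
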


By Proposition 1.3, for fixed $\tau$, the function
\[
\Theta_{KM}^{\#}(\tau,z) := \sum_{m \in \Z}\left\langle F_{m}-P_{m,y},\overline{\Theta_{KM}(\tau,z)} \right\rangle e(mz)
\]
is a smooth weight two modular form in $z$ for $\Gamma$ which maps to $\Theta_{KM}(\tau,z)$ under $L_{2,z}$. For $m \in\N_0$ the inner product with $F_{m} = j_{m}$ is the Kudla-Millson theta lift studied in \cite{bruinierfunke06}. By \cite[Theorems~1.1 and 1.2]{bruinierfunke06}, we have
\[
\left\langle F_{m},\overline{\Theta_{KM}(\tau,z)} \right\rangle =
\begin{cases}
  2\mathcal{H}(\tau) & \text{if } m = 0, \\
 -2\sum_{n \mid m}ng_{n^{2}}(\tau) & \text{if } m > 0.
 \end{cases}
\]
The integral involving the truncated Poincar\'e series can be computed by unfolding against $P_{m,y}$. In this way we obtain the Fourier expansion in $z$. The modularity of $\Theta_{KM}(\tau,z)$ in $\tau$ implies that $\tau\mapsto\Theta_{KM}^{\#}(\tau,z)$ transforms like a modular form of weight $\frac{3}{2}$. The series converges locally uniformly and defines a smooth function on $\H \times \H$ (which is in fact quite difficult to prove). It can be differentiated termwise with respect to $\tau$, which then easily implies the relation \eqref{eq L32 ThetaKM introduction}. Equivalently stated, \eqref{eq L32 ThetaKM introduction} says that $\Theta_{KM}^{\#}(\tau,z)$ maps to a multiple of $R_{0,z}(\Theta_{S}(\tau,z))$ under $L_{\frac{3}{2}}$. Applying the same technique to construct a $L_{\frac{3}{2}}$-preimage of $R_{0,z}(\Theta_{S}(\tau,z))$ with respect to $\tau$, we obtain Theorem~\ref{introthm:Fdgenser}, and from the uniqueness we see that this preimage agrees up to a constant factor with $\Theta_{KM}^{\#}(\tau,z)$.

\subsection{Higher weight}
\label{sec:higher-weight}
While we focus on the generating functions in low weights in this paper,
we also consider lowering preimages of the Shintani and Millson theta functions
in higher weight. Our interest in these functions is twofold.
First of all, we obtain again completions of the formal generating functions
of very natural families of modular forms.
In particular, the construction yields completions of the generating functions
of the modular forms $f_{k+1,D,d}$ (defined above for $D=1$ and $d<0$),
which are holomorphic cusp forms for $d>0$ (and a holomorphic Eisenstein series for $d=0$)
but meromorphic modular forms for $d<0$.

Second, the modular completions we consider in higher weight feature an interesting phenomenon:
it really becomes clear that the preimages under the lowering operator that we obtain with the method of this paper should in fact be seen as a very close relative of theta functions build from Schwartz functions.
Namely, it is possible to write them as theta functions coming from a ``degenerate''
Schwartz function. We refer to Section \ref{sec:higherweight} for details.

To state one of our results in this direction,
let $k$ be a positive integer and let $D$ be a fundamental discriminant with $(-1)^{k}D < 0$.
For all $D$ and $d$, the function
\[
  f_{k+1,d,D}(z) := \sum_{Q \in \mathcal{Q}_{d|D|}\setminus \{0\}}\frac{\chi_{D}(Q)}{Q(z,1)^{k+1}}
\]
has weight $2k+2$, where $\chi_{D}$ denotes the usual genus character (see Section \ref{sec:theta}).
Define the series
\begin{align*}
B_k^*(\tau,z) &:= k!\frac{|D|^{\frac{k+1}{2}}}{\pi^{k+1}}\sum_{\substack{d \in \Z \\ (-1)^{k}d \equiv 0,3 \pmod{4}}}\left(\widetilde{f}_{k+1,-d,D}(v,z)-g_{k+1,d,D}(v,z)\right)e\left(d\tau\right)
\end{align*}
where
\begin{align*}
\widetilde{f}_{k+1,d,D}(v,z) &:= f_{k+1,d,D}(z)\begin{dcases}
1 & \text{if $d \leq 0$,} \\
\frac{\Gamma\left(k+\frac12,4\pi |d| v\right)}{\Gamma\left(k+\frac12\right)} & \text{if $d > 0$,}
\end{dcases}\\
g_{k+1,d,D}(v,z) &:= \frac{1}{k!}\sum_{Q \in \calQ_{-d|D|}\setminus\{0\}}\frac{\chi_{D}(Q)}{Q(z,1)^{k+1}}\Gamma\left(k+1,4\pi v \frac{|Q(z,1)|^{2}}{y^{2}|D|} \right).
% I replaces R(X,z) by |Q(z,1)|^{2}/2y^{2}
\end{align*}
Using the same method as above, we obtain the following result.
\begin{theorem}\label{thm:heigherweightintroduction}
We have that $B^*(\tau,z)$ converges to a smooth function on $\H \times \H$ and is modular of weight $2k+2$ in $z$ for $\Gamma$ and modular of weight $-k + \frac{1}{2}$ in $\tau$ for $\Gamma_0(4)$.
\end{theorem}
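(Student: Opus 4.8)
The plan is to follow verbatim the strategy used for Theorem~\ref{introthm:Fdgenser}, replacing the Kudla--Millson and Siegel theta functions by their higher-weight analogues, the Millson theta function $\Theta_{M}(\tau,z)$ and the Shintani theta function $\Theta_{Sh}(\tau,z)$. The starting point is that, for fixed $\tau$, the kernel $\Theta_{M}(\tau,\cdot)$ is a smooth modular form of moderate growth and weight $2k$ in $z$, and that it transforms with weight $-k+\frac{1}{2}$ in $\tau$ for $\Gamma_{0}(4)$ (both facts are established in the construction of $\Theta_{M}$ itself and are here assumed known). One then builds a distinguished preimage of $\Theta_{M}$ under the lowering operator in the variable $z$ and identifies it, via its Fourier expansion in $z$, with $B^{*}(\tau,z)$.

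First I would apply the general weight version of the Ehlen--Sankaran preimage construction \cite{ehlensankaran} in the $z$-variable. For each fixed $\tau$ this produces a smooth weight $2k+2$ modular form $\Theta_{M}^{\#}(\tau,z)$ in $z$, with controlled growth at the cusp, satisfying $L_{2k+2,z}(\Theta_{M}^{\#}) = \Theta_{M}$, and whose Fourier coefficients in $z$ are regularized Petersson inner products of $\Theta_{M}(\tau,\cdot)$ against weakly holomorphic forms $F_{m}$ and truncated Poincaré series $P_{m,y}$. The inner products with the $F_{m}$ are precisely the higher-weight Millson theta lifts, whose evaluation — the analogue of \cite[Theorems~1.1 and 1.2]{bruinierfunke06} together with the cycle-integral computations of \cite{dit} — yields the traces of the meromorphic forms $f_{k+1,d,D}$ (with a holomorphic Eisenstein contribution in the boundary term $d=0$). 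Unfolding the integral against $P_{m,y}$ then produces the completion terms $g_{k+1,d,D}(v,z)$ and the incomplete-Gamma factors appearing in $\widetilde{f}_{k+1,d,D}(v,z)$. Matching the resulting expansion against the defining series identifies $\Theta_{M}^{\#}(\tau,z)$ with $B^{*}(\tau,z)$ up to the explicit normalizing constant $k!\,|D|^{\frac{k+1}{2}}/\pi^{k+1}$, which simultaneously establishes weight $2k+2$ modularity in $z$ and gives the Fourier expansion.

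Next I would establish modularity in $\tau$. Since the Ehlen--Sankaran preimage is uniquely determined by its growth conditions, it is linear in its input; for $\gamma \in \Gamma_{0}(4)$ the kernel $\Theta_{M}(\gamma\tau,\cdot)$ differs from $\Theta_{M}(\tau,\cdot)$ only by the weight $-k+\frac{1}{2}$ automorphy factor in $\tau$, which is a nonzero constant in $z$ and hence preserves the growth conditions. By uniqueness this factor pulls through the preimage, so $\Theta_{M}^{\#}$, and therefore $B^{*}$, inherits weight $-k+\frac{1}{2}$ modularity in $\tau$. Equivalently, and as a useful cross-check mirroring the low-weight case, one shows that $B^{*}$ is itself the distinguished $L_{-k+\frac{1}{2}}$-preimage in $\tau$ of a suitable raising in $z$ of the Shintani theta function $\Theta_{Sh}$; uniqueness then forces the two descriptions to coincide and delivers modularity in both variables at once.

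The main obstacle is the convergence and smoothness of $B^{*}(\tau,z)$ on $\H \times \H$, exactly the step flagged as ``quite difficult to prove'' already for weight two. The individual summands are badly behaved: for $d<0$ the form $f_{k+1,-d,D}$ is genuinely meromorphic with poles at the CM points of discriminant $-d|D|$, while the completion term $g_{k+1,d,D}$ carries matching singularities, so I would first show that the difference $\widetilde{f}_{k+1,-d,D}(v,z) - g_{k+1,d,D}(v,z)$ extends to a smooth function across these CM points by cancellation of polar parts, defining its value there by the limit. I would then prove local uniform convergence of the $d$-sum by bounding the coefficients $\langle F_{m}-P_{m,y},\overline{\Theta_{M}(\tau,\cdot)}\rangle$ uniformly in $\tau$, exploiting the exponential tail decay of the incomplete Gamma functions $\Gamma(k+\tfrac12,4\pi|d|v)$ and $\Gamma(k+1,\cdot)$ against polynomial growth bounds for the traces of the $f_{k+1,d,D}$; smoothness then follows from the uniform convergence of all termwise derivatives in $\tau$ and $z$.
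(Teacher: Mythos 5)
Your proposal has a structural confusion that breaks the central identification step. The defining series of $B_k^{*}(\tau,z)$ is a Fourier expansion in $\tau$: the coefficient of $e(d\tau)$ is $\widetilde{f}_{k+1,-d,D}(v,z)-g_{k+1,d,D}(v,z)$, a function of $z$ and $v$. Your construction, however, produces the Fourier expansion in $z$ of a lowering preimage of the Millson theta function: its $m$-th coefficient $\bil{F_{m}-P_{m,y}}{\Theta_{M,k,D}(\tau,\cdot)}$ is a function of $\tau$ (a higher-weight Millson theta lift of $F_{m}$, i.e.\ a weight $-k+\frac12$ object analogous to the $f_{d}$ and $\widetilde{f}_{d}$ of Theorem \ref{thm:LpreimMillson}), and is \emph{not} any of the terms $f_{k+1,d,D}(z)$, $g_{k+1,d,D}(v,z)$ appearing in $B_k^{*}$. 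So ``matching the resulting expansion against the defining series'' compares expansions in two different variables and cannot be carried out; in particular, your claim that unfolding against $P_{m,y}$ produces the $g_{k+1,d,D}$ and the incomplete Gamma factors is incorrect --- those arise from pairings in the $\tau$-variable against $\mathcal{P}_{d,v}$ and $\mathcal{F}_{d}$. (There is also a sign slip: $f_{k+1,-d,D}$ is meromorphic with CM-point poles for $d>0$ and a cusp form for $d<0$, so the singularity cancellation you describe concerns the coefficients with $d>0$.)

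The paper instead constructs the preimage on the $\tau$-side: by Proposition \ref{prop:Lsharphalfint}, a distinguished $L_{-k+\frac12,\tau}$-preimage of the Shintani theta function, whose $\tau$-Fourier coefficients, computed by unfolding against $\mathcal{F}_{d}$ and $\mathcal{P}_{d,v}$, are exactly the coefficients of $B_k^{*}$; no raising in $z$ is needed since $\Theta_{Sh,k,D}$ already has weight $2k+2$ in $z$ and $-k-\frac32$ in $\tau$. The crucial point your proposal misses entirely is that Proposition \ref{prop:Lsharphalfint} requires the input to be orthogonal to cusp forms under the bilinear pairing, and $\Theta_{Sh,k,D}$ is not: $\bil{\mathcal{F}_{d}}{\Theta_{Sh,k,D}(\cdot,z)}$ is a nonzero multiple of the cusp form $f_{k+1,-d,D}$ for $d<0$. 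One must therefore first subtract the cuspidal projection $\pi_{\mathrm{bil}}(\Theta_{Sh,k,D}(\cdot,z))$ and apply the machinery to $\Theta_{Sh,k,D,\perp}$; this projection is not a technicality, since it is precisely what produces the factors $\Gamma\left(k+\frac12,4\pi|d|v\right)/\Gamma\left(k+\frac12\right)$ on the cuspidal terms of $B_k^{*}$. This step has no analogue in the weight-two case (where the relevant spaces of cusp forms vanish), so it cannot be obtained by following the low-weight argument ``verbatim''. Finally, your emphasis on convergence is misplaced for this theorem: in higher weight the series converge easily (the hard analytic estimates of the appendix are only needed for weight $\frac32$), whereas the genuinely new content is the projection and the $\tau$-side unfolding you omit.
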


For the proof we refer to Section \ref{sec:heigherweightshintani}.

\subsection{How this article is organized}
In Section \ref{sec:prelims}, we set up the notation for the rest of the paper and
recall important facts from the theory of harmonic Maass forms.
Section \ref{sec:sect-maass-lower} is then concerned with the $L_k$-preimages;
the results of this section are essentially contained in \cite{ehlensankaran}.
However, since we need special cases and also slightly stronger versions
for our very explicit results, we nevertheless provide a lot of details and proofs.
The four theta functions that we use are all introduced in Section \ref{sec:theta}.
In Sections \ref{sec:KM and Siegel preimage}, \ref{sec:L2zpreimages Millson and Shintani wt 0},
and \ref{sec:higherweight}, we construct the preimages of all theta functions and study their
analytic behaviour in detail.
The appendix contains growth estimates for families of weakly holomorphic modular forms,
which are needed in order to prove normal convergence of the completed generating function.
These results might be of independent interest but are also quite technical and
only included because, to our surprise, we could not find similar results in the literature.
We only provide details in weight $\tfrac32$, which is the most delicate regarding convergence,
but our arguments generalize to other weights.

\section*{Acknowledgements}
Using different techniques, special cases of the completions for higher weight were also obtained by the first author and Kane and Rolen in unpublished notes.

The research of the first author is supported by the Deutsche Forschungsgemeinschaft (DFG) Grant No. BR 4082/3-1. 
The third author was partially supported by DFG Grant No. BR-2163/4-1 and the LOEWE research unit USAG.

\section{Preliminaries on vector-valued harmonic Maass forms for the Weil representation}
\label{sec:prelims}
Consider the even lattice
\[
L := \left\{ \begin{pmatrix}-b & -c \\ a & b \end{pmatrix} : a,b,c \in \Z\right\}
\]
with the quadratic form $Q(\lambda) := \det(\lambda)$ and bilinear form $(\lambda,\nu) := -\tr(\lambda \nu)$. It has signature $(1,2)$ and level four, and its dual lattice equals
\[
L' = \left\{ \begin{pmatrix} -\frac{b}{2} & -c \\ a & \frac{b}{2} \end{pmatrix}: a,b,c \in \Z\right\}.
\]
Hence, $L'/L \cong \Z/2\Z$. The modular group $\Gamma$ acts on $L'$ and $L$ by conjugation $\gamma.\lambda := \gamma \lambda\gamma^{-1}$, and fixes the classes of $L'/L$. For a discriminant $D \in \Z$ let
\[
L_{-\frac{D}{4}} := \left\{\lambda \in L': Q(\lambda) = -\frac{D}{4}\right\}.
\]
Note that an element $\lambda\in L_{-\frac{D}4}$ corresponds to a binary quadratic form $[a,b,c]$ of discriminant $D$, and this identification is compatible with the actions of $\SL_{2}(\Z)$ on $L_{-\frac{D}4}$ and $\calQ_{D}$.

We let $\Mp_{2}(\R)$ be the metaplectic group, realized as the set of pairs $\widetilde{\gamma}=(\gamma,\phi)$ with $\gamma = \left(\begin{smallmatrix}a & b \\ c & d \end{smallmatrix}\right) \in \SL_{2}(\R)$ and $\phi: \H \to \C$ a holomorphic function with $\phi^{2}(\tau) = c\tau + d$. The group $\widetilde{\Gamma} := \Mp_{2}(\Z)$ is generated by the elements $\widetilde{T} := \left(\left(\begin{smallmatrix}1 & 1 \\ 0 & 1 \end{smallmatrix}\right),1\right)$ and $\widetilde{S} := \left( \left(\begin{smallmatrix}0 & -1 \\ 1 & 0 \end{smallmatrix}\right),\sqrt{\tau}\right)$. Let $\widetilde{\Gamma}_{\infty}$ be the subgroup of $\widetilde{\Gamma}$ generated by $\widetilde{T}$.

We let $\e_\mu$ with $\mu \in L'/L$ be the standard basis vectors of the group algebra $\C[L'/L]$. 
We frequently identify $L'/L$ with $\Z/2\Z$
and for $n \in \Z$ we use the notation $\e_n$
to denote $\e_{n \pmod {2}}$, where $\e_0$ corresponds to $\e_{0+L}$
and $\e_{1}$ corresponds to $\e_{\gamma + L}$ with $\gamma \in L' \setminus L$.
The group algebra is equipped with the natural inner product $\langle \e_{\mu},\e_{\nu}\rangle = \delta_{\mu,\nu}$ which is antilinear in the second variable. Furthermore, let $\varrho_{L}$ denote the associated \emph{Weil representation} of $\widetilde{\Gamma}$ which is defined by
\begin{align*}
 \varrho_{L}\bigs(\widetilde{T}\bigs) \e_{\mu} := e(Q(\mu)) \e_{\mu}, \qquad
 \varrho_{L}\bigs(\widetilde{S}\bigs) \e_{\mu} := \frac{\sqrt{i}}{\sqrt{|L'/L|}}\sum_{\nu \in L'/L} e(-(\mu,\nu)) \e_{\nu}.
\end{align*}
Moreover $\overline{\varrho}_{L}$ denotes the complex conjugate representation, which corresponds to the Weil representation attached to the lattice given by $L$ with the negative of the quadratic form.

For $k \in \frac{1}{2}+\Z$, define the \emph{weight $k$ slash operator} of $\Mp_{2}(\Z)$ on functions $f: \H \to \C[L'/L]$ by
\[
  f|_{k,\varrho_{L}}(\gamma,\phi)(\tau) := \phi^{-2k}(\tau)\varrho_{L}(\gamma,\phi)^{-1}f(\gamma \tau).
\]
The \emph{weight $k$ Laplace operator}
\begin{align}\label{eq Laplace operator}
\Delta_{k}:=-v^2\left(\frac{\partial^2}{\partial u^2}+\frac{\partial^2}{\partial v^2}\right)
+ikv\left(\frac{\partial}{\partial u}+i\frac{\partial}{\partial v}\right)
\end{align}
acts component-wise on smooth functions $f: \H \to \C[L'/L]$ and commutes with the weight $k$ slash-action.

We recall the definition of harmonic Maass forms from  \cite{bruinierfunke04}.

\begin{definition}
	A \emph{harmonic Maass form} of weight $k \in \frac{1}{2}+\Z$ for $\varrho_{L}$ is a twice continuously differentiable function $f: \H \to \C[L'/L]$ which satisfies the following conditions:
	\begin{enumerate}
		\item $\Delta_{k}(f) = 0$;
		\item $f|_{k,\varrho_{L}}(\gamma,\phi) = f$ for every $(\gamma,\phi) \in \widetilde{\Gamma}$;
		\item $f(\tau) = O(e^{Cv})$ as $v \to \infty$ for some constant $C > 0$, uniformly in $u$.
	\end{enumerate}
\end{definition}
	We denote the space of all harmonic Maass forms by $H_{k,\varrho_{L}}^!$. Furthermore, we let $H_{k,\varrho_{L}}$ be the subspace of harmonic Maass forms for which there exists a Fourier polynomial
	\[
	P_{f}(\tau) =\sum_{-\infty \ll n\leq 0}c_{f}^{+}(n)e(n\tau)
	\]
	with coefficients $c_f^+(n)\in\C[L'/L]$ such that $f(\tau)-P_{f}(\tau) = O(e^{-cv})$ as $v \to \infty$ for some $c > 0$, uniformly in $u$. The function $P_{f}$ is called the \emph{principal part} of $f$. The subspaces of weakly holomorphic modular forms (meromorphic modular forms which are holomorphic on $\H$), holomorphic modular forms and cusp forms are denoted by $M_{k,\varrho_{L}}^{!},M_{k,\varrho_{L}}$, and $S_{k,\varrho_{L}}$, respectively. Harmonic Maass forms of half-integral weight for $\overline{\varrho}_{L}$ and of integral weight for $\Gamma$ are defined analogously, and the corresponding spaces are denoted by $H_{k,\overline{\varrho}_{L}}^!$ (for $k \in \frac{1}{2}+\Z$) and $H_{k, \varrho_L}^!$ (for $k \in \Z$), respectively.

An element $f \in H_{k,\varrho_L}^!$ has a Fourier expansion of the shape
\begin{equation} \label{deff}
  f(\tau) = \sum_{n \in \Q} c_f(n, v) e(n\tau),
\end{equation}
with $c_f(n,v) \in \C[L'/L]$. The right hand side of \eqref{deff} decomposes into a \emph{holomorphic part} $f^+$ and a \emph{non-holomorphic part} $f^-$
as follows, which are for $k \neq 1$ given by
\begin{align*}
  f^+(\tau)&= \sum_{\substack{n\in \Q\\ n\gg-\infty}} c_{f}^+(n) e(n\tau),\qquad f^-(\tau) = c_{f}^-(0)v^{1-k} + \sum_{\substack{n\in \Q \setminus\{0\}
\\ n \ll \infty}} c_{f}^-(n) W_k(2\pi n v) e(n\tau) , \end{align*}
with coefficients $c_{f}^{+}(n),c_{f}^{-}(n) \in \C[L'/L]$. Here, following \cite{BDE} and  \cite{bruinierfunke04} for $x\in\mathbb{R}$, we set
\begin{equation*}
\label{defineW}
W_k(x):= (-2x)^{1-k} \Re(E_k(-2x))
\end{equation*}
with $E_k$ the \emph{generalized exponential integral}
(see  \cite{NIST:DLMF}, 8.19.3) defined by
\[
  E_r(z) := \int_1^{\infty} e^{-zt}t^{-r}\, dt.
\]
This function is related to the incomplete Gamma function via (8.19.1) of  \cite{NIST:DLMF} by $\Gamma(r, z) = z^r E_{1-r}(z)$. For $k = 1$, one has to replace $v^{1-k}$ by $\log(v)$ in the non-holomorphic part $f^{-}$ of $f$. Note that $f \in H_{k,\varrho_L}$ is equivalent to $c_f^-(n) = 0$ for all $n < 0$.

The \emph{Maass lowering operator} and the \emph{Maass raising operator} 

\begin{equation*}
  L_{k} := -2iv^{2}\frac{\partial}{\partial \overline{\tau}}, \qquad R_{k} := 2i\frac{\partial}{\partial \tau} +\frac{k}{v},
\end{equation*}
lower or raise the weight of an automorphic form of weight $k$ by $2$. The \emph{$\xi_k$-operator}
\[
\xi_{k}(f(\tau)) := v^{k-2}\overline{L_{k}(f(\tau))} = R_{-k}\left(v^{k}\overline{F(\tau)}\right) = 2iv^{k}\overline{\frac{\partial}{\partial \overline{\tau}}f(\tau)}
\]
defines surjective maps $H_{k,\varrho_{L}}^! \to M_{2-k,\overline{\varrho}_{L}}^{!}$ and $H_{k,\varrho_{L}} \to S_{2-k,\overline{\varrho}_{L}}$. The raising and lowering operators are related to the Laplace operator by
\begin{align}\label{eq laplace via lowering raising}
-\Delta_{k} = \xi_{2-k}\circ \xi_{k} = L_{k+2}\circ R_{k}+k = R_{k-2} \circ L_{k}.
\end{align}

\begin{remark}\label{rem:vectorscalarvaluedisomorphism}
	The action $\varrho_{L}(Z)\e_{h} = i\e_{-h}$ of $Z := \widetilde{S}^{2}$ in the Weil representation implies that the components $f_{h}$ of $f = \sum_{h \in L'/L}f_{h}\e_{h} \in H_{k,\varrho_{L}}^!$ satisfy the symmetry relation $f_{-h} = (-1)^{k+\frac12}f_{h}$. We obtain that $H_{k,\varrho_{L}}^! = \{0\}$ if $k+\frac12$ is odd and that $H_{k,\overline{\varrho}_{L}}^! = \{0\}$ if $k+\frac12$ is even. Denote by $H_{k}^!(4)$ the space of scalar-valued harmonic Maass forms $f(\tau) = \sum_{n \in \Z}c_{f}(n,v)e(n\tau)$ of weight $k$ for $\Gamma_{0}(4)$ satisfying the \emph{Kohnen plus space condition} $c_f(n,v)=0$ unless $ (-1)^{k-\frac12}n \equiv 0,1 \pmod 4$. Then by  \cite[Theorem 5.4]{eichlerzagier} the map
	\[
	f_{0}(\tau)\e_{0} + f_{1}(\tau)\e_{1} \mapsto f_{0}(4\tau) + f_{1}(4\tau)
	\]
	defines an isomorphism $H_{k,\varrho_{L}}^! \cong H_{k}^!(4)$ if $k+\frac{1}{2}$ is even, and $H_{k,\overline{\varrho}_{L}}^! \cong H_{k}^!(4)$ if $k+\frac{1}{2}$ is odd. Throughout this work, we switch freely between the vector-valued and the scalar-valued viewpoint without further notice. In particular, we use the same symbol for a vector-valued harmonic Maass form and its scalar-valued version.
\end{remark}

\begin{example}
We collect some examples of harmonic Maass forms and modular forms that are used below.
  \begin{enumerate}
              \item Zagier's non-holomorphic Eisenstein series
	\begin{align}\label{eq:H}
   \mathcal H(\tau) := \sum_{n=0}^{\infty}H(n)e(n\tau) + \frac{1}{4\sqrt{\pi}}\sum_{n=1}^\infty n \Gamma\left(-\frac12, 4\pi n^2v\right)e(-n^2\tau)+\frac{1}{8\pi \sqrt{v}},
	\end{align}
	with
	\[
	H(0) := -\frac{1}{12}, \quad  H(d) := \sum_{Q \in \Gamma \setminus \calQ_{-d}}\frac{1}{\omega_{Q}}
	\]
	is a harmonic Maass form in $H_{\frac{3}{2}}^!(4) \cong H_{\frac{3}{2},\varrho_{L}}^!$ (see  \cite{zagiereisenstein}). It is related to the Jacobi theta function $\theta(\tau) := \sum_{n \in \Z}e(n^{2}\tau) \in M_{\frac{1}{2}}(4) \cong M_{\frac{1}{2},\overline{\varrho}_{L}}$ by $\xi_{\frac{3}{2}}(\mathcal{H}) = -\frac{1}{16\pi}\theta$.
      \item For each negative discriminant $-d < 0$, there exists a unique weakly holomorphic modular form $f_{d} \in M_{\frac12}^{!}(4)$ having a Fourier expansion of the shape
	\begin{align*}
	f_{d}(\tau) = e(-d\tau) + \sum_{\substack{D > 0 \\ D \equiv 0,1 \pmod{4}}}A(D,d)e(D\tau).
	\end{align*}
	Similarly, for each positive discriminant $D > 0$ there is a unique weakly holomorphic modular form $g_{D} \in M_{\frac32}^{!}(4)$ with
	\begin{align}\label{eq:gD}
	g_{D}(\tau) = e(-D\tau) + \sum_{\substack{d \geq 0 \\ d \equiv 0,3 \pmod{4}}}B(D,d)e(d\tau).
	\end{align}
	Here $B(D,0) = -2$ if $D$ is a square, and $B(D,0) = 0$ otherwise. If we define $f_{0} := \theta$, then the sets $\{f_{d}\}$ and $\{g_{D}\}$ form bases of $M_{\frac12}^{!}(4)$ and $M_{\frac32}^{!}(4)$, respectively. The coefficients satisfy the \emph{Zagier duality} $A(D,d) = -B(D,d)$ and can be expressed in terms of twisted traces of CM values of the modular $j$-function (see  \cite{zagiertraces}).
      \item The non-holomorphic Eisenstein series ($\sigma_j(n):=\sum_{d|n}d^j$ is the $j$-th divisor sum)
	\begin{align}\label{eq:E2}
	E_{2}^{*}(z) := -\frac{3}{\pi y} + 1 -24\sum_{n=1}^{\infty}\sigma_{1}(n)e(nz)
	\end{align}
	is a harmonic Maass form of weight two for $\Gamma$. It satisfies $\xi_{2}(E_{2}^{*}) = \frac{3}{\pi}$.
  \end{enumerate}
\end{example}

\section{Normalized $L_k$-preimages}
\label{sec:sect-maass-lower}
In this section, we recall a special case of a result from \cite{ehlensankaran}, which produces a
distinguished preimage $F \in L_{k+2}^{-1}(f)$ of an automorphic form $f$ of weight $k$ under the $L_{k+2}$-operator.
We formulate this result for scalar-valued integral
weight modular forms on the full modular group $\Gamma = \SL_2(\Z)$
and for half-integral weight vector-valued modular forms for the Weil representation $\varrho_L$ (or $\overline{\varrho}_L$).
To ease the notation, we deal with the two cases separately.

\subsection{Integral weight}
\label{sec:integral-weight}
For $k \in \Z$, we define a family of harmonic Maass forms $F_m \in H_{k}$ ($m \in \Z$). For $m > 0$, we let $F_m \in H_{k}$ be a harmonic Maass form with principal part $e(-mz) + c_{F_{m}}^{+}(0)$ for some constant $c_{F_{m}}^{+}(0) \in \C$, which is unique up to addition of holomorphic modular forms. If $M_{k} \neq \{0\}$, then we let $F_{0} = 1$ if $k = 0$ and $F_{0} = E_{k}$ (the normalized Eisenstein series) if $k \neq 0$, and we additionally require that $c_{F_{m}}^{+}(0) = 0$ for all $m > 0$. If $M_{k} = \{0\}$, then we set $F_{m} = 0$ for $m \leq 0$.

\begin{example}
	For $k = 0$ and $m \geq 0$ the function $F_{m}$ is the unique weakly holomorphic modular function whose Fourier expansion has the form $e(-mz)+O(e(z))$. It is usually denoted by $j_{m}$ and it is a polynomial in the modular $j$-function, e.g., $j_{0} = 1, j_{1} = j-744$.
\end{example}

Furthermore, for $m \in \Z$ and $w \in \R^+$, we define the truncated Poincar\'e series
\begin{align}\label{eq:cutoffpoincareseries}
  P_{m,w}(z) := \frac{1}{2}\sum_{\gamma \in \Gamma_{\infty}\backslash \Gamma}\big(\sigma_{w}(y)e(-mz)\big)\big|_{k} \gamma, \quad \text{ where }\sigma_{w}(y) := \begin{cases}1 &\quad \textnormal{if } y \geq w, \\ 0 &\quad \textnormal{if } y < w, \end{cases}
\end{align}
with $|_{k}$ the usual weight $k$ slash operator, and $\Gamma_{\infty} := \left\{\left(\begin{smallmatrix}1 & n \\ 0 & 1 \end{smallmatrix} \right): n \in \Z\right\}$.

We introduce some more notation.
\begin{definition}\label{dfn:Amod}
The space $\Amod{k}$ consists of all smooth functions $F: \H \to \C$ satisfying the following conditions:
	\begin{enumerate}
		\item $F\vert_{k} \gamma = F$ for all $\gamma \in \Gamma$;
		\item $\frac{\partial^{\alpha}}{\partial z^{\alpha}}\frac{\partial^{\beta}}{\partial \overline{z}^{\beta}}F(z) = O(y^{\ell_{\alpha + \beta}})$ for some $\ell_{\alpha+\beta} \in \N_0$ as $y \to \infty$, uniformly in $x$, for all $\alpha,\beta \in \N_{0}$.
		\item If $F(z) = \sum_{m \in \Z}c_{F}(m,y)e(mz)$ denotes the Fourier expansion of $F$, then the integral
\begin{align*}
  \label{eq:0term}
  \int_{1}^{\infty}c_{F}(0,t)t^{-2-s}dy,
\end{align*}
which converges for $\Re(s) \gg 0$ large enough, has a meromorphic continuation to a half-plane $\Re(s) > -\varepsilon$ for some
$\varepsilon > 0$.
	\end{enumerate}
\end{definition}

\begin{remark}
	The integral in Definition \ref{dfn:Amod} (iii) converges for $\Re(s) \gg 0$ since $F$ is of polynomial growth as $y \to \infty$. The condition that it has a meromorphic continuation to $s = 0$ ensures that all of the regularized Petersson inner products (see \eqref{eq:peterssoninnerproduct} below) appearing in the following are well-defined. Also note that we modify the definition of the space $\Amod{k}$ slightly in comparison to \cite{ehlensankaran}.
\end{remark}

Next we define a space of smooth modular forms in which the $L_{k+2}$-preimages of functions in $\Amod{k}$ live.

\begin{definition}
The space $\Aexp{k+2}$ consists of all smooth functions $G: \H \to \C$ such that
	\begin{enumerate}
		\item $G\vert_{k+2} \gamma = G$ for all $\gamma \in \Gamma$;
		\item $G(z) = O(e^{Cy})$ as $y \to \infty$ for some constant $C>0$, uniformly in $x$;
		\item $L_{k+2}(G) \in \Amod{k}$.
	\end{enumerate}
	\end{definition}

We next describe some basic properties of the Fourier coefficients of functions in $\Aexp{k+2}$.

\begin{lemma}\label{lemma growth of coefficients}
Let $G(z) = \sum_{m \in \Z}c_{G}(m,y)e(mz) \in \Aexp{k+2}$. For $m \in-\N$ the limit
\[
  \kappa_G(m):=\lim_{y \to \infty} c_G(m,y)
\]
exists and vanishes for all but finitely many $m \in-\N$. For $m \in\N$, we have the estimate
\[
c_G(m,y) = O\left(y^{\ell}e^{2\pi m y}\right)
\]
as $y \to \infty$, for some $\ell \in \N_0$.
\end{lemma}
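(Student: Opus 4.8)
The plan is to reduce the statement to a first-order ordinary differential equation in $y$ for each Fourier coefficient $c_G(m,y)$, obtained by comparing the Fourier expansions of $G$ and of $F := L_{k+2}(G) \in \Amod{k}$. First I would record how $L_{k+2} = -2iy^2\frac{\partial}{\partial\overline{z}}$ acts on a single Fourier term: since $e(mz)$ is holomorphic and $c_G(m,y)$ depends only on $y$, one computes $\frac{\partial}{\partial\overline{z}}\bigl(c_G(m,y)e(mz)\bigr) = \frac{i}{2}\frac{\partial}{\partial y}c_G(m,y)\,e(mz)$, so that the $m$-th Fourier coefficient of $F$ equals $c_F(m,y) = y^2\frac{\partial}{\partial y}c_G(m,y)$. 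The smoothness of $G$ justifies extracting Fourier coefficients by integrating against $e^{-2\pi i mx}$ over $x \in [0,1]$ and differentiating under the integral sign.

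Next I would convert the growth hypotheses into coefficient bounds. Because $F \in \Amod{k}$ is of polynomial growth, $F = O(y^{\ell_0})$ uniformly in $x$ (condition (ii) with $\alpha=\beta=0$), the identity $c_F(m,y) = e^{2\pi m y}\int_0^1 F(x+iy)e^{-2\pi i mx}\,dx$ yields $c_F(m,y) = O(y^{\ell_0}e^{2\pi m y})$, whence $\frac{\partial}{\partial y}c_G(m,y) = y^{-2}c_F(m,y) = O(y^{\ell_0-2}e^{2\pi m y})$. For $m < 0$ the right-hand side decays exponentially, so $\int^{\infty} t^{-2}c_F(m,t)\,dt$ converges absolutely, and integrating $\frac{\partial}{\partial y}c_G(m,y)$ from a fixed base point shows that $\kappa_G(m) = \lim_{y\to\infty}c_G(m,y)$ exists. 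For the vanishing statement I would instead invoke the at-most-exponential growth bound (ii) for $G$ itself: $c_G(m,y) = e^{2\pi m y}\int_0^1 G(x+iy)e^{-2\pi i mx}\,dx = O\bigl(e^{(C+2\pi m)y}\bigr)$, which tends to $0$ as $y\to\infty$ whenever $m < -C/(2\pi)$; since only finitely many $m \in -\N$ fail this inequality, $\kappa_G(m) = 0$ for all but finitely many $m \in -\N$.

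For $m > 0$ I would integrate the same derivative estimate, using $c_G(m,y) = c_G(m,y_0) + \int_{y_0}^y t^{-2}c_F(m,t)\,dt$ together with the elementary estimate $\int_{y_0}^y t^{a}e^{bt}\,dt = O(y^{a}e^{by})$ for $b > 0$ (integration by parts shows the leading term is $b^{-1}y^{a}e^{by}$). Applying this with $a = \ell_0 - 2$ and $b = 2\pi m$ gives $c_G(m,y) = O(y^{\ell_0}e^{2\pi m y})$, which is the asserted bound with $\ell = \ell_0$.

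I would expect the only genuinely delicate point to be the vanishing of $\kappa_G(m)$ for all but finitely many $m \in -\N$, since this is precisely where the at-most-exponential growth hypothesis on $G$ must be used in an essential way; every other step follows routinely once the key identity $c_F(m,y) = y^2\frac{\partial}{\partial y}c_G(m,y)$ and the polynomial bound on $F$ are in place.
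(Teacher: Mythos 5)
Your proof is correct and follows essentially the same route as the paper's: both rest on the identity $c_F(m,y)=y^2\frac{\partial}{\partial y}c_G(m,y)$, i.e.\ $c_G(m,y)=c_G(m,1)+\int_1^y c_F(m,t)t^{-2}\,dt$, together with the bound $|c_F(m,y)|\leq Cy^{\ell}e^{2\pi m y}$ coming from the polynomial growth of $F\in\Amod{k}$. The paper compresses the rest into ``this easily implies the statements,'' whereas you spell out the one genuinely necessary extra input --- the exponential bound $G=O(e^{Cy})$ from the definition of $\Aexp{k+2}$, which forces $\kappa_G(m)=0$ for $m<-C/(2\pi)$ --- so your write-up is a faithful, fully detailed version of the paper's argument.
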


\begin{proof}
 Let $m \in \Z$. If $L_{k+2}(G) = F \in \Amod{k}$, then we can write
 \[
   c_G(m,y) = c_G(m,1) + \int_1^yc_F(m,t)t^{-2}dt.
 \]
 Since $F(z)=O(y^\ell)$ for some $\ell \in \N_0$, we have $\abs{c_F(m,y)} \leq Cy^\ell e^{2\pi m y}$ for some $C>0$. This easily implies the statements of the lemma.
\end{proof}

Following \cite{borcherds}, we define the \textit{regularized Petersson inner product} of two automorphic forms $F$ and $G$ of weight $k$ by
\begin{equation} \label{eq:peterssoninnerproduct}
  \langle F,G \rangle := \CT_{s = 0}\lim_{T \to \infty}\int_{\mathcal{F}_{T}}F(z)\overline{G(z)} y^{k-s}\frac{dx dy}{y^{2}},
\end{equation}
where $\mathcal{F}_{T} := \{z \in \H: |x| \leq \frac12, |z| \geq 1, y \leq T\}$ is a truncated fundamental domain for $\Gamma \backslash \H$ and $\CT_{s = 0}f(s)$ denotes the constant term in the Laurent expansion at $s = 0$ of the meromorphic continuation of a function $f$.
Of course, the inner product \eqref{eq:peterssoninnerproduct} does not always exist, but this discription is sufficient for the purposes of this work. In particular, $\langle F, G \rangle$
always exists for $F \in \Aexp{k}$ and $G \in M_k$ (see \cite[Lemma~2.10]{ehlensankaran} for a proof).
For notational convenience we define a bilinear pairing between automorphic forms $F$ and $G$ of weight $k$ and $-k$, respectively, by
\[
  \bil{F}{G} := \CT_{s = 0}\lim_{T \to \infty}\int_{\mathcal{F}_{T}}F(z)G(z)y^{-s}\frac{dx  dy}{y^{2}}
              = \left\langle F,y^{-k}\overline{G}\right\rangle,
\]
whenever this exists.

The following result (essentially a special case of \cite[Proposition 2.12]{ehlensankaran}) yields distinguished $L_{k+2}$-preimages of certain functions in $\Amod{k}$.
\begin{proposition}\label{prop:Lsharpint}
Let $F \in \Amod{k}$, such that $\bil{F}{G} = 0$ for all $G \in S_{-k}$.
Then there exists a unique $F^{\#} \in \Aexp{k+2}$ with Fourier expansion
\[
  F^{\#}(z) = \sum_{m \in \Z}c_{F^{\#}}(m,y)e(mz),
\]
such that
\begin{enumerate}
\item $L_{k+2}(F^{\#}) = F$;
\item $\kappa_{F^{\#}}(m) = 0$ for all $m<0$;
\item if $M_{k+2} \neq \{0\}$, then
\begin{equation}\label{eq:ctn}
c_{F^{\#}}(0,y) =  - \CT_{s = 0}\int_{y}^{\infty}c_{F}(0,t)t^{-s-2}dt;
\end{equation}
\item  $F^{\#}$ is orthogonal to cusp forms, i.e., $\langle F, G \rangle = 0$ for all $G \in S_{k+2}$.
\end{enumerate}
The Fourier coefficients of $F^{\#}$ are given by
\[
  c_{F^{\#}}(m, y) = \bil{F_m - P_{m,y}}{F}.
\]
\end{proposition}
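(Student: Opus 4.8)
The plan is to construct $F^\#$ explicitly via its Fourier expansion, show the resulting series lands in $\Aexp{k+2}$, verify properties (i)--(iv), and finally deduce uniqueness. The starting point is the formula for the coefficients, $c_{F^\#}(m,y) = \bil{F_m - P_{m,y}}{F}$. First I would make sense of this pairing: by hypothesis $F \in \Amod{k}$, so $F_m - P_{m,y}$ is a weight $-k$ object (recall $F_m \in H_{-k}$ in the appropriate normalization, since we need $L_{k+2}$ to lower into weight $k$), and the regularized pairing $\bil{\cdot}{\cdot}$ is well-defined by the meromorphic-continuation condition (iii) in Definition \ref{dfn:Amod}. I would then define $F^\#(z) := \sum_{m \in \Z} \bil{F_m - P_{m,y}}{F} e(mz)$ and check that this Fourier series genuinely converges to a smooth function with the claimed growth.

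The heart of the argument is verifying $L_{k+2}(F^\#) = F$. The key computational input is how $L_{k+2}$ acts term-by-term: since $L_{k+2} = -2iy^2 \frac{\partial}{\partial \overline{z}}$ annihilates the holomorphic factor $e(mz)$, applying $L_{k+2}$ reduces to differentiating the coefficient $c_{F^\#}(m,y)$ in $y$. The essential point is that $\frac{\partial}{\partial y}$ of the pairing $\bil{F_m - P_{m,y}}{F}$ picks out exactly the $y$-dependence through the cutoff $\sigma_y$ in the truncated Poincar\'e series $P_{m,y}$ defined in \eqref{eq:cutoffpoincareseries}. Differentiating the characteristic function $\sigma_w(y)$ produces a delta-type contribution localized at the truncation height, and unfolding the Poincar\'e series against $F$ collapses the regularized integral to a boundary term that reproduces $c_F(m,y)$. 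I would carry this out by unfolding $\langle P_{m,y}, \cdot \rangle$ against $F$ in the standard Rankin--Selberg manner, so that the pairing becomes an explicit integral of $c_F(m,t)$ against the cutoff, whose $y$-derivative returns $c_F(m,y) e(mz)$ as desired. This step is essentially the content of \cite[Proposition 2.12]{ehlensankaran}, which I would invoke.

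For the normalization conditions: property (ii), that $\kappa_{F^\#}(m) = 0$ for $m < 0$, should follow because for $m < 0$ we have $F_m = 0$, leaving only $-\bil{P_{m,y}}{F}$, whose $y \to \infty$ limit vanishes since the cutoff Poincar\'e series localizes to $y \geq y_0$ and the regularized integral has no constant term surviving in that limit. Property (iii), the explicit formula for $c_{F^\#}(0,y)$, comes from specializing the unfolding at $m=0$ and matching against the zeroth Fourier coefficient of $F$; the orthogonality condition $\bil{F}{G}=0$ for $G \in S_{-k}$ is precisely what guarantees the $m=0$ and cuspidal contributions are consistent, i.e., it removes the obstruction to $F$ having an $L_{k+2}$-preimage of the required growth. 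Property (iv), orthogonality to cusp forms $S_{k+2}$, I would obtain by pairing the defining series against a cusp form and using that $\bil{F_m - P_{m,y}}{F}$ paired with $G \in S_{k+2}$ reduces, after unfolding, to coefficients that vanish by the cuspidality of $G$.

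Finally, uniqueness: if $F^\#_1, F^\#_2$ both satisfy (i)--(ii) and lie in $\Aexp{k+2}$, their difference $H$ satisfies $L_{k+2}(H) = 0$, hence $H$ is a \emph{holomorphic} weight $k+2$ modular form (the kernel of $L_{k+2}$ on smooth forms of this growth is $M^!_{k+2}$, and conditions (ii) force $\kappa_H(m) = 0$ for $m<0$ so $H$ is actually holomorphic at the cusp, i.e. $H \in M_{k+2}$); combined with property (iv), orthogonality to cusp forms, and the normalization $c_{H}(0,y) = 0$ from (iii), one concludes $H = 0$. The main obstacle I anticipate is the convergence and smoothness of the full Fourier series $\sum_m c_{F^\#}(m,y)e(mz)$ together with the rigorous justification that $L_{k+2}$ may be applied termwise; controlling the growth of the regularized pairings $\bil{F_m - P_{m,y}}{F}$ uniformly in $m$ requires the growth estimates on $F_m$ (the weakly holomorphic basis) of the kind deferred to the appendix, and interchanging the differential operator with the infinite sum is the delicate analytic point rather than the formal identity.
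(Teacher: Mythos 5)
Your proposal inverts the logical direction of the paper's argument, and the inversion opens a gap that your sketch cannot close: \emph{modularity of $F^{\#}$ in $z$}. You define $F^{\#}$ as the Fourier series $\sum_{m}\bil{F_m-P_{m,y}}{F}e(mz)$; such a series is automatically $1$-periodic, and your termwise computation (granting all convergence issues) would give $L_{k+2}(F^{\#})=F$, but nothing in your argument shows invariance under the full group, and membership in $\Aexp{k+2}$ as well as your uniqueness argument require it. The most one can extract from $L_{k+2}(F^{\#})=F$ and the modularity of $F$ is that $H_{\gamma}:=F^{\#}|_{k+2}\gamma-F^{\#}$ is annihilated by $L_{k+2}$, i.e.\ each $H_{\gamma}$ is holomorphic; a priori $F^{\#}$ is only a modular \emph{integral} with a holomorphic period cocycle (exactly the phenomenon occurring in \cite{dit} for the naive generating function \eqref{eq:DITfin}). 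Showing the cocycle vanishes is precisely where the hypothesis $\bil{F}{G}=0$ for all $G\in S_{-k}$ must enter, and tellingly your sketch never uses that hypothesis in any concrete step (your remark that it ``removes the obstruction'' is not attached to anything). The hypothesis is in fact \emph{necessary}: for any modular $F^{\#}\in\Aexp{k+2}$ with $L_{k+2}(F^{\#})=F$, Stokes' theorem gives $\bil{F}{G}=\sum_{m>0}c_{G}(m)\kappa_{F^{\#}}(-m)$, so if it failed, no modular preimage satisfying (ii) could exist — yet your series construction would still go through formally; the only thing that can break is modularity.

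The paper proceeds in the opposite direction to avoid exactly this. It \emph{cites} the existence of some modular $F^{\#}\in\Aexp{k+2}$ with $L_{k+2}(F^{\#})=F$ from \cite[Proposition 2.12]{ehlensankaran}; then — this is where the hypothesis is used — the Stokes identity above together with Borcherds' obstruction theorem \cite[Theorem~3.1]{borcherdsgkz} produces a weakly holomorphic form of weight $k+2$ with principal part $\sum_{m<0}\kappa_{F^{\#}}(m)e(mz)$, which is subtracted to achieve (ii); cusp forms and an Eisenstein series (or constant) are subtracted for (iv) and (iii); uniqueness is as you argue. Only afterwards is the coefficient formula \emph{derived}, by applying Stokes' theorem to the pairing with $F_{m}$ (orthogonality to cusp forms kills the $\xi_{-k}(F_{m})$-term) and unfolding the pairing with $P_{m,y}$. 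Two further problems with your route are worth naming. First, for $m>0$ you implicitly split $\bil{F_m-P_{m,y}}{F}$ into $\bil{F_m}{F}-\bil{P_{m,y}}{F}$ and unfold the latter ``in the standard Rankin--Selberg manner''; but neither piece exists separately, since $F_{m}$ and $P_{m,y}$ grow like $e^{2\pi m y_{1}}$ at the cusp and the $y_{1}^{-s}$-regularization cannot absorb exponential growth — only the difference $F_{m}-P_{m,y}$ has a well-defined pairing, computed in the paper via \eqref{eq:alternativeregularization Fm} with the divergent truncation terms $c_{F^{\#}}(m,T)$ cancelling. (Your lowering computation can be repaired without the split, by noting $c_{F^{\#}}(m,y')-c_{F^{\#}}(m,y)=\bil{P_{m,y}-P_{m,y'}}{F}$ is a compactly supported unfolded integral.) Second, the convergence and termwise differentiability of the full series, which you rightly flag as the analytic crux of your construction, is never needed for Proposition \ref{prop:Lsharpint} in the paper's approach at all — it is only required later for the specific theta kernels, where it is supplied by the appendix estimates.
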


\begin{remark}
	Note that
        \begin{align*}
          L_{k+2}\left(-\CT_{s = 0}\int_{y}^{\infty}c_{F}(0,t)t^{-s-2}dt\right)
         &= y^{2}\frac{\partial}{\partial y}\left(-\CT_{s = 0}\int_{1}^{\infty}c_{F}(0,t)t^{-s-2}dt + \int_{1}^{y}c_{F}(0,t)t^{-2}dt\right)  = c_{F}(0,y).
        \end{align*}
	Thus, the constant term of any $L_{k+2}$-preimage of $F$ and the expression on the right-hand side of \eqref{eq:ctn} can only differ by a constant. In the case $M_{k+2} \neq \{0\}$, we normalize this constant to be zero.
\end{remark}

\begin{proof}[Proof of Proposition \ref{prop:Lsharpint}]
	For the convenience of the reader we only use the existence of some function $F^{\#} \in \Aexp{k+2}$ with $L_{k+2}(F^{\#}) = F$ from \cite[Proposition 2.12]{ehlensankaran}, and sketch the proofs for the remaining claims, which can be found in a more detail in \cite[Theorem 2.14]{ehlensankaran}. For simplicity, we assume that $M_{-k} = \{0\}$ and thus $M_{k+2} \neq \{0\}$; the case that $M_{-k} \neq \{0\}$ and that $M_{k+2} = \{0\}$ is completely analogous.

	We first show how the normalization in (ii), (iii), and (iv) can be achieved. An application of Stokes' Theorem as in the proof of \cite[Proposition~3.5]{bruinierfunke04} shows that for every cusp form $G(z) = \sum_{m > 0}c_{G}(m)e(mz) \in S_{-k}$ we have the formula
	\begin{align*}
	0 = \bil{F}{G} = \bil{L_{k+2}\bigs(F^{\#}\bigs)}{G} = \sum_{m > 0}c_{G}(m)\kappa_{F^{\#}}(-m).
	\end{align*}
	By \cite[Theorem~3.1]{borcherdsgkz} this implies that there exists a weakly holomorphic modular form of weight $2-k$ with principal part $\sum_{m < 0}\kappa_{F^{\#}}(m)e(mz)$. Subtracting it from $F^{\#}$ we can assume that $\kappa_{F^{\#}}(m) = 0$ for all $m < 0$. Furthermore, by subtracting a suitable cusp form from $F^{\#}$ we can assume that $F^{\#}$ is orthogonal to cusp forms, and by subtracting a suitable constant (if $k = -2$) or a suitable multiple of an Eisenstein series we obtain the normalization in (iii). It is clear that the properties (i)--(iv) determine $F^{\#}$ uniquely.

	We next compute the Fourier coefficients of $F^{\#}$.
        For $m \in \Z$ a standard argument (see e.g. \cite[Proposition 2.14]{ehlensankaran}) shows that
	\begin{align}
	\begin{split}\label{eq:alternativeregularization Fm}
	\bil{F_{m}-P_{m,y}}{F} &= \lim_{T \to \infty}\bigg(\int_{\mathcal{F}_{T}}(F_{m}(z_{1})-P_{m,y}(z_{1}))F(z_{1})\frac{dx_{1} dy_{1}}{y_{1}^{2}} \\
	&\qquad \qquad \qquad + \left(c_{F_{m}}^{+}(0)-\delta_{m = 0}\right)\CT_{s = 0}\int_{T}^{\infty}c_{F}(0,y_{1})y_{1}^{-s-2}dy_{1}\bigg).
	\end{split}
	\end{align}
	For $m > 0$, Stokes' theorem yields
	\begin{align*}
	\int_{\mathcal{F}_{T}}F_{m}(z_{1})F(z_{1})\frac{dx_{1}dy_{1}}{y_{1}^{2}} &= -\int_{\mathcal{F}_{T}}F^{\#}(z_{1})\overline{\xi_{-k}(F_{m}(z_{1}))}y_{1}^{k+2}\frac{dx_{1} dy_{1}}{y_{1}^{2}} + \int_{iT}^{1+iT}F_{m}(z_{1})F^{\#}(z_{1})dz_{1}.
	\end{align*}
	The first integral on the right-hand side vanishes as $T \to \infty$ since $F^{\#}$ is orthogonal to cusp forms. The remaining path integral can be evaluated as
	\begin{multline*}
	 c_{F^{\#}}(m,T)+ c^{+}_{F_{m}}(0)c_{F^{\#}}(0,T)+ \sum_{n = 1}^{n_{0}}c^{+}_{F_{m}}(n)c_{F^{\#}}(-n,T) +  \sum_{n = -n_{0}}^{-1}c^{-}_{F_{m}}(n)c_{F^{\#}}(-n,T)W_{k}(2\pi nT) \\
	+\int_{iT}^{1+iT}\sum_{n \geq n_{0}+1}c_{F_{m}}^{+}(n)e(nz_{1})F^{\#}(z_{1})dz_{1} + \int_{iT}^{1+iT}\sum_{n \leq -n_{0}-1}c_{F_{m}}^{-}(n)W_{k}(2\pi|n|y_{1})e(nz_{1})F^{\#}(z_{1})dz_{1},
	\end{multline*}
	where $n_{0}$ is any positive integer chosen such that the integrands in the last two integrals are exponentially decreasing as $y_{1} \to \infty$, which is possible since $F^{\#}$ is of linear exponential growth. In particular, the last two integrals vanish as $T \to \infty$. The finite sums in the second line vanish as $T \to \infty$ since $\lim_{T \to \infty}c_{F^{\#}}(-n,T) = \kappa_{F^{\#}}(-n) = 0$ for $n > 0$ and $c_{F^{\#}}(-n,T) = O(T^{\ell}e^{-2\pi n T})$ for $n < 0$  (see Lemma~\ref{lemma growth of coefficients}), and  $W_{k}(2\pi nT)= O(T^{-k}e^{4\pi n T})$ for $n < 0$. Using the normalization of $c_{F^{\#}}(0,T)$ in item (3) we see that the term $c_{F_{m}}^{+}(0)c_{F^{\#}}(0,T)$ cancels out with one of the extra terms coming from the regularization in \eqref{eq:alternativeregularization Fm}. In particular, in the limit $T \to \infty$ only the term $c_{F^{\#}}(m,T)$ gives a contribution.

	Furthermore, for $m \in \Z$ and for sufficiently large $T$ the unfolding argument gives
	\begin{align*}
	\int_{\mathcal{F}_{T}}P_{m,y}(z_{1})F(z_{1})\frac{dx_{1} dy_{1}}{y_{1}^{2}} &= \int_{y}^{T}\int_{0}^{1}e(-mz_{1})F(z_{1})\frac{dx_{1} dy_{1}}{y_{1}^{2}} \\
	 &= \int_{y}^{T}c_{F}(-m,y_{1})e^{4\pi m y_{1}}y_{1}^{-2}dy_{1} = c_{F^{\#}}(m,T) - c_{F^{\#}}(m,y).
	\end{align*}
	Combining everything, we obtain the stated formula for $c_{F^{\#}}(m,y)$.	\end{proof}

\subsection{Half-integral weight}
\label{sec:half-integral-weight}

We next explain how the analogs of the families $F_{m}$ and $P_{m,y}$ of harmonic Maass forms and the truncated Poincar\'{e} series are defined in the vector-valued case. For simplicity, we only do this for the representation $\varrho_{L}$, since the case $\overline{\varrho}_{L}$ can be treated in the same way.

Let $k \in \frac{1}{2}+\Z$. The spaces $\Amod{k,\varrho_{L}}$ and $\Aexp{k,\varrho_{L}}$ are defined in the obvious way. We can assume that $k+\frac{1}{2}$ is even since otherwise $H_{k,\varrho_{L}} = \{0\}$. For $D > 0$ with $D \equiv 0,1 \pmod{4}$ we let $\mathcal{F}_{D} \in H_{k,\varrho_{L}}^{+}$ be any harmonic Maass form with principal part $e(-\frac{D\tau}{4})\e_{D} + c_{\mathcal{F}_{D}}^{+}(0)\e_{0}$, which is unique up to addition of holomorphic modular forms. If there exists a holomorphic modular form in $M_{k, \varrho_{L}}$ which has constant term equal to one, then we take this as $\mathcal{F}_{0}$, and we further require that $c_{\mathcal{F}_{D}}^+(0) = 0$ for all $D > 0$. If there is no such holomorphic modular form, then we set $\mathcal{F}_0 = 0$. Finally, for $D < 0$, we let $\mathcal{F}_{D} = 0$.

\begin{example}
	If $k = \frac{3}{2}$, then we take the weakly holomorphic forms $\mathcal{F}_{D} = g_{D}$ for $D > 0$ and $\mathcal{F}_{D} = 0$ for $D \leq 0$.
\end{example}

We also define the truncated Poincar\'e series
\begin{align*}
\mathcal{P}_{D,w}(\tau) := \frac{1}{4}\sum_{\gamma \in \widetilde{\Gamma}_{\infty}\setminus \widetilde{\Gamma}}\left(\sigma_{w}(v)e\left(-\frac{D\tau}{4}\right) \e_{D}\right)\bigg|_{k, \varrho_{L}}\, \gamma.
\end{align*}

With these definitions, the following analog of Proposition~\ref{prop:Lsharpint} holds in the vector-valued case.

\begin{proposition}\label{prop:Lsharphalfint}
Let $f \in \Amod{k,\varrho_{L}}$, such that $\bil{f}{g} = 0$ for all $g \in S_{-k,\overline{\varrho}_{L}}$.
Then there exists a unique $f^{\#} \in \Aexp{k+2,\varrho_{L}}$ with Fourier expansion
\[
  f^{\#}(\tau) = \sum_{\substack{D \in \Z \\ D \equiv 0,1 \pmod 4}}c_{f^{\#}}(D,v)e(D\tau),
\]
such that the following conditions hold:
\begin{enumerate}
\item $L_{k+2}(f^{\#}) = f$;
\item $\kappa_{f^{\#}}(D) = 0$ for all $D<0$.
\item If $M_{k+2,\varrho} \neq \{0\}$, then
\begin{equation}\label{eq:ctn2}
c_{f^{\#}}(0,v) =  - \CT_{s = 0}\int_{v}^{\infty}c_{f}(0,t)t^{-s-2}dt;
\end{equation}
\item  $f^{\#}$ is orthogonal to cusp forms, i.e., $\langle f, g \rangle = 0$ for all $g \in S_{k+2,\varrho_{L}}$.
\end{enumerate}
The Fourier coefficients of $f^{\#}$ are given by
\[
  c_{f^{\#}}(D, v) = \bil{\mathcal{F}_D - \mathcal{P}_{D,v}}{f}\e_{d}.
\]
\end{proposition}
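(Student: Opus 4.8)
The plan is to follow the proof of Proposition~\ref{prop:Lsharpint} line by line, replacing each scalar-valued integral-weight ingredient by its vector-valued, half-integral weight counterpart for the Weil representation $\varrho_L$. Concretely, I would first quote the existence part of \cite[Proposition~2.12]{ehlensankaran}, which is stated in enough generality to also cover the vector-valued case, to obtain \emph{some} preimage $f^{\#} \in \Aexp{k+2,\varrho_L}$ with $L_{k+2}(f^{\#}) = f$. Everything that follows consists of successively subtracting holomorphic forms from $f^{\#}$ to enforce the normalizations (ii)--(iv); since these subtractions are annihilated by $L_{k+2}$ they do not alter $L_{k+2}(f^{\#}) = f$.

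For (ii), I would run the vector-valued analog of the Stokes' theorem computation in \cite[Proposition~3.5]{bruinierfunke04}, now using the group-algebra inner product $\langle \e_\mu,\e_\nu\rangle = \delta_{\mu,\nu}$, to obtain for every $g = \sum_{D>0}c_g(D)e(D\tau) \in S_{-k,\overline{\varrho}_L}$ the identity
\[
  0 = \bil{f}{g} = \bil{L_{k+2}(f^{\#})}{g} = \sum_{D>0}\big\langle c_g(D),\kappa_{f^{\#}}(-D)\big\rangle.
\]
Thus the hypothesis says that the principal part datum $\sum_{D<0}\kappa_{f^{\#}}(D)e(D\tau)$ --- whose components in $\C[L'/L]$ obey the symmetry forced by $\varrho_L(Z)\e_h = i\e_{-h}$ (cf.\ Remark~\ref{rem:vectorscalarvaluedisomorphism}) --- pairs trivially with all cusp forms in the obstruction space $S_{-k,\overline{\varrho}_L}$. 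By the vector-valued obstruction theorem for $\varrho_L$ (the half-integral weight analog of \cite[Theorem~3.1]{borcherdsgkz}; cf.\ \cite{borcherds}) this datum is realized as the principal part of a genuine form in $M_{k+2,\varrho_L}^{!}$, which I subtract off to arrange (ii). Subtracting a suitable cusp form in $S_{k+2,\varrho_L}$ then achieves orthogonality to cusp forms (iv) without reintroducing a principal part, and finally subtracting a suitable multiple of a holomorphic Eisenstein series (which exists since $M_{k+2,\varrho} \neq \{0\}$) fixes the constant term as in (iii). For uniqueness, the difference of two candidates lies in $\ker L_{k+2}$, hence is holomorphic on $\H$; condition (ii) then forces it to be holomorphic at the cusp, so it lies in $M_{k+2,\varrho_L}$, and being orthogonal to cusp forms with vanishing constant term it must vanish.

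For the coefficient formula I would reproduce the final computation of Proposition~\ref{prop:Lsharpint}. Starting from $\bil{\mathcal{F}_D - \mathcal{P}_{D,v}}{f}$ and the vector-valued version of the regularized identity \eqref{eq:alternativeregularization Fm}, Stokes' theorem applied to the $\mathcal{F}_D$-integral produces a bulk term (which vanishes as $T\to\infty$ because $f^{\#}$ is orthogonal to cusp forms) plus a boundary integral; all terms of the latter except $c_{f^{\#}}(D,T)$ are killed using (ii), the growth estimates of Lemma~\ref{lemma growth of coefficients}, and the constant-term normalization (iii). Unfolding the $\mathcal{P}_{D,v}$-integral against $\widetilde{\Gamma}_\infty \backslash \widetilde{\Gamma}$ gives $c_{f^{\#}}(D,T) - c_{f^{\#}}(D,v)$, so that the two contributions telescope and leave exactly $c_{f^{\#}}(D,v)$ in the limit. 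Relative to the integral-weight argument the only changes are bookkeeping: the metaplectic slash $|_{k,\varrho_L}$, the factor $\tfrac14$ in the definition of $\mathcal{P}_{D,v}$, and the restriction of the Fourier index to $D \equiv 0,1 \pmod 4$.

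The one genuinely new ingredient, beyond transcribing the already-given integral-weight proof, is the vector-valued half-integral weight obstruction theorem identifying realizable principal parts with functionals annihilating $S_{-k,\overline{\varrho}_L}$; I expect stating and applying this correctly --- in particular respecting the $L'/L$-symmetry of the coefficient data --- to be the main point requiring care. The regularizations ($\CT_{s=0}$) and the convergence of all integrals go through verbatim once the metaplectic analogs of Stokes' theorem and the unfolding are in place, so those steps are routine.
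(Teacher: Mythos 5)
Your proposal is correct and follows essentially the same route as the paper, which gives no separate argument for the vector-valued case but simply declares it the analog of Proposition~\ref{prop:Lsharpint}; your line-by-line transcription (existence via \cite[Proposition~2.12]{ehlensankaran}, normalization via Stokes' theorem and the obstruction theorem, then the Stokes-plus-unfolding computation of the coefficients) is exactly the intended proof. The only remark worth adding is that the ingredient you flag as ``genuinely new'' is in fact the most immediate one: \cite[Theorem~3.1]{borcherdsgkz} is already stated for vector-valued forms for the Weil representation of an even lattice, so it applies directly here, and it was the scalar integral-weight case in Proposition~\ref{prop:Lsharpint} that required specialization.
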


\section{Theta functions}
\label{sec:theta}
In this section we briefly recall the main players of this article, well-known non-holomorphic theta functions for the lattice $L$.

For $z = x+iy \in \H$ and $\lambda \in L'$, we define the quantities
\begin{align*}
Q_{\lambda}(z) := az^{2} +bz + c, \quad p_{\lambda}(z) := -\frac{1}{y}(a|z|^{2} + bx + c),\quad R(\lambda,z) := \frac{1}{2}p_{\lambda}^{2}(z) - (\lambda,\lambda).
\end{align*}
Note that $(\lambda,\lambda)=-\frac12(b^2-4ac)$ for $\lambda \in L'$. Let $D$ be a fundamental discriminant (possibly $1$). For a quadratic form $Q = [a,b,c] \in \calQ_{\Delta}$ of discriminant $\Delta$ such that $D$ divides $\Delta$, we let
\[
\chi_{D}(Q) := \begin{dcases}
\left(\frac{D}{n}\right) & \text{if $\gcd(a,b,c,D) = 1$ and $Q$ represents $n$ with $\gcd(n,D) = 1$}, \\
0 &\text{otherwise},
 \end{dcases}
\]
be the \emph{genus character} as in \cite[Section 1]{kohnen85}. We can view $\chi_{D}$ as a function on $L'/L$ via the identification of elements $\lambda \in L'$ with binary quadratic forms. Define
\[
\widetilde{\varrho}_{L} := \begin{cases} \varrho_{L}  & \text{if } D > 0, \\ \overline{\varrho}_{L} & \text{if } D < 0,  \end{cases}  \qquad \varepsilon(D) := \begin{cases}1 & \text{if } D > 0, \\ i & \text{if } D < 0. \end{cases}
\]

For a rapidly decaying (Schwartz) function $\varphi: L \otimes_\Z \R \to \C$ we consider the following twisted vector-valued theta function
\[
  \Theta(\varphi) := \sum_{\mu \in L'/L} \sum_{\substack{\lambda \in L + D\mu \\ Q(\lambda)\, \equiv\, D Q(\mu) \pmod{D}}} \chi_{D}(\lambda) \varphi(\lambda) \e_\mu.
\]
The Schwartz functions which we use are defined as follows.
We always write $\varphi(\lambda; \tau, z)$,
also depending on $\tau$ and $z$, as a product $\varphi(\lambda; \tau, z) = \varphi^0(\lambda; \tau)\varphi_D^\infty(\lambda; \tau, z)$,
with the Gaussian
\[
  \varphi^\infty_D(\lambda; \tau, z) := \exp\left(\frac{-2\pi v (Q(\lambda) + R(\lambda,z))}{|D|}\right).
\]
The four cases which we need for $\varphi^0$ are
\begin{align*}
\varphi_{S,D}^0(\lambda; \tau) &:= v e\left(\frac{Q(\lambda)u}{|D|}\right),&\quad
\varphi^0_{KM,D}(\lambda; \tau) &:= \left(\frac{vp_{\lambda}^{2}(z)}{|D|} - \frac{1}{2\pi} \right) e\left(\frac{Q(\lambda)u}{|D|}\right),\\
\varphi^0_{Sh,k,D}(\lambda; \tau) &:= v^{k+2} \frac{Q_{\lambda}^{k+1}(\overline{z})}{y^{2k+2}|D|^{\frac{k+1}{2}}} e\left(\frac{Q(\lambda)u}{|D|}\right),&\quad\varphi^0_{M,k,D}(\lambda; \tau) &:= v^{k+1} \frac{p_{\lambda}(z)Q_{\lambda}^{k}(\overline{z})}{y^{2k}|D|^{\frac{k+1}{2}}} e\left(\frac{Q(\lambda)u}{|D|}\right).
\end{align*}
The corresponding theta functions are
then defined as $\Theta_{S,D}^0 := \Theta(\varphi_{S,D}^0\varphi_D^\infty)$, $\Theta_{KM,D} := \Theta(\varphi_{KM,D}^0\varphi_D^\infty)$,
$\Theta_{Sh,k,D} := \Theta(\varphi_{Sh,k,D}^0\varphi_D^\infty)$,
and $\Theta_{M,k,D} := \Theta(\varphi_{M,k,D}^0\varphi_D^\infty)$ and
are called the \emph{Siegel}, \emph{Kudla-Millson}, \emph{Shintani}, and \emph{Millson theta functions}, respectively.
For simplicity, we drop $k$ from the notation when $k=0$.

\begin{remark} By replacing $\lambda$ with $-\lambda$ in the summations, we see that the Siegel and Kudla-Millson theta function vanish for $D < 0$, whereas the Shintani and Millson theta functions vanish for $(-1)^{k}D > 0$.
\end{remark}

We summarize their transformation properties in the following proposition.

\begin{proposition}\label{prop: transformation rules theta functions}
	The theta functions defined above have the following automorphic properties:
        \begin{enumerate}
        \item $\Theta_{S,D}(\cdot,z) \in \Amod{-\frac{1}{2}, \widetilde{\varrho}_L}$
                      and the components of $\Theta_{S,D}(\tau,\cdot)$ are contained in $\Amod{0}$;
       \item $\Theta_{KM,D}(\cdot,z) \in \Amod{\frac{3}{2}, \widetilde{\varrho}_L}$ and the components of $\Theta_{KM,D}(\tau,\cdot)$ are contained in $\Amod{0}$;
       \item $\Theta_{Sh,k,D}(\cdot,z) \in \Amod{-\frac{3}{2}-k, \widetilde{\varrho}_L}$ and the components of $\Theta_{Sh,k,D}(\tau,\cdot)$ are contained in $\Amod{2k+2}$;
       \item $\Theta_{M,k,D}(\cdot,z) \in \Amod{\frac{1}{2}-k, \widetilde{\varrho}_L}$ and the components of $\Theta_{M,k,D}(\tau,\cdot)$ are contained in $\Amod{2k}$.
        \end{enumerate}
\end{proposition}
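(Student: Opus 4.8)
The plan is to verify, for each of the four families, the three defining conditions for membership in the appropriate space $\Amod{}$ separately: the modular transformation law (with the asserted weight and representation), the moderate-growth bound, and the meromorphic-continuation condition of Definition \ref{dfn:Amod}(iii). The organizing observation is that every $\varphi^0$ in the list is $e(Q(\lambda)u/|D|)$ times a polynomial in the coordinates of $\lambda$ that is homogeneous of some bidegree $(m^+,m^-)$ with respect to the splitting of $L\otimes_\Z\R$ into its positive and negative definite parts determined by $z$. Since $L$ has signature $(1,2)$, the bare Gaussian $\varphi^\infty_D$ contributes the base weight $\frac{b^+-b^-}{2}=-\frac12$ in $\tau$, and a polynomial factor of bidegree $(m^+,m^-)$ shifts this weight by $m^+-m^-$.

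First I would read off the bidegrees. The factor in $\varphi^0_{S,D}$ is constant, so $(m^+,m^-)=(0,0)$ and the weight is $-\frac12$; in $\varphi^0_{KM,D}$ the term $p_\lambda^2(z)$ has bidegree $(2,0)$, giving $-\frac12+2=\frac32$; in $\varphi^0_{Sh,k,D}$ the factor $Q_\lambda^{k+1}(\overline z)$ has bidegree $(0,k+1)$, giving $-\frac12-(k+1)=-\frac32-k$; and in $\varphi^0_{M,k,D}$ the factor $p_\lambda(z)Q_\lambda^{k}(\overline z)$ has bidegree $(1,k)$, giving $-\frac12+1-k=\frac12-k$. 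These agree with the claimed $\tau$-weights. The transformation law in $\tau$ under $\varrho_L$ then follows from the Weil-representation form of the theta transformation formula \cite{borcherds} (the Siegel and Kudla--Millson cases being classical, see \cite{bruinierfunke06}), while the genus-character twist is handled as for twisted theta series: twisting by $\chi_D$ and summing over $L+D\mu$ amounts to rescaling the quadratic form by $D$, so that for $D>0$ one obtains a form for $\varrho_L$ and for $D<0$ one for $\overline\varrho_L$, matching the definition of $\widetilde\varrho_L$ (the factor $\varepsilon(D)$ absorbing the resulting change of multiplier).

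Next I would treat the weight in $z$. The point is that $Q(\lambda)$, $R(\lambda,z)$ and $p_\lambda(z)$ are invariant under the simultaneous substitution $\lambda\mapsto\gamma.\lambda$, $z\mapsto\gamma z$ for $\gamma\in\Gamma$, so that $\varphi^\infty_D$ and the $p_\lambda$-factors contribute nothing to the $z$-weight, whereas the combination $Q_\lambda(\overline z)/y^2$ acquires the automorphy factor $(cz+d)^{2}$; taken to the power $k+1$ (resp.\ $k$) this yields weight $2k+2$ for the Shintani and $2k$ for the Millson theta function, and weight $0$ for Siegel and Kudla--Millson, where no $Q_\lambda(\overline z)$ appears. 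Reindexing the $\Gamma$-stable sum by $\lambda\mapsto\gamma^{-1}.\lambda$ and using the $\Gamma$-invariance of $\chi_D$ then gives the stated $z$-transformation for $\Gamma$. The moderate-growth condition (ii) is a routine consequence of the rapid decay of $\varphi^\infty_D$: as the relevant imaginary part tends to infinity, only $\lambda=0$ and finitely many small-norm vectors survive up to exponentially small error, and the prefactors $v^{k+2}$, $y^{-2k-2}$, and the like produce at most polynomial growth, uniformly after differentiation in either variable.

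The step I expect to be the main obstacle is condition (iii): the meromorphic continuation to $\Re(s)>-\varepsilon$ of the Mellin transform $\int_1^\infty c(0,t)\,t^{-2-s}\,dt$ of the zeroth Fourier coefficient (in $\tau$ for the first assertion of each item, in $z$ for the second). I would compute this zeroth coefficient explicitly from the $\lambda=0$ term together with the isotropic and rank-one contributions that survive integration over the relevant period; these are expressible through incomplete Gamma functions and powers of the imaginary part, whose Mellin transforms are quotients of Gamma functions and hence continue meromorphically, with at worst a pole produced by the polynomial growth at the cusp. The only delicate bookkeeping is to check that this growth does not create a pole just to the left of $s=0$, which is exactly what Definition \ref{dfn:Amod}(iii) requires; this follows once the leading cuspidal term of $c(0,t)$ has been isolated. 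Combining the transformation law, the growth bound, and this continuation establishes each of (i)--(iv).
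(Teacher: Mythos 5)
Your route is, in substance, the one the paper itself gestures at in the closing sentence of its proof: prove the transformation laws for $D=1$ by Poisson summation as in \cite[Theorem~4.1]{borcherds}, and handle $D\neq 1$ by the twisting (intertwining) operator of \cite{alfesehlen}. The paper's actual proof, however, is citation-based: it quotes \cite[Theorem~4.1]{bruinierono} for the twisted Siegel function, \cite[Theorems~3.1 and 3.3]{agor} (and \cite{hoevel}) for the Millson and Shintani functions, and \cite[Proposition~4.1]{alfesehlen} for the twisted Kudla--Millson function. Your bidegree bookkeeping does reproduce all four $\tau$-weights correctly, and your $z$-weight argument via the invariance of $Q(\lambda)$, $p_{\lambda}(z)$, $R(\lambda,z)$ and the covariance of $Q_{\lambda}(\overline{z})/y^{2}$ is sound. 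One caveat before the main issue: the rule ``a polynomial of bidegree $(m^{+},m^{-})$ shifts the weight by $m^{+}-m^{-}$'' is literally true only for \emph{harmonic} polynomials; Borcherds' theorem applies to $\exp(-\Delta/8\pi v)p$, and $p_{\lambda}^{2}$ is not harmonic. This is precisely why $\varphi^{0}_{KM,D}$ carries the correction term $-\tfrac{1}{2\pi}$. Your weights come out right only because the Schwartz functions in the paper already have this correction built in; an argument run from your stated principle alone would produce a non-modular object in the Kudla--Millson case.

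The genuine gap is in your verification of the moderate-growth condition (ii) of Definition \ref{dfn:Amod} for the components of $\Theta(\tau,\cdot)$ as functions of $z$. It is false that ``only $\lambda=0$ and finitely many small-norm vectors survive up to exponentially small error'' as $y\to\infty$: for $\lambda$ corresponding to a form $[0,b,c]$ one has $R(\lambda,z)=|bz+c|^{2}/\left(2y^{2}\right)\to b^{2}/2$, so none of the infinitely many vectors with $a=0$ decays, and for $\lambda=[0,0,c]$ the Gaussian even tends to $1$. The polynomial growth in $y$ (for instance $\Theta_{S,D}(\tau,z)\sim C\sqrt{v}\,y$) arises exactly from summing these infinitely many contributions along the isotropic line fixing the cusp, which requires Poisson summation in the $c$-variable --- equivalently, the $\widetilde{\Gamma}_{\infty}\backslash\widetilde{\Gamma}$-expansions recorded in Proposition \ref{proposition theta functions expansion}. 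As written, your growth argument (and with it the uniformity in $x$ and the derivative bounds) fails for the $z$-aspect of each of the four items; it is fixable by standard methods, and indeed your own treatment of condition (iii), which invokes ``isotropic and rank-one contributions,'' implicitly concedes the point. For the $\tau$-aspect your argument is fine, since the majorant $Q(\lambda)+R(\lambda,z)$ is positive definite, so for fixed $z$ every nonzero $\lambda$ genuinely decays exponentially as $v\to\infty$.
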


\begin{proof}
	For the twisted Siegel theta function see \cite[Theorem 4.1]{bruinierono}. The
  transformation properties of the Millson and Shintani theta functions are
  stated in \cite[Theorems 3.1 and 3.3]{agor}. We remark that the Millson theta
  function has been studied earlier in \cite[Satz 2.8]{hoevel}. The twisted
  Kudla-Millson theta function is investigated in
  \cite[Proposition~4.1]{alfesehlen}. In general, the transfomation rules of
  such theta functions can be proved for $D = 1$ by Poisson summation as in
  \cite[Theorem 4.1]{borcherds}, and for $D \neq 1$ by using the twisting
  operator introduced in \cite{alfesehlen}.
\end{proof}

The theta functions are related by various differential equations.

\begin{proposition}\label{prop: differential equations}
  ~\begin{enumerate}
  \item The Siegel and the Kudla-Millson theta function are related by
		\begin{align*}
		L_{\frac{3}{2},\tau}\left(\Theta_{KM,D}(\tau,z)\right) =& \frac{1}{4\pi}\Delta_{0,z}\left(\Theta_{S,D}(\tau,z)\right),\qquad
		R_{-\frac{1}{2},\tau}\left(\Theta_{S,D}(\tau,z)\right) = -\pi \Theta_{KM,D}(\tau,z).
		\end{align*}
\item The Shintani and the Millson theta function are related by
		\begin{align*}
		L_{\frac{1}{2},\tau}\left(\Theta_{M,D}(\tau,z)\right) &= \frac{1}{2} L_{2,z}\left(\Theta_{Sh,D}(\tau,z)\right),\qquad
		R_{-\frac{3}{2},\tau}\left(\Theta_{Sh,D}(\tau,z)\right) = \frac{1}{2}R_{0,z}\left( \Theta_{M,D}(\tau,z)\right).
		\end{align*}
  \end{enumerate}
\end{proposition}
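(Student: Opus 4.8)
The plan is to reduce each of the four identities to a pointwise identity for a single summand of the theta series. Because the Gaussian $\varphi_D^\infty$ decays rapidly in $\lambda$, each theta function converges locally uniformly together with all of its partial derivatives in $\tau$ and $z$; hence the operators $L_{\bullet}$, $R_{\bullet}$ and $\Delta_{0,z}$ may be applied termwise, and the genus character $\chi_D(\lambda)$, the basis vectors $\e_\mu$, and the congruence conditions defining the summation are all independent of $\tau$ and $z$ and therefore commute with these operators. It thus suffices to verify each identity for the summand $\varphi = \varphi^0\varphi_D^\infty$, where I abbreviate by $\calE := e(Q(\lambda)u/|D|)\,\varphi_D^\infty$ the exponential factor common to all four cases, so that $\varphi_{S,D}=v\calE$, $\varphi_{KM,D}=(\tfrac{vp_\lambda^2(z)}{|D|}-\tfrac{1}{2\pi})\calE$, and similarly for the weight-$0$ Shintani and Millson summands.

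First I would record the elementary derivative facts. A direct calculation gives $\partial_{\overline\tau}\calE=-\tfrac{\pi i}{|D|}R(\lambda,z)\calE$ and $\partial_\tau\calE=\tfrac{\pi i}{|D|}(2Q(\lambda)+R(\lambda,z))\calE$, while in the $z$ variable the crucial identities are $\partial_z p_\lambda=-\tfrac{i}{2y^2}Q_\lambda(\overline z)$ and $\partial_{\overline z}p_\lambda=\tfrac{i}{2y^2}Q_\lambda(z)$, from which $\partial_z\calE$ and $\partial_{\overline z}\calE$ follow since $R=\tfrac12 p_\lambda^2-(\lambda,\lambda)$. Alongside these I would record three algebraic identities that do all the real work: $2Q(\lambda)+R(\lambda,z)=\tfrac12 p_\lambda^2(z)$ (from $(\lambda,\lambda)=2Q(\lambda)$), $|Q_\lambda(z)|^2/y^2=2R(\lambda,z)$, and $i\,\partial_z Q_\lambda(z)-Q_\lambda(z)/y=p_\lambda(z)$ together with its complex conjugate. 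All three are elementary consequences of the definitions of $p_\lambda$, $Q_\lambda$ and $R$.

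With these in hand, the two raising identities and the Millson--Shintani lowering identity are first-order computations. For the second identity of (i), applying $R_{-1/2,\tau}=2i\partial_\tau-\tfrac1{2v}$ to $v\calE$ and inserting $2Q+R=\tfrac12 p_\lambda^2$ collapses the result to exactly $-\pi\varphi_{KM,D}$; for (ii), both $R_{-3/2,\tau}\varphi_{Sh,D}$ and $\tfrac12 R_{0,z}\varphi_{M,D}$ reduce through the same identity to the common expression $\tfrac{Q_\lambda(\overline z)}{y^2|D|^{1/2}}(\tfrac v2-\tfrac{\pi v^2 p_\lambda^2}{|D|})\calE$, and $L_{1/2,\tau}\varphi_{M,D}=\tfrac12 L_{2,z}\varphi_{Sh,D}$ follows after computing $\partial_{\overline z}$ of the Shintani summand and using $|Q_\lambda(z)|^2/y^2=2R$ together with the conjugate of the third identity to absorb the stray $\partial_{\overline z}Q_\lambda(\overline z)$ term into $p_\lambda$.

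The hard part will be the first identity of (i), as it is the only one involving a second-order operator, $\Delta_{0,z}=-4y^2\partial_z\partial_{\overline z}$. Here I would first apply $\partial_{\overline z}$ to $v\calE$ and then $\partial_z$ to the resulting product $\tfrac{p_\lambda Q_\lambda(z)}{y^2}\calE$; the product rule produces four terms, among them an unwanted piece carrying the holomorphic derivative $\partial_z Q_\lambda(z)=2az+b$ and a term proportional to $p_\lambda Q_\lambda(z)/y$. The decisive point is that these two terms combine, precisely via $i\,\partial_z Q_\lambda(z)-Q_\lambda(z)/y=p_\lambda$, into the term $p_\lambda^2$, while $|Q_\lambda(z)|^2/y^2=2R$ disposes of the remaining quadratic contributions; after multiplying by $\tfrac{1}{4\pi}(-4y^2v)$ the outcome matches $L_{3/2,\tau}\varphi_{KM,D}$ exactly, constant included. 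The only point requiring genuine care beyond this bookkeeping is the justification of termwise differentiation and convergence, which is supplied by the rapid decay of $\varphi_D^\infty$ and the membership statements of Proposition~\ref{prop: transformation rules theta functions}.
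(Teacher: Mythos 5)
Your proposal is correct and takes essentially the same approach as the paper: the paper proves these relations by exactly this kind of termwise direct calculation at the level of the Schwartz-function summands, delegating the details to the references it cites (Theorem 4.2 of Bruinier--Funke for part (i), Lemma 3.4 of the cited work on the Shintani and Millson theta functions for part (ii)). Your auxiliary identities ($(\lambda,\lambda)=2Q(\lambda)$, $2R(\lambda,z)=y^{-2}\left|Q_\lambda(z)\right|^2$, $i\,\partial_z Q_\lambda(z)-Q_\lambda(z)/y=p_\lambda(z)$, together with the stated derivative formulas for $p_\lambda$ and the exponential factor) are all valid, and the four summand-level computations close up exactly as you describe.
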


\begin{proof}
	These relations can be proved by a direct calculation. See \cite[Theorem 4.2]{bruinierfunke04} for the Siegel and the Kudla-Millson theta function, and \cite[Lemma 3.4]{agor} for the Shintani and the Millson theta function.
\end{proof}

We also need the following expansions of the Kudla-Millson and the Millson theta function.
\begin{proposition}\label{proposition theta functions expansion}
  ~\begin{enumerate}
  \item  The Kudla-Millson theta function has the expansion
\begin{align*}
		\Theta_{KM,D}(\tau,z) &= -\frac{y^{3}}{|D|}\varepsilon(D)\sum_{n=1}^{\infty}n^{2}\left(\frac{D}{n}\right) \\
		& \qquad \times \sum_{\gamma \in \widetilde{\Gamma}_{\infty}\setminus \widetilde{\Gamma}}\left(\exp\left(-\pi\frac{y^{2}n^{2}}{v|D|}\right)v^{-\frac32}\sum_{b \in \Z}e\left(-|D|\frac{b^{2}}{4}\overline{\tau}-bnx \right)\e_{Db}\right)\Bigg|_{\frac32,\widetilde{\varrho}_{L}}\gamma.
	\end{align*}
      \item The Millson theta function has the expansion
		\begin{align*}
		\Theta_{M,D}(\tau,z) &= -\frac{y^{2}}{2i\sqrt{|D|}}\varepsilon(D)\sum_{n=1}^{\infty}n\left(\frac{D}{n}\right) \\
		& \qquad \times \sum_{\gamma \in \widetilde{\Gamma}_{\infty}\setminus \widetilde{\Gamma}}\left(\exp\left(-\pi\frac{y^{2}n^{2}}{v|D|}\right)v^{-\frac12}\sum_{b \in \Z}e\left(-|D|\frac{b^{2}}{4}\overline{\tau}-bnx \right)\e_{Db}\right)\Bigg|_{\frac12,\widetilde{\varrho}_{L}}\gamma.
	\end{align*}
  \end{enumerate}
\end{proposition}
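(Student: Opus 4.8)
The plan is to prove both identities in parallel as Fourier--Jacobi expansions of the respective theta functions along the cusp $\infty$, obtained by Poisson summation in the isotropic directions of the lattice. By Proposition~\ref{prop: transformation rules theta functions} each theta function is already known to be modular in $\tau$ (of weight $\tfrac32$, resp.\ $\tfrac12$, for $\widetilde{\varrho}_L$) and $\Gamma$-invariant of weight $0$ in $z$, so the content of the statement is only the \emph{explicit} shape of the seed and the reorganization of the rank-three lattice sum into a Poincar\'e series over $\widetilde{\Gamma}_{\infty}\backslash\widetilde{\Gamma}$. Computations of exactly this type, for the untwisted lattice and for a single theta function, are carried out in the references cited in Proposition~\ref{prop: transformation rules theta functions} (e.g.\ \cite{alfesehlen} for the Kudla--Millson and \cite{agor} for the Millson theta function), and I would adapt them to the twisted setting here.

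For the geometric setup I would fix a primitive isotropic vector $\ell \in L$ whose stabilizer in $\Gamma$ is $\Gamma_\infty$ (the vector corresponding to $[0,0,1]$, which one checks is fixed by conjugation with $\smallTmatrix$), a complementary isotropic $\ell'$ so that $\langle \ell,\ell'\rangle$ is a hyperbolic plane, and the negative-definite line in the $b$-direction orthogonal to it. Writing a lattice vector in these coordinates and inserting the explicit Schwartz functions $\varphi^{0}_{KM,D}\varphi^{\infty}_{D}$ (resp.\ $\varphi^{0}_{M,D}\varphi^{\infty}_{D}$), I would apply Poisson summation in the coefficient of $\ell$. This converts the Gaussian factor $\exp(-2\pi v\,(Q(\lambda)+R(\lambda,z))/|D|)$ into the dual Gaussian $\exp(-\pi y^{2}n^{2}/(v|D|))$ indexed by the new variable $n$, turns the polynomial factor $p_{\lambda}^{2}(z)$ (resp.\ $p_{\lambda}(z)$) into the frequency power $n^{2}$ (resp.\ $n$), and produces the prefactor $v^{-3/2}$ (resp.\ $v^{-1/2}$) and the powers of $y$. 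The coset sum over $\widetilde{\Gamma}_{\infty}\backslash\widetilde{\Gamma}$ in $\tau$ arises from the remaining isotropic/hyperbolic directions, while the surviving sum over the negative-definite $b$-direction is precisely the one-dimensional theta series $\sum_{b\in\Z}e\!\left(-|D|\tfrac{b^{2}}{4}\overline{\tau}-bnx\right)\e_{Db}$; the antiholomorphic dependence in $\overline{\tau}$ is exactly what Poisson summation produces for these negative-norm contributions.

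The step I expect to be the main obstacle is the \emph{twisted} Poisson summation together with the accompanying Gauss-sum evaluation. Carrying out the summation with the genus character $\chi_{D}$ in place, one must identify the resulting finite exponential sum over residues modulo $D$ with the Kronecker symbol $\left(\tfrac{D}{n}\right)$, and then track the metaplectic multiplier carefully enough to extract the exact constant $\varepsilon(D)$ and the normalizing power of $|D|$, as well as to verify that the seed is genuinely $\widetilde{\Gamma}_{\infty}$-equivariant under $|_{\frac32,\widetilde{\varrho}_L}$ (resp.\ $|_{\frac12,\widetilde{\varrho}_L}$) so that the displayed Poincar\'e series is well defined. Granting this, the normal convergence of both the seed and the refolded series follows from the rapid decay of the Gaussians, which justifies all rearrangements.

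Finally, the Millson expansion is entirely parallel to the Kudla--Millson one: the only essential change is that the polynomial factor in $\varphi^{0}_{M,D}$ is odd of degree one rather than even of degree two, which lowers $n^{2}$ to $n$, replaces $v^{-3/2}$ by $v^{-1/2}$, and inserts the factor $\tfrac{1}{2i}$. This is consistent with the vanishing statements (the Siegel and Kudla--Millson theta functions vanish for $D<0$, the Shintani and Millson ones for $(-1)^{k}D>0$) recorded after the definition of the theta functions, and the two expansions are moreover compatible with the differential relations of Proposition~\ref{prop: differential equations}, which provides a useful consistency check.
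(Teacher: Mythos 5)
Your plan is correct in substance but takes a genuinely different route from the paper. The paper proves (i) without any new lattice computation: it takes the expansion of the twisted \emph{Siegel} theta function already established in \cite[Theorem 4.8]{bruinierono} and applies the raising operator in $\tau$, using the relation $R_{-\frac{1}{2},\tau}\left(\Theta_{S,D}\right) = -\pi\,\Theta_{KM,D}$ from Proposition \ref{prop: differential equations} (i); since $R_{k}\left(f|_{k}\gamma\right) = \left(R_{k}f\right)|_{k+2}\gamma$ and the Gaussian seed permits termwise differentiation of the Poincar\'e series, raising the weight $-\frac{1}{2}$ expansion immediately yields the stated weight $\frac{3}{2}$ expansion --- this is precisely where the factors $n^{2}$ and $v^{-\frac{3}{2}}$ come from. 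For (ii) the paper simply cites \cite[Satz 2.22]{hoevel}. Your Poisson-summation argument along a primitive isotropic vector is essentially how those cited results are themselves proved, so it is a legitimate and more self-contained alternative; what it costs is exactly the step you flag as the main obstacle. Be aware that in the literature the twist is usually not handled by a direct ``twisted Poisson summation'': one first uses the twisting (intertwining) operator of \cite{alfesehlen} to write the $\chi_{D}$-twisted theta function as a finite linear combination of untwisted theta functions of a rescaled lattice, performs the Borcherds-type reduction there, and the Gauss sums producing $\left(\frac{D}{n}\right)$, $\varepsilon(D)$, and the powers of $|D|$ are packaged into that operator once and for all; doing the character sum and the metaplectic multiplier bookkeeping by hand, as you propose, is possible but substantially heavier, and in effect reproves \cite{bruinierono} and \cite{hoevel}. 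Conversely, your route has the advantage of treating (i) and (ii) uniformly, whereas the paper's differential-operator shortcut works for (i) only because Proposition \ref{prop: differential equations} expresses $\Theta_{KM,D}$ itself (not merely one of its derivatives) as a raising of the Siegel kernel; no analogous relation isolates $\Theta_{M,D}$ from the Shintani kernel, which is why the paper falls back on a citation for the Millson case.
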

\begin{proof}
	The formula for the Kudla-Millson theta function follows from \cite[Theorem 4.8]{bruinierono}
        together with Proposition \ref{prop: differential equations}, (i).
        The expansion of the Millson theta function is given in \cite[Satz 2.22]{hoevel} (correcting a minor typo).
\end{proof}

\section{The Kudla-Millson and the Siegel theta functions}
\label{sec:KM and Siegel preimage}

Throughout this section, we let $D > 0$ be a fundamental discriminant.
By Proposition~\ref{prop:Lsharpint}, the function
\[
\Theta_{KM,D}^{\#,z}(\tau,z):=\sum_{m \in \Z}\bil{F_{m}-P_{m,y}}{\Theta_{KM,D}(\tau,\cdot)}e(mz)
\]
is a $L_{2,z}$-preimage of $\Theta_{KM,D}(\tau,z)$, which transforms like a modular form of weight two in $z$ and is smooth in $z$. Furthermore, we see that it transforms like a modular form of weight $\frac{3}{2}$ for $\varrho_{L}$ in $\tau$, but its analytic properties as a function of $\tau$ and its growth as $v \to \infty$ are not clear from the definition. We first compute a more explicit formula for $\Theta_{KM,D}^{\#,z}(\tau,z)$.

\begin{theorem}\label{thm:KMD} We have the formula
	\begin{align*}
\Theta_{KM,D}^{\#,z}(\tau,z)=\  2\delta_{D = 1}\mathcal H(\tau)+\widetilde{g}_{0}(\tau,y)
& + 2\sqrt{D}\sum_{m > 0}\sum_{n \mid m}\left( \frac{D}{m/n}\right)n\big(\widetilde{g}_{Dn^{2}}(\tau,my) - g_{Dn^{2}}(\tau)\big)e(mz) \\
& + 2\sqrt{D}\sum_{m < 0}\sum_{n \mid m}\left( \frac{D}{m/n}\right)n\widetilde{g}_{Dn^{2}}(\tau,my)e(mz),
\end{align*}
where $\mathcal{H}(\tau)$ is defined in \eqref{eq:H}, and
\begin{align*}
\widetilde{g}_{0}(\tau,y)& := \frac{1}{4\pi}\sum_{n=1}^{\infty}\left( \frac{D}{n}\right)\sum_{\gamma \in \widetilde{\Gamma}_{\infty}\setminus \widetilde{\Gamma}}\left(v^{-\frac12}\exp\left( -\frac{\pi n^{2}y^{2}}{vD}\right)\e_{0}\right)\bigg|_{\frac32,\varrho_{L}}\gamma, \\
\widetilde{g}_{Dn^{2}}(\tau,y) &:= \frac{1}{4}\sum_{\gamma \in \widetilde{\Gamma}_{\infty}\setminus \widetilde{\Gamma}}\left(\K_{y}\left(Dn^{2}v\right)e\left(-\frac{Dn^{2}\tau}{4}\right)\e_{Dn^{2}}\right) \bigg|_{\frac32,\varrho_{L}}\gamma,
\end{align*}
with the special function
\begin{align*}
\K_{w}(v) &:= \frac{w^{2}}{v^{\frac32}}\int_{1}^{\infty}\exp\left(-\pi \left(\frac{wt}{\sqrt{v}}-\sqrt{v}\right)^{2}\right)tdt.
\end{align*}
\end{theorem}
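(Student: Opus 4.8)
The plan is to read off the Fourier coefficients of $\Theta_{KM,D}^{\#,z}(\tau,z)$ in $z$ directly from Proposition~\ref{prop:Lsharpint}, applied componentwise in the integral-weight case $k=0$ with $F=\Theta_{KM,D}(\tau,\cdot)$. The hypotheses hold: by Proposition~\ref{prop: transformation rules theta functions}~(ii) the components of $\Theta_{KM,D}(\tau,\cdot)$ lie in $\Amod{0}$, and the condition $\bil{\Theta_{KM,D}}{G}=0$ for $G\in S_{-k}=S_0$ is vacuous, as there are no weight-zero cusp forms. Thus the $m$-th coefficient equals $\bil{F_m-P_{m,y}}{\Theta_{KM,D}(\tau,\cdot)}$, which I split as $\bil{F_m}{\Theta_{KM,D}}-\bil{P_{m,y}}{\Theta_{KM,D}}$, recalling that $F_m=j_m$ for $m\ge 0$ and $F_m=0$ for $m<0$.

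For the first summand, $\bil{F_m}{\Theta_{KM,D}}=\langle F_m,\overline{\Theta_{KM,D}}\rangle$ is the twisted Kudla--Millson theta lift of $j_m$. For $m\ge 0$ I would invoke the known evaluation of this lift (\cite{bruinierfunke06} in the case $D=1$, and its twisted analogue from \cite{bruinierono,alfesehlen}): lifting $F_0=1$ gives $2\delta_{D=1}\mathcal{H}(\tau)$, while for $m>0$ one obtains $-2\sqrt{D}\sum_{n\mid m}\left(\frac{D}{m/n}\right)n\,g_{Dn^2}(\tau)$, whose coefficients are twisted traces of CM values of $j_m$. For $m<0$ this summand is absent. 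These account for the terms $2\delta_{D=1}\mathcal{H}$ and the $-g_{Dn^2}$ contributions in the asserted formula.

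For the second summand I would unfold the truncated Poincar\'e series exactly as in the proof of Proposition~\ref{prop:Lsharpint}, reducing $\bil{P_{m,y}}{\Theta_{KM,D}}$ to a regularized integral over $y_1\in[y,\infty)$ of the appropriate $z$-Fourier coefficient of $\Theta_{KM,D}(\tau,\cdot)$. Inserting the expansion of $\Theta_{KM,D}$ from Proposition~\ref{proposition theta functions expansion}, the $x_1$-integration forces $b\nu=-m$ in the seed, where $\nu$ denotes the summation index there; hence only divisors $\nu\mid m$ survive, and setting $n=m/\nu$ recovers the index $Dn^2$ from the antiholomorphic exponent $e(-Db^2\overline{\tau}/4)$ together with the genus character $\left(\frac{D}{m/n}\right)$. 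What remains is a single $\tau$-Poincar\'e series whose $y_1$-integrand is essentially $y_1\exp\!\big(-\pi y_1^2\nu^2/(vD)\big)e^{2\pi m y_1}$. Substituting $y_1=yt$ and completing the square turns this into $\int_1^\infty t\exp\!\big(-\pi(myt/\sqrt{Dn^2v}-\sqrt{Dn^2v})^2\big)\,dt$, which is precisely the integral defining $\K_{my}(Dn^2v)$, while the Gaussian constant $e^{\pi Dn^2v}$ produced by the square effects the conversion $e(-Dn^2\overline{\tau}/4)\mapsto e(-Dn^2\tau/4)$ demanded by the definition of $\widetilde{g}_{Dn^2}$. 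Collecting the prefactors yields $\bil{P_{m,y}}{\Theta_{KM,D}}=-\widetilde{g}_0(\tau,y)$ for $m=0$ and $-2\sqrt{D}\sum_{n\mid m}\left(\frac{D}{m/n}\right)n\,\widetilde{g}_{Dn^2}(\tau,my)$ for $m\ne 0$.

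Subtracting the second summand from the first reproduces the claimed expansion in all three ranges $m=0$, $m>0$, $m<0$. The main obstacle lies in this second summand: one must justify interchanging the $z$-integration with the $\tau$-Poincar\'e summation and the divisor sum (legitimate by Gaussian decay once the $\CT_{s=0}$-regularization is in force), track the metaplectic normalization passing between $\Gamma_\infty\backslash\Gamma$ and $\widetilde{\Gamma}_\infty\backslash\widetilde{\Gamma}$ that pins down the constants $\tfrac14$ and $\tfrac{1}{4\pi}$, and---most delicately---carry out the completing-the-square step so that the emergent cutoff is exactly $my$ and the square constant performs the required $\overline{\tau}\mapsto\tau$ conversion. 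Recognizing that the regularized integral closes up into the special function $\K$, rather than an incomplete Gamma expression, is the technical heart of the computation.
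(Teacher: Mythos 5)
Your proposal is correct and follows essentially the same route as the paper: the coefficients $\bil{F_m-P_{m,y}}{\Theta_{KM,D}(\tau,\cdot)}$ are evaluated by citing the known (twisted) Kudla--Millson lifts of $F_m$ from \cite{bruinierfunke06} and \cite{alfesehlen}, and by unfolding $P_{m,y}$ against the expansion of $\Theta_{KM,D}$ from Proposition~\ref{proposition theta functions expansion}, with the $x_1$-integration producing the divisor condition and the $y_1$-integral closing up into $\K_{my}(Dn^2v)$ after completing the square (which also converts $e(-Dn^2\overline{\tau}/4)$ into $e(-Dn^2\tau/4)$). Your accounting of the genus character $\left(\frac{D}{m/n}\right)$, the index substitution $n \mapsto m/n$, and the final constants all agree with the paper's computation.
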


\begin{remark}
  We have
  \begin{equation*}
    \K_{w}(v) = \frac{\sgn(w)}{2}\erfc\left(\sgn(w)\sqrt{\pi}\left(\frac{ w}{\sqrt{v}}-\sqrt{ v}\right)\right)
                  - \frac{1}{2\pi\sqrt{v}}\exp\left( -\pi\left(\frac{w}{\sqrt{v}}-\sqrt{v} \right)^{2}\right),
  \end{equation*}
  where $\erfc(x) := \frac{2}{\sqrt{\pi}}\int_{x}^{\infty}e^{-t^{2}}dt$ is the \textit{complementary error function}.
\end{remark}
\begin{proof}[Proof of Theorem \ref{thm:KMD}]
	For $m \geq 0$ the Kudla-Millson lift of $F_{m}$ has been computed in \cite[Section~6]{bruinierfunke06} for $D = 1$, and in \cite[Theorem~1.1]{alfesehlen} for $D > 1$. It is given by
	\begin{align*}
	\bil{F_{m}}{\Theta_{KM,D}(\tau,\cdot)} =
	\begin{cases}
	0 & \text{if } m < 0, \\
	2\delta_{D = 1}\mathcal H(\tau) & \text{if } m = 0, \\
	-2\sqrt{D}\sum_{n \mid m}\left( \frac{D}{m/n}\right)ng_{Dn^{2}}(\tau) & \text{if } m > 0.
	\end{cases}
	\end{align*}
	For the Kudla-Millson lift of $P_{m,y}$, we plug in the expansion of $\Theta_{KM,D}(\tau,z)$ given in Proposition~\ref{proposition theta functions expansion}, and use the unfolding argument, to obtain
		\begin{align*}
		\bil{P_{m,y}}{\Theta_{KM,D}(\tau,\cdot)}
		&= -\frac{1}{2D}\sum_{n=1}^{\infty}n^{2}\left(\frac{D}{n}\right)\sum_{\gamma \in \widetilde{\Gamma}_{\infty}\setminus \widetilde{\Gamma}}\int_{y}^{\infty}\int_{0}^{1}e(-mz_{1}) \\
		& \quad \times \left(\exp\left(-\pi\frac{y_{1}^{2}n^{2}}{vD}\right)v^{-\frac32}\sum_{b \in \Z}e\left(-\frac{Db^{2}}{4}\overline{\tau}-bnx_{1} \right) y_{1}^{3} \frac{dx_{1} dy_{1}}{y_{1}^{2}}\e_{Db}\right)\bigg|_{\frac32,\varrho_{L}}\gamma.
		\end{align*}
		First, let $m = 0$. Evaluating the integral over $x_{1}$, we obtain
		\begin{align*}
		\bil{P_{0,y}}{\Theta_{KM,D}(\tau,\cdot)} &= -\frac{1}{2D}\sum_{n=1}^{\infty}n^{2}\left(\frac{D}{n}\right)\sum_{\gamma \in \widetilde{\Gamma}_{\infty}\setminus \widetilde{\Gamma}}\left(v^{-\frac32}\int_{y}^{\infty}\exp\left(-\pi\frac{ y_{1}^{2}n^{2}}{vD}\right) y_{1} dy_{1}\e_{0}\right)\bigg|_{\frac32,\varrho_{L}}\gamma \\
		&= -\frac{1}{4\pi}\sum_{n=1}^{\infty}\left(\frac{D}{n}\right)\sum_{\gamma \in \widetilde{\Gamma}_{\infty}\setminus \widetilde{\Gamma}}\bigg(v^{-\frac12}\exp\left(-\pi\frac{y^{2}n^{2}}{vD} \right)\e_{0}\bigg)\bigg|_{\frac32,\varrho_{L}}\gamma.
		\end{align*}
		Next, for $m \neq 0$, we compute the integral over $x_{1}$ and obtain
		\begin{align*}
		&\bil{P_{m,y}}{\Theta_{KM,D}(\tau,\cdot)}\\
		&= -\frac{1}{2D}\sum_{n \mid m}n^{2}\left(\frac{D}{n}\right)\sum_{\gamma \in \widetilde{\Gamma}_{\infty}\setminus \widetilde{\Gamma}}
		 \quad \bigg(v^{-\frac32}e\left(-D\frac{m^{2}}{4n^{2}}\overline{\tau} \right)\int_{y}^{\infty}\exp\left(2\pi m y_{1}-\pi\frac{\ y_{1}^{2}n^{2}}{vD}\right) y_{1} dy_{1}\e_{D\frac{m}{n}}\bigg)\bigg|_{\frac32,\varrho_{L}}\gamma.
		\end{align*}
		Replacing $n$ by $\frac{|m|}{n}$ yields the stated formulas.
\end{proof}

From Proposition~\ref{prop:Lsharpint} we know that $\Theta_{KM,D}^{\#,z}(\tau,z)$ is a smooth function in $z$ for each fixed $\tau$.
We now show that it is actually smooth in $\tau$ and $z$.

\begin{proposition} \label{prop:KMsharp-smooth}
  The function $\tau\mapsto \Theta_{KM,D}^{\#,z}(\tau,z)$ is smooth and can be differentiated termwise.
\end{proposition}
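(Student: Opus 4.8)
The plan is to use the explicit Fourier expansion from Theorem~\ref{thm:KMD} and to verify smoothness and termwise differentiability by a Weierstrass $M$-test applied simultaneously to the series and to all of its term-by-term partial derivatives in $\tau$ and $z$. Each individual summand is smooth on $\H \times \H$: the building blocks are $\mathcal H$, the exponential/$\erfc$ special function $\K_w(v)$ (cf.\ the Remark following Theorem~\ref{thm:KMD}), the weakly holomorphic forms $g_{Dn^{2}}$, and the Poincar\'e series $\widetilde g_0$ and $\widetilde g_{Dn^{2}}$, all of which are manifestly smooth. I would invoke the standard criterion from analysis: if a series of smooth functions converges pointwise and, for every differential operator $\partial = \partial_u^{a}\partial_v^{b}\partial_x^{c}\partial_y^{d}$, the series obtained by applying $\partial$ to each summand converges locally uniformly, then the sum is smooth and $\partial$ commutes with the summation. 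Thus everything reduces to producing, for each compact $K \subset \H \times \H$ and each $\partial$, a summable majorant for $\abs{\partial(\text{summand})}$ on $K$.

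First I would dispose of the $m=0$ part $2\delta_{D=1}\mathcal H(\tau) + \widetilde g_0(\tau,y)$, which is a single smooth function; the convergence of the Poincar\'e series $\widetilde g_0$ together with its derivatives follows from the Gaussian factor $\exp(-\pi n^{2}y^{2}/(vD))$ and the standard estimate for $\Im(\gamma\tau)$ over $\widetilde\Gamma_\infty\backslash\widetilde\Gamma$. For the terms with $m \neq 0$ the key is the interplay between the factor $e(mz)$, of absolute value $e^{-2\pi m y}$, and the size of the coefficients. When $m<0$ the factor $e(mz)$ grows like $e^{2\pi\abs{m}y}$, but substituting $y\mapsto my<0$ into $\K$ forces the argument $\tfrac{my}{n\sqrt{Dv}}-n\sqrt{Dv}\to-\infty$, so by the $\erfc$ representation each $\widetilde g_{Dn^{2}}(\tau,my)$ decays like a Gaussian in $\abs{m}$; this Gaussian decay beats $e^{2\pi\abs{m}y}$ and all polynomial factors introduced by $\partial$, while the divisor sum over $n\mid m$ and the inner Poincar\'e summation cost only polynomially, so these terms are harmlessly summable.

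The main obstacle is the range $m>0$, where the holomorphic form $g_{Dn^{2}}(\tau)$ grows like $e^{2\pi Dn^{2}v}$ as $v\to\infty$; since $n$ may be as large as $m$, the decay $e^{-2\pi m y}$ from $e(mz)$ does \emph{not} by itself control this growth. The resolution, which is the heart of the proof, is that the completion $\widetilde g_{Dn^{2}}(\tau,my)$ contains a term matching the exponentially growing part of $g_{Dn^{2}}(\tau)$, so that the difference $\widetilde g_{Dn^{2}}(\tau,my)-g_{Dn^{2}}(\tau)$ decays rapidly. To make this precise I would use the $\erfc$ representation of $\K$ to split $\widetilde g_{Dn^{2}}(\tau,my)$ into a main term plus a Gaussian error; comparing the main term with the Fourier expansion \eqref{eq:gD} of $g_{Dn^{2}}$ shows that the growing contributions cancel, leaving a remainder that decays exponentially in $my$. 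Quantifying this cancellation uniformly in $m$ and in the divisors $n\mid m$ requires uniform control on the coefficients $B(Dn^{2},d)$ of the family $\{g_{Dn^{2}}\}_n$ of weakly holomorphic forms, which is precisely what the growth estimates in the appendix provide.

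Finally, I would assemble these bounds: on a fixed compact $K$, the $m\neq 0$ summands, and each of their $\partial$-derivatives (which only introduce polynomial factors in $m$, $n$, and the Poincar\'e variable), are dominated by a summable majorant -- Gaussian in $\abs{m}$ for $m<0$ and exponentially decaying in $m$ for $m>0$ after the cancellation -- uniformly on $K$. The Weierstrass $M$-test then yields local uniform convergence of the series and of all its term-by-term derivatives, proving that $\tau\mapsto\Theta_{KM,D}^{\#,z}(\tau,z)$ is smooth and may be differentiated termwise. I expect the cancellation step together with the uniform family estimates (the appendix input) to be by far the most delicate part, the convergence of the inner Poincar\'e series and the bookkeeping of polynomial factors from differentiation being routine by comparison.
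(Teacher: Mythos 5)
Your proposal follows essentially the same route as the paper's own proof: the paper likewise starts from the explicit expansion in Theorem \ref{thm:KMD}, invokes the appendix estimates (Propositions \ref{prop:gest} and \ref{prop:gtest}) to bound the coefficients $a(\tau;m,y)$ and all their iterated $u$-, $v$-derivatives --- the crucial cancellation for $m>0$ being implemented there by subtracting the \emph{same} truncated Poincar\'e series from both $g_{Dn^{2}}$ and $\widetilde g_{Dn^{2}}$ rather than by comparing Fourier coefficients $B(Dn^{2},d)$ directly --- and then concludes normal convergence of the series together with all termwise derivatives. One minor imprecision in your write-up: the completed difference does not actually decay; it still grows like $O_{K}\bigl(m^{4}e^{\frac{3}{2}\pi m y}\bigr)$, which nevertheless suffices because the factor $e(mz)$ contributes $e^{-2\pi m y}$ and $\tfrac{3}{2}\pi<2\pi$.
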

\begin{proof}
  Recall that the $m$-th Fourier coefficient of $z\mapsto\Theta_{KM,D}^{\#,z}(\tau,z)$ is given by
  \[
    a(\tau; m, y) := 2 \sqrt{D} \sum_{n \mid m} \legendre{D}{m/n} n \left(g_{Dn^2}(\tau)\delta_{n,>0} - \widetilde{g}_{Dn^2}(\tau, my) \right).
  \]
  By Propositions \ref{prop:gest} and \ref{prop:gtest} of the appendix, we have, on any compact $K \subset \uhp \times \uhp$
  for $m > 0$ that
  \[
    a(\tau; m, y) = O_K\left(m^4 \exp\left( \frac{3}{2}\pi m y \right)\right)
  \]
  and similarly for the iterated partial derivatives in $u$ and $v$.
  For $m < 0$ we have that $a(\tau; m, y)$ and all iterated partial derivatives in $u$ and $v$ decay exponentially as $m \to -\infty$.
  This implies that for all $\alpha, \beta \in \N_{0}$, the series
  \[
    \sum_{m}\left(\frac{\partial^\alpha}{\partial u^\alpha}\frac{\partial^\beta}{\partial v^\beta} a(\tau; m, y) \right) e(mz)
  \]
  converges normally on $\uhp$ and thus defines a smooth function in $\tau \in \uhp$ and we have
  \[
   \frac{\partial^\alpha}{\partial u^\alpha}\frac{\partial^\beta}{\partial v^\beta} \sum_{m}a(\tau; m, y) e(mz)
    = \sum_{m} \left(\frac{\partial^\alpha}{\partial u^\alpha}\frac{\partial^\beta}{\partial v^\beta} a(\tau; m, y)\right) e(mz).
  \]
  This finishes the proof.
\end{proof}

We are now ready to prove how $\Theta_{KM,D}^{\#,z}$
behaves under the lowering operators in $\tau$ and $z$,
as well as a differential equation involving the Laplace operators
in both variables.

\begin{proposition}\label{prop: differential equations KudlaMillson theta}
The function $\Theta_{KM,D}^{\#,z}$ satisfies the differential equations
\begin{align}\label{eq Delta ThetaKM}
4\Delta_{\frac{3}{2},\tau}\left( \Theta_{KM,D}^{\#,z}(\tau,z) \right)&= \Delta_{2,z}\left(\Theta_{KM,D}^{\#,z}(\tau,z) \right) ,\\
L_{\frac{3}{2},\tau}\left(\Theta_{KM,D}^{\#,z}(\tau,z)\right) &= -\frac{1}{4\pi}R_{0,z}\left(\Theta_{S,D}(\tau,z)\right), \label{eq L32 ThetaKM}\\
L_{2,z}\left(\Theta_{KM,D}^{\#,z}(\tau,z)\right) &= \Theta_{KM,D}(\tau,z).\label{eq L2 ThetaKM}
\end{align}
\end{proposition}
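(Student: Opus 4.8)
The plan is to establish the three identities in the order \eqref{eq L2 ThetaKM}, \eqref{eq L32 ThetaKM}, \eqref{eq Delta ThetaKM}, deriving the last formally from the first two. The identity \eqref{eq L2 ThetaKM} is immediate from the construction: $\Theta_{KM,D}^{\#,z}(\tau,\cdot)$ is by definition the distinguished $L_{2,z}$-preimage produced by Proposition \ref{prop:Lsharpint} applied with $k=0$ to $\Theta_{KM,D}(\tau,\cdot)\in\Amod{0}$ (the hypothesis $\bil{\cdot}{G}=0$ for $G\in S_{0}$ is vacuous since $S_{0}=\{0\}$), so $L_{2,z}(\Theta_{KM,D}^{\#,z})=\Theta_{KM,D}$ holds by construction.

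For \eqref{eq L32 ThetaKM} I would first invoke Proposition \ref{prop:KMsharp-smooth} to differentiate $\Theta_{KM,D}^{\#,z}$ termwise in $\tau$. Since $L_{2,z}=-2iy^{2}\partial_{\overline z}$ and $L_{\frac32,\tau}=-2iv^{2}\partial_{\overline\tau}$ are differential operators in the independent variable pairs $(z,\overline z)$ and $(\tau,\overline\tau)$, they commute on $\Theta_{KM,D}^{\#,z}$, and together with \eqref{eq L2 ThetaKM} and Proposition \ref{prop: differential equations}(i) this gives
\[
L_{2,z}\bigl(L_{\frac32,\tau}\Theta_{KM,D}^{\#,z}\bigr)=L_{\frac32,\tau}\Theta_{KM,D}=\tfrac{1}{4\pi}\Delta_{0,z}\Theta_{S,D}.
\]
By \eqref{eq laplace via lowering raising} in weight zero, $\Delta_{0,z}=-L_{2,z}R_{0,z}$, so the right-hand side equals $L_{2,z}\bigl(-\tfrac{1}{4\pi}R_{0,z}\Theta_{S,D}\bigr)$. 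Hence
\[
\Phi:=L_{\frac32,\tau}\Theta_{KM,D}^{\#,z}+\tfrac{1}{4\pi}R_{0,z}\Theta_{S,D}
\]
lies in $\ker L_{2,z}$, i.e. $\Phi(\tau,\cdot)$ is holomorphic of weight two for $\Gamma$.

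The main obstacle is to show $\Phi=0$. Here I would argue as follows. The term $R_{0,z}\Theta_{S,D}$ has moderate growth, since $\Theta_{S,D}(\tau,\cdot)\in\Amod{0}$ by Proposition \ref{prop: transformation rules theta functions}(i) and $R_{0,z}$ is a single $z$-derivative; in particular its coefficients $\kappa(m)$ for $m<0$ vanish. The term $L_{\frac32,\tau}\Theta_{KM,D}^{\#,z}$ inherits the at most linear exponential growth of $\Theta_{KM,D}^{\#,z}\in\Aexp{2}$, since differentiating in $\tau$ does not worsen the growth in $y$; thus $\Phi$ is a holomorphic weight-two form of finite order at the cusp, i.e. weakly holomorphic. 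It remains to check that its principal part vanishes, for which one needs $\kappa_{L_{\frac32,\tau}\Theta_{KM,D}^{\#,z}}(m)=0$ for $m<0$. This follows from $\kappa_{\Theta_{KM,D}^{\#,z}}(m)=0$ (Proposition \ref{prop:Lsharpint}(ii)) by interchanging $\lim_{y\to\infty}$ with $L_{\frac32,\tau}$ --- the delicate point --- which is justified by the uniform estimates underlying Proposition \ref{prop:KMsharp-smooth}, ultimately the appendix bounds of Propositions \ref{prop:gest} and \ref{prop:gtest} applied to the Fourier expansion of Theorem \ref{thm:KMD}. A holomorphic weight-two form with vanishing principal part lies in $M_{2}(\Gamma)=\{0\}$, so $\Phi(\tau,\cdot)=0$ for every $\tau$, proving \eqref{eq L32 ThetaKM}.

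Finally, \eqref{eq Delta ThetaKM} follows formally from the first two identities via \eqref{eq laplace via lowering raising} in the form $\Delta_{k}=-R_{k-2}L_{k}$. On the one hand, \eqref{eq L2 ThetaKM} gives $\Delta_{2,z}\Theta_{KM,D}^{\#,z}=-R_{0,z}L_{2,z}\Theta_{KM,D}^{\#,z}=-R_{0,z}\Theta_{KM,D}$. On the other hand, \eqref{eq L32 ThetaKM}, the commutativity of $R_{-\frac12,\tau}$ with $R_{0,z}$, and the relation $R_{-\frac12,\tau}\Theta_{S,D}=-\pi\Theta_{KM,D}$ from Proposition \ref{prop: differential equations}(i) yield
\[
\Delta_{\frac32,\tau}\Theta_{KM,D}^{\#,z}=-R_{-\frac12,\tau}L_{\frac32,\tau}\Theta_{KM,D}^{\#,z}=\tfrac{1}{4\pi}R_{0,z}R_{-\frac12,\tau}\Theta_{S,D}=-\tfrac14R_{0,z}\Theta_{KM,D}.
\]
Multiplying by four matches the two expressions, completing the proof.
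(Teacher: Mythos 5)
Your proposal is correct and follows essentially the same route as the paper: \eqref{eq L2 ThetaKM} holds by construction, \eqref{eq L32 ThetaKM} is proved by showing that both sides are $L_{2,z}$-preimages of $L_{\frac32,\tau}\left(\Theta_{KM,D}\right)$ via Proposition \ref{prop: differential equations} (i) and the factorization $\Delta_{0,z}=-L_{2,z}\circ R_{0,z}$, and then concluding from $M_{2}=\{0\}$ after checking that the difference is holomorphic with vanishing principal part, while \eqref{eq Delta ThetaKM} follows by the identical raising/lowering computation. The only cosmetic difference is that you obtain the preimage property by commuting $L_{\frac32,\tau}$ with $L_{2,z}$ and verify the vanishing of the negative-index limit coefficients by an interchange of limits, whereas the paper pushes $L_{\frac32,\tau}$ through the expansion $\sum_{m}\bil{F_{m}-P_{m,y}}{\cdot}e(mz)$ and reads these normalizations off Proposition \ref{prop:Lsharpint}; both variants rest on the same smoothness and termwise-differentiation input (Proposition \ref{prop:KMsharp-smooth} and the appendix estimates).
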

\begin{proof}
	The formula \eqref{eq L2 ThetaKM} is clear from the construction of $\Theta_{KM,D}^{\#,z}(\tau,z)$.

	We next prove \eqref{eq L32 ThetaKM}. For this, we first write
	\begin{align*}
	L_{\frac{3}{2},\tau}\left(\Theta_{KM,D}^{\#,z}(\tau,z)\right) &= \sum_{m \in \Z}\bil{F_{m}-P_{m,y}}{L_{\frac{3}{2},\tau}\left(\Theta_{KM,D}(\tau,\cdot)\right)}e(mz).
	\end{align*}
        Thus, Proposition \ref{prop:Lsharpint} implies that $L_{\frac{3}{2},\tau}(\Theta_{KM,D}^{\#,z}(\tau,z))$
        is a $L_{2,z}$-preimage of $L_{\frac{3}{2},\tau}\left(\Theta_{KM,D}(\tau,z)\right)$.
	By Proposition~\ref{prop: differential equations} and \eqref{eq laplace via lowering raising}, we have
	\[
	L_{\frac{3}{2},\tau}\left(\Theta_{KM,D}(\tau,z)\right) = \frac{1}{4\pi}\Delta_{0,z} \left(\Theta_{S,D}(\tau,z)\right) = -\frac{1}{4\pi}L_{2,z} \circ R_{0,z}\left(\Theta_{S,D}(\tau,z)\right).
	\]
        Hence, the function $\frac{-1}{4\pi}R_{0,z}(\Theta_{S,D}(\tau,z))$ is also an
        $L_{2,z}$-preimage of $L_{\frac{3}{2},\tau}\left(\Theta_{KM,D}(\tau,z)\right)$.
        Consequently, these two functions can only differ by a weakly homomorphic modular form but since $R_{0,z}(\Theta_{S,D}(\cdot,z)) \in \Amod{2}$
        and $M_2 = \{0\}$, they actually agree.

	Next we show \eqref{eq Delta ThetaKM}. Using Proposition~\ref{prop: differential equations} and \eqref{eq laplace via lowering raising}, \eqref{eq L32 ThetaKM}, and \eqref{eq L2 ThetaKM}, we compute
	\begin{align*}
	\Delta_{2,z}\left(\Theta_{KM,D}^{\#,z}(\tau,z) \right) &= -R_{0,z}\circ L_{2,z}\left(\Theta_{KM,D}^{\#,z}(\tau,z)\right)
	=-R_{0,z}(\Theta_{KM,D}(\tau,z)) \\
	&=\frac{1}{\pi }R_{0,z}\circ R_{-\frac{1}{2},\tau}\left(\Theta_{S,D}(\tau,z)\right)=-4 R_{-\frac{1}{2},\tau}\circ L_{\frac{3}{2},\tau}\left(\Theta_{KM,D}^{\#,z}(\tau,z) \right) \\
	&= 4 \Delta_{\frac{3}{2},\tau}\left( \Theta_{KM,D}^{\#,z}(\tau,z)\right),
	\end{align*}
	completing the proof.
\end{proof}

By Proposition~\ref{prop:Lsharphalfint} we have that
\begin{align*}
(R_{0,z}(\Theta_{S,D}))^{\#,\tau}(\tau,z) &:= \sum_{d \in \Z}\bil{ f_{d}-\mathcal{P}_{d,v}}{R_{0,z}\left(\Theta_{S,D}(\cdot,z)\right)}e\left(\frac{d\tau}{4}\right)\e_{d}
\end{align*}
is a $L_{\frac{3}{2},\tau}$-preimage of $R_{0,z}(\Theta_{S,D}(\tau,z))$. By construction, it transforms like a modular form of weight $\frac{3}{2}$ for $\varrho_{L}$ in $\tau$ and is smooth in $\tau$. Furthermore, itis modular of weight $2$ for $\Gamma$ in $z$.

 We have the following explicit formula for $(R_{0,z}(\Theta_{S,D}))^{\#,\tau}(\tau,z)$.

\begin{theorem}\label{thm:LpreimR0Siegel}
	We have the Fourier expansion
	\[
	(R_{0}(\Theta_{S,D}))^{\#,\tau}(\tau,z) = \sum_{d \in \Z} F_{d,D}^*(z; v)e\left(\frac{d\tau}{4}\right)\e_{d},
	\]
    where
	\begin{equation*}
	F_{d,D}^*(z; v) := F_{d,D}(z)
         + 2\sum_{\lambda \in L_{\frac{dD}{4}} \setminus\{0\}}\chi_{D}(\lambda)\frac{p_{\lambda}(z)}{Q_{\lambda}(z)}
          \exp\left(-\frac{2\pi v R(\lambda,z)}{D}\right),
	\end{equation*}
	with
	\begin{align*}
	F_{d,D}(z) := \begin{dcases}
	-4i \sum\limits_{\substack{Q \in \Gamma \setminus \mathcal{Q}_{-dD} \\ Q > 0}}\frac{\chi_{D}(Q)}{\omega_{Q}}\frac{j'(z)}{j(z)-j(z_{Q})}\quad& \textnormal{if } d > 0,\\
	\frac{2\pi}{3}E_{2}^*(z)\quad& \textnormal{if $d = 0$ and $D = 1$}, \\
	0 \quad& \textnormal{if $d < 0$, or if $d = 0$ and $D > 1$} .
	\end{dcases}
	\end{align*}
	Here the subscript $Q > 0$ indicates that we only sum over positive definite quadratic forms $Q$. If $z_{Q}$ is a CM point then the value $(R_{0}(\Theta_{S,D}))^{\#,\tau}(\tau,z_{Q})$ is obtained by taking the limit $z \to z_{Q}$ in the expression given for $F_{d,D}^*(z; v)$.
	\end{theorem}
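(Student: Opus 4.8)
The plan is to evaluate directly the Fourier coefficients
\[
F_{d,D}^*(z;v) = \bil{f_d - \mathcal{P}_{d,v}}{R_{0,z}(\Theta_{S,D}(\cdot,z))}
\]
supplied by Proposition~\ref{prop:Lsharphalfint} (applied with $k = -\frac12$, so that the preimage has weight $k+2 = \frac32$ in $\tau$), in the same spirit as the proof of Theorem~\ref{thm:KMD}. I would split
\[
F_{d,D}^*(z;v) = \bil{f_d}{R_{0,z}(\Theta_{S,D}(\cdot,z))} - \bil{\mathcal{P}_{d,v}}{R_{0,z}(\Theta_{S,D}(\cdot,z))},
\]
treating the two summands by a theta lift and a Poincar\'e-series unfolding, respectively.

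For the first summand, since $R_{0,z}$ differentiates only in $z$ it commutes with the regularized integral over $\tau$, so $\bil{f_d}{R_{0,z}\Theta_{S,D}} = R_{0,z}\bil{f_d}{\Theta_{S,D}(\cdot,z)}$, where $\bil{f_d}{\Theta_{S,D}(\cdot,z)}$ is the regularized (twisted) Siegel theta lift of the weight-$\frac12$ weakly holomorphic form $f_d$, that is, Borcherds' singular theta lift. I would invoke its known evaluation (\cite{borcherds,bruinierono}, and \cite{alfesehlen} for the twist by $\chi_{D}$): for $d > 0$ it equals, up to an additive constant, the automorphic Green function of the twisted CM divisor $\sum_Q\frac{\chi_D(Q)}{\omega_Q}z_Q$; since $f_d$ has vanishing constant term the associated Borcherds form has weight zero, so $R_{0,z} = 2i\frac{\partial}{\partial z}$ returns (up to the normalization computed there) its holomorphic logarithmic derivative $F_{d,D}(z)$. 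For $d = 0$ the Kronecker limit formula gives $\frac{2\pi}{3}E_2^*(z)$ if $D = 1$ and $0$ if $D > 1$, while for $d < 0$ one has $f_d = 0$.

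For the second summand I would unfold $\mathcal{P}_{d,v}$ against $R_{0,z}\Theta_{S,D}$ as in Theorem~\ref{thm:KMD}: the sum over $\widetilde{\Gamma}_\infty\backslash\widetilde{\Gamma}$ collapses the regularized integral to the strip, the $u_1$-integral selects the frequency $Q(\lambda) = \frac{dD}{4}$ and hence the vectors $\lambda\in L_{\frac{dD}{4}}$, and the cutoff leaves $\int_v^\infty(\cdots)\,dv_1$. The decisive point is that $R_{0,z}$ acting on the Gaussian contributes the extra factor $-\frac{4\pi i v_1}{D}\,\partial_z R(\lambda,z)$, which together with the prefactor $v_1$ of $\varphi_{S,D}^0$ and the invariant measure $v_1^{-2}$ cancels all powers of $v_1$; the remaining integral is the clean exponential
\[
\int_v^\infty e^{-2\pi v_1 R(\lambda,z)/D}\,dv_1 = \frac{D}{2\pi R(\lambda,z)}\,e^{-2\pi v R(\lambda,z)/D}.
\]
The elementary identity $Q_\lambda'(z) + \frac{i}{y}Q_\lambda(z) = -i\,p_\lambda(z)$ (a consequence of $R(\lambda,z) = \frac{|Q_\lambda(z)|^2}{2y^2}$) then gives $\frac{\partial_z R(\lambda,z)}{R(\lambda,z)} = -i\frac{p_\lambda(z)}{Q_\lambda(z)}$, so that $\bil{\mathcal{P}_{d,v}}{R_{0,z}\Theta_{S,D}} = -2\sum_{\lambda\in L_{dD/4}\setminus\{0\}}\chi_D(\lambda)\frac{p_\lambda(z)}{Q_\lambda(z)}e^{-2\pi v R(\lambda,z)/D}$, contributing exactly the stated correction term to $F_{d,D}^*$. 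The appearance here of a clean Gaussian, rather than the incomplete integrals $\K_w$ of Theorem~\ref{thm:KMD}, is explained precisely by this extra power of $v_1$ coming from the raising operator.

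Finally, $\Aexp{\frac32,\varrho_L}$-membership and smoothness in $\tau$ are guaranteed by Proposition~\ref{prop:Lsharphalfint}; smoothness in $z$ across the CM points $z_Q$, at which $F_{d,D}$ and the correction term each have simple poles, follows because these singular contributions cancel, the value at $z_Q$ then being the stated limit. I would deduce this cancellation a posteriori from the uniqueness in Proposition~\ref{prop:Lsharphalfint} together with \eqref{eq L32 ThetaKM}, which identifies $(R_0(\Theta_{S,D}))^{\#,\tau}$ up to a constant with $\Theta_{KM,D}^{\#,z}$, a function that is manifestly smooth in $z$. The main obstacle I anticipate is the first step: determining the regularized twisted Siegel lift of $f_d$ with the correct normalizing constants and its behaviour along the CM divisor, and rigorously justifying differentiation of the $\CT_{s=0}$-regularized integral under $R_{0,z}$ in a neighbourhood of the poles.
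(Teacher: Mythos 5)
Your proposal follows the paper's proof almost verbatim in its main computation: the paper likewise splits the coefficient as $\bil{f_d}{R_{0,z}\Theta_{S,D}}-\bil{\mathcal{P}_{d,v}}{R_{0,z}\Theta_{S,D}}$, evaluates the first term by the twisted Borcherds/Siegel lift of Bruinier--Ono (obtaining $-4\sum_Q\frac{\chi_D(Q)}{\omega_Q}\log|j(z)-j(z_Q)|$ for $d>0$, the $\eta$-expression for $d=0$, $D=1$, and a constant for $d=0$, $D>1$) and then applies $R_{0,z}$, and evaluates the second term by unfolding; whether you unfold first and then raise (as the paper does) or raise first and then unfold (as you do) is immaterial, and your computation of the resulting clean exponential integral, including the identity $\partial_z R(\lambda,z)/R(\lambda,z)=-i\,p_\lambda(z)/Q_\lambda(z)$, matches the paper's. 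Where you genuinely diverge is the final step at the CM points. The paper argues directly and self-containedly: the stated expression for $F^*_{d,D}(z;v)$ visibly extends to a real-analytic function across the CM points, while the unsplit inner product $\bil{f_d-\mathcal{P}_{d,v}}{R_{0,z}\Theta_{S,D}}$ is real analytic on all of $\H$ because $f_d-\mathcal{P}_{d,v}$ has moderate growth (in fact decay) at the cusp, so the two functions, agreeing off a discrete set, agree everywhere. You instead deduce smoothness a posteriori from the identification of $(R_{0,z}(\Theta_{S,D}))^{\#,\tau}$ with $-4\pi\,\Theta_{KM,D}^{\#,z}$; this is exactly the paper's Proposition~\ref{prop:KMDsharp=RSsharp}, whose proof (difference is holomorphic in $\tau$ by \eqref{eq L32 ThetaKM} and the construction, has vanishing negative-index limits, hence lies in $M_{\frac32,\varrho_L}=\{0\}$) does not use Theorem~\ref{thm:LpreimR0Siegel}, so your argument is not circular. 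The trade-off is that your route makes the theorem depend on Theorem~\ref{thm:KMD} and on the smoothness statement Proposition~\ref{prop:KMsharp-smooth} (whose proof needs the delicate growth estimates of the appendix), whereas the paper's moderate-growth argument is lighter and keeps the theorem independent of the Kudla--Millson side; on the other hand, your route spares you from checking the pole cancellation between $F_{d,D}$ and the correction sum by hand, since continuity of the identified function forces the limits to exist and to give the values. One small caveat: for your a posteriori step you need continuity of the $\tau$-Fourier coefficients of $\Theta_{KM,D}^{\#,z}$ in $z$, which requires joint continuity in $(\tau,z)$ (Proposition~\ref{prop:KMsharp-smooth}), not merely the fixed-$\tau$ smoothness in $z$ that is ``manifest'' from the construction. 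Finally, the interchange of $R_{0,z}$ with the regularized integral, which you flag as the main obstacle, is present (implicitly) in the paper's proof as well and is handled there by the same splitting, since each of the two pieces is evaluated explicitly.
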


\begin{proof}
We first compute the twisted Borcherds lift $\bil{f_{d}}{\Theta_{S,D}(\, \cdot \, ,z)}$ of $f_{d}$. Although it is certainly well-known, we sketch the idea of the computation for the convenience of the reader. By \cite[Theorem 6.1]{bruinierono}, for $z \in \H$ not being a CM point of discriminant $-dD$ in the case $d > 0$, we have the formula
\begin{align*}
\bil{f_{d}}{\Theta_{S,D}(\, \cdot \, ,z)} = \begin{cases} 
-4\log\left|\Psi_{D}(f_{d},z)y^{\frac{c_{f_{d}}(0)}{2}}\right|-c_{f_{d}}(0)(\log(4\pi)+\Gamma'(1)) & \textnormal{if } D = 1, \\
-4\log|\Psi_{D}(f_{d},z)|+2\sqrt{D}c_{f_{d}}(0)L_{D}(1) & \textnormal{if } D > 1, 
\end{cases}
\end{align*}
where $c_{f_{d}}(0)$ denotes the constant coefficient of $f_{d}$ and $\Psi_{D}(f_{d},z)$ is the twisted Borcherds product associated to $f_{d}$, which is defined in \cite[Theorem~6.1]{bruinierono}. Furthermore, $L_{D}(s)$ denotes the usual Dirichlet $L$-function associated to the character of the real quadratic field $\Q(\sqrt{D})$. Recall that $f_{d} = 0$ for $d< 0$, $f_{0} = \theta$ is the Jacobi theta function, and $f_{d} = q^{-d} + O(q)$ for $d > 0$. For $d = 0$ we use the explicit product expansion of the twisted Borcherds product to obtain $\Psi_{1}(\theta,z) = \eta^{2}(z)$ and $\Psi_{D}(\theta,z) = 0$ for $D > 1$. For $d > 0$ we know from \cite[Theorem~6.1]{bruinierono} that $\Psi_{D}(f_{d},z)$ is the unique normalized meromorphic modular form of weight $0$ for $\SL_{2}(\Z)$ having roots or poles of order $\chi_{D}(Q)$ at the CM points $z_{Q}$ for $Q \in \calQ_{-dD}$. This implies that
\[
\Psi_{D}(f_{d},z) = \prod_{\substack{Q \in \Gamma \setminus \mathcal{Q}_{-dD} \\ Q > 0}}(j(z)-j(z_{Q}))^{\frac{\chi_{D}(Q)}{w_{Q}}}.
\]
Altogether, we obtain the formula
\begin{align*}
\bil{f_{d}}{\Theta_{S,D}(\, \cdot \, ,z)} = \begin{cases}
	0\quad& \textnormal{if } d < 0, \\
	-4\log\left|\eta^{2}(z)y^{\frac12}\right|-\log(4\pi)-\Gamma'(1)\quad& \textnormal{if }d = 0 \textnormal{ and } D = 1, \\
	2\sqrt{D}L(1,\chi_{D}) &\textnormal{if }d = 0 \textnormal{ and } D > 1, \\
	-4 \sum\limits_{\substack{Q \in \Gamma \setminus \mathcal{Q}_{-dD} \\ Q > 0}}\frac{\chi_{D}(Q)}{\omega_{Q}}\log\left|j(z) - j(z_{Q})\right| & \textnormal{if } d > 0.
\end{cases}
\end{align*}
For $d =0$ and $D  =1$, we obtain
\[
R_{0}\left( -4\log\left|\eta^{2}(z)y^{\frac12}\right|-\log(4\pi)-\Gamma'(1)\right)  = \frac{2\pi}{3} E_{2}^*(z).
\]
For $d > 0$ and $z$ not a CM point of discriminant $-dD$, we have
\begin{align*}
R_{0}\left(-4 \sum_{Q \in \Gamma \setminus \mathcal{Q}_{-dD}}{\frac{\chi_{D}(Q)}{\omega_{Q}}}\log\left|j(z) - j(z_{Q})\right|\right) &= -4i\sum_{\substack{Q \in \Gamma \setminus \mathcal{Q}_{-dD} \\ Q > 0}}\frac{\chi_{D}(Q)}{\omega_{Q}}\frac{j'(z)}{j(z) - j(z_{Q})}.
\end{align*}

We next assume that $z \in \H$ and suppose that it is not a CM point of discriminant $-dD$. Then we compute, by unfolding,
\begin{align*}
\bil{\mathcal{P}_{d,v}}{\Theta_{S,D}(\, \cdot \,,z)} &= \int_{v}^{\infty}\int_{0}^{1}e\left(-\frac{d\tau_{1}}{4}\right)\Theta_{S,D}(\tau_{1},z)\frac{du_{1}  dv_{1}}{v_{1}^{2}} \\
&= \sum_{\lambda \in L_{\frac{dD}{4}}\setminus \{0\}}\chi_{D}(\lambda)\int_{v}^{\infty} \exp\left(-\frac{2\pi v_{1} R(\lambda,z)}{D}\right)\frac{dv_{1}}{v_{1}}.
\end{align*}
Applying the raising operator $R_{0}$ on both sides, and using the identities
\[
\frac{\partial}{\partial z}R(\lambda,z) = -\frac{i}{2y^2}p_{\lambda}(z)Q_{\lambda}(\overline{z}), \qquad 2R(\lambda,z) = y^{-2}|Q_{\lambda}(z)|^{2},
\]
we obtain
\begin{align*}
\bil{\mathcal{P}_{d,v}}{R_{0,z}\left(\Theta_{S,D}(\, \cdot \,,z)\right)} &= -2\sum_{\lambda \in L_{\frac{dD}{4}}\setminus\{0\}}\chi_{D}(\lambda)\frac{p_{\lambda}(z)}{Q_{\lambda}(z)}\exp\left(-\frac{2\pi v R(\lambda,z)}{D} \right).
\end{align*}
This gives the stated formula for $F^*_{d,D}(z; v)$ if $z$ is not a CM point. On the other hand, it is not hard to see that for fixed $d$ the expression given for $F^*_{d,D}(z; v)$ in the statement of the theorem actually extends to a real analytic function in $z$ on $\H$ if we define its value at a CM point $z_{Q}$ of discriminant $-dD$ by taking the limit $z \to z_{Q}$. The function $\bil{f_{d}-\mathcal{P}_{d,v}}{R_{0,z}\left(\Theta_{S,D}(\, \cdot \, ,z)\right)}$ is real analytic in $z$ on $\H$ since $f_{d} - \mathcal{P}_{d,v}$ is of moderate growth at the cusp $i\infty$. Hence it agrees with
the formula given for $F_{d,D}^*(z; v)$ on all of $\H$. This finishes the proof.
\end{proof}

The following proposition shows that $\Theta_{KM,D}^{\#,z}(\tau,z)$ and $(R_{0,z}(\Theta_{S,D}))^{\#,\tau}(\tau,z)$
agree up to a constant and therefore Theorem \ref{thm:LpreimR0Siegel} in fact gives the Fourier expansion in $\tau$
of $\Theta_{KM,D}^{\#,z}(\tau,z)$.
\begin{proposition} \label{prop:KMDsharp=RSsharp} The identity
\[
  \Theta_{KM,D}^{\#,z}(\tau,z) = -\frac{1}{4\pi}(R_{0,z}(\Theta_{S,D}))^{\#,\tau}(\tau,z)
\]
holds.
\end{proposition}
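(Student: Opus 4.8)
The plan is to exhibit both sides as $L_{\frac32,\tau}$-preimages of one and the same form and then to invoke the uniqueness in Proposition~\ref{prop:Lsharphalfint}. By \eqref{eq L32 ThetaKM} we have $L_{\frac32,\tau}(\Theta_{KM,D}^{\#,z}) = -\frac{1}{4\pi}R_{0,z}(\Theta_{S,D})$, while by construction $L_{\frac32,\tau}((R_{0,z}(\Theta_{S,D}))^{\#,\tau}) = R_{0,z}(\Theta_{S,D})$. Hence, for each fixed $z$, both $\Theta_{KM,D}^{\#,z}(\cdot,z)$ and $-\frac{1}{4\pi}(R_{0,z}(\Theta_{S,D}))^{\#,\tau}(\cdot,z)$ lie in $\Aexp{\frac32,\varrho_{L}}$ and map to the same element $-\frac{1}{4\pi}R_{0,z}(\Theta_{S,D}) \in \Amod{-\frac12,\varrho_{L}}$ under $L_{\frac32,\tau}$; membership of the left-hand side in $\Aexp{\frac32,\varrho_{L}}$ follows from the smoothness established in Proposition~\ref{prop:KMsharp-smooth} together with the growth estimates of the appendix. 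Consequently the difference $\Phi := \Theta_{KM,D}^{\#,z} + \frac{1}{4\pi}(R_{0,z}(\Theta_{S,D}))^{\#,\tau}$ lies in the kernel of $L_{\frac32,\tau}$ and is therefore, for every fixed $z$, a weakly holomorphic modular form of weight $\frac32$ for $\varrho_{L}$ in $\tau$.

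Next I would cut down the list of normalization conditions. The spaces $S_{\frac12,\overline{\varrho}_{L}}$, $S_{\frac32,\varrho_{L}}$, and $M_{\frac32,\varrho_{L}}$ all vanish: every element of $M_{\frac32,\varrho_{L}}^{!}$ is a combination of the forms $g_{D}$ of \eqref{eq:gD}, each with nonzero principal part $e(-D\tau)$, so a holomorphic form (one without principal part) must be zero, and the same argument in weight $\frac12$ uses the $f_{d}$. Thus the hypothesis $\bil{f}{g}=0$ for $g\in S_{-k,\overline{\varrho}_{L}}$ is vacuous, conditions (iii) and (iv) of Proposition~\ref{prop:Lsharphalfint} do not apply, and the canonical preimage is pinned down among elements of $\Aexp{\frac32,\varrho_{L}}$ by (i) and (ii) alone. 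Since the right-hand side is canonical by construction, it remains only to check that $\Theta_{KM,D}^{\#,z}$ satisfies (ii); equivalently, that the weakly holomorphic form $\Phi$ has vanishing principal part in $\tau$, whence $\Phi\in M_{\frac32,\varrho_{L}} = \{0\}$ and the claim follows.

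The crux is therefore the verification of (ii) for $\Theta_{KM,D}^{\#,z}$, which I would carry out from the explicit expansion in Theorem~\ref{thm:KMD}. The mechanism is that $\K_{w}(v)\to 1$ as $v\to\infty$ for $w>0$ and $\K_{w}(v)\to 0$ for $w<0$, so that for $m>0$ the seed of $\widetilde{g}_{Dn^{2}}(\tau,my)$ has the same leading exponential $e(-\frac{Dn^{2}\tau}{4})\e_{Dn^{2}}$ as the principal part of $g_{Dn^{2}}$; hence in each coefficient $\widetilde{g}_{Dn^{2}}(\tau,my)-g_{Dn^{2}}(\tau)$ the principal parts cancel, whereas for $m<0$ the terms $\widetilde{g}_{Dn^{2}}(\tau,my)$ carry no principal part at all. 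Combined with the fact that $\mathcal H$ and $\widetilde{g}_{0}$ contribute nothing to negative-index coefficients in the limit, this forces $\lim_{v\to\infty}c_{\Theta_{KM,D}^{\#,z}}(d,v)=0$ for all $d<0$. The main obstacle will be to make these termwise limits rigorous—justifying the interchange of $v\to\infty$ with the regularized $z$-pairings and the summation over $m$—and here the normal convergence furnished by Proposition~\ref{prop:KMsharp-smooth} and the appendix estimates is precisely what is required.

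If one prefers to avoid the $\K_{w}$-asymptotics, there is a variant that I would fall back on: applying $L_{2,z}$ to $\Phi$ and using \eqref{eq L2 ThetaKM}, Proposition~\ref{prop: differential equations}(i), and the commutativity of $L_{2,z}$ with the $\tau$-preimage construction, one obtains $L_{2,z}(\Phi)=\frac{1}{4\pi}W$ with $W:=L_{2,z}((R_{0,z}(\Theta_{S,D}))^{\#,\tau})+4\pi\,\Theta_{KM,D}$ weakly holomorphic of weight $\frac32$ in $\tau$. Its negative-index $\tau$-coefficients vanish as $v\to\infty$ because the theta sums in the explicit formula of Theorem~\ref{thm:LpreimR0Siegel} decay, so $W\in M_{\frac32,\varrho_{L}}=\{0\}$ and thus $L_{2,z}(\Phi)=0$. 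Then $\Phi$ is holomorphic in $z$ of weight two and of moderate growth, hence lies in $M_{2}(\Gamma)=\{0\}$, giving $\Phi=0$ and the asserted identity.
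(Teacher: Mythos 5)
Your proposal is correct and follows essentially the same route as the paper: both sides are shown via \eqref{eq L32 ThetaKM} and the defining property of $(R_{0,z}(\Theta_{S,D}))^{\#,\tau}$ to have difference annihilated by $L_{\frac32,\tau}$, hence weakly holomorphic in $\tau$, and then the vanishing of the negative-index $\tau$-coefficients as $v\to\infty$ (by construction for the right-hand side, via the explicit expansion of Theorem~\ref{thm:KMD} and the $\K_{w}$-asymptotics for the left-hand side) forces the difference into $M_{\frac32,\varrho_{L}}=\{0\}$. Your write-up merely makes explicit what the paper compresses into ``it is easy to see,'' and the uniqueness framing via Proposition~\ref{prop:Lsharphalfint} is the same argument in different words.
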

\begin{proof}
	Fix $z \in \H$. The difference of both sides is modular of weight $\frac{3}{2}$ for $\varrho_{L}$ and holomorphic on $\H$ by Proposition~\ref{prop: differential equations KudlaMillson theta}. The Fourier coefficients (in the expansion with respect to $\tau$) of negative index of the right-hand side vanish as $v \to \infty$ by construction, and it is easy to see that the same holds true for the left-hand side as well. Hence the difference is in $M_{\frac{3}{2},\varrho_{L}} = \{0\}$, which finishes the proof.
\end{proof}

\begin{proof}[Proof of Theorem \ref{introthm:Fdgenser}]
  The theorem in the introduction now follows from Theorem \ref{thm:LpreimR0Siegel} together with
  Proposition \ref{prop:KMDsharp=RSsharp} and Proposition \ref{prop:KMsharp-smooth} by applying the map
  from vector-valued to scalar-valued modular forms given in Remark \ref{rem:vectorscalarvaluedisomorphism}.
\end{proof}

\section{$L_{2,z}$-preimages of the Millson and the Shintani theta functions for $k = 0$}
\label{sec:L2zpreimages Millson and Shintani wt 0}
In this section, we let $-d < 0$ be a negative fundamental discriminant. By Proposition~\ref{prop:Lsharpint} the function
\begin{align*}
\Theta_{M,-d}^{\#,z}(\tau,z) &= \sum_{m \in \Z}\bil{F_{m}-P_{m,y}}{\Theta_{M,-d}(\tau,\cdot)} e(mz)
\end{align*}
is a $L_{2,z}$-preimage of $\Theta_{M,-d}(\tau,z)$. It has weight two in $z$ and weight $\frac{1}{2}$ in $\tau$ for $\overline{\varrho}_{L}$. Further, it is smooth in $z$. Since it can be studied in an analogous way as the function $\Theta_{KM,D}^{\#,z}(\tau,z)$, we leave out some details in the proofs of the following theorem.

\begin{theorem}\label{thm:LpreimMillson}
We have the formula
\begin{align*}
  \Theta_{M,-d}^{\#,z}(\tau,z) = -\widetilde{f}_{0}(\tau,y)
  &-2\sum_{m > 0}\sum_{n \mid m}\left(\frac{-d}{m/n}\right)\left(\sqrt{d}f_{dn^{2}}(\tau) - \widetilde{f}_{dn^{2}}(\tau,my)\right)e(mz) \\
&  -2\sum_{m < 0}\sum_{n \mid m}\left( \frac{-d}{m/n}\right)\widetilde{f}_{dn^{2}}(\tau,my)e(mz),
\end{align*}
where
\begin{align*}
\widetilde{f}_{0}(\tau,y)&:= \frac{1}{4}\sum_{n=1}^{\infty}\left(\frac{-d}{n}\right)\sum_{\gamma \in \widetilde{\Gamma}_{\infty}\setminus \widetilde{\Gamma}}\bigg(\erfc\left( \frac{yn\sqrt{\pi}}{\sqrt{vd}}\right)\e_{0}\bigg)\bigg|_{\frac12,\overline{\varrho}_{L}}\gamma, \\
\widetilde{f}_{dn^{2}}(\tau,y) &:= \frac{1}{4}\sum_{\gamma \in \widetilde{\Gamma}_{\infty}\setminus \widetilde{\Gamma}} \bigg(e\left(-\frac{dn^{2}\tau}{4} \right)\erfc\left(\sqrt{\pi}\sgn(y)\left( \frac{y }{\sqrt{vdn^2}}-\sqrt{vdn^2}\right)\right)\e_{-dn^2}\bigg)\bigg|_{\frac12,\overline{\varrho}_{L}}\gamma.
\end{align*}
\end{theorem}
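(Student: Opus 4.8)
The plan is to follow the proof of Theorem~\ref{thm:KMD} line by line, replacing the Kudla--Millson input by the Millson input and tracking the resulting change in weight. By Proposition~\ref{prop:Lsharpint}, the $m$-th Fourier coefficient of $z \mapsto \Theta_{M,-d}^{\#,z}(\tau,z)$ equals $\bil{F_{m}-P_{m,y}}{\Theta_{M,-d}(\tau,\cdot)}$, where $F_{m}$ is the weight-zero input $j_{m}$ (with $F_{0}=1$ and $F_{m}=0$ for $m<0$) and $P_{m,y}$ is the weight-zero truncated Poincar\'e series in $z$. So I would evaluate the two pairings $\bil{F_{m}}{\Theta_{M,-d}(\tau,\cdot)}$ and $\bil{P_{m,y}}{\Theta_{M,-d}(\tau,\cdot)}$ separately for each $m \in \Z$ and then reassemble, finally passing to the scalar-valued picture via Remark~\ref{rem:vectorscalarvaluedisomorphism}.

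For the first pairing I would invoke the Millson theta lift of $F_m = j_m$, the twisted generating function of traces of cycle integrals, which is available in the literature on the Millson theta function (e.g.\ \cite{agor}, \cite{hoevel}) and is the exact analogue of the Kudla--Millson lift used in Theorem~\ref{thm:KMD} (cf.\ \cite{bruinierfunke06, alfesehlen}); in fact the weight $\tfrac12$ forms $f_{dn^2}$ occurring here are precisely the Zagier-dual partners of the weight $\tfrac32$ forms $g_{Dn^2}$ occurring there. This produces $\bil{F_{m}}{\Theta_{M,-d}(\tau,\cdot)} = -2\sqrt{d}\sum_{n \mid m}\left(\frac{-d}{m/n}\right)f_{dn^{2}}(\tau)$ for $m>0$, the value $0$ for $m<0$, and a constant-term contribution for $m=0$ that combines with the $m=0$ Poincar\'e pairing into $-\widetilde{f}_{0}(\tau,y)$; these are exactly the holomorphic contributions in the statement.

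For the second pairing I would substitute the expansion of $\Theta_{M,-d}$ from Proposition~\ref{proposition theta functions expansion}(ii) (with $D=-d$, so $|D|=d$, $\varepsilon(D)=i$, and $\widetilde{\varrho}_L = \overline{\varrho}_L$) and unfold against $P_{m,y}$ just as in Theorem~\ref{thm:KMD}. The $x_{1}$-integral over $[0,1]$ enforces the divisor condition $b=-m/n$, so only $n \mid m$ survive, and the remaining $y_{1}$-integral is
\[
\int_{y}^{\infty}\exp\!\left(2\pi m y_{1}-\pi\frac{y_{1}^{2}n^{2}}{vd}\right)dy_{1}.
\]
The key structural difference from the Kudla--Millson computation is that the Millson expansion carries the prefactor $y^{2}$ (weight $\tfrac12$) rather than $y^{3}$ (weight $\tfrac32$); after the invariant measure $y_1^{-2}\,dx_1\,dy_1$ the $y_1$-integrand is therefore a \emph{pure} Gaussian, whose integral is an $\erfc$ rather than the function $\K_{w}$ appearing in Theorem~\ref{thm:KMD}. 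Completing the square and relabelling $n \mapsto m/n$ reindexes the divisor sum and produces precisely the argument $\sqrt{\pi}\,\sgn(my)\left(\tfrac{my}{\sqrt{vdn^{2}}}-\sqrt{vdn^{2}}\right)$ defining $\widetilde{f}_{dn^{2}}(\tau,my)$.

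The step I expect to require the most care is the bookkeeping that converts the non-holomorphic theta into the stated completed form. Completing the square yields an exponential factor $e^{\pi v d n^{2}}$ which exactly cancels the real-exponential part of the term $e(-\tfrac{dn^{2}}{4}\overline{\tau})$ coming from the $\overline{\tau}$-dependence of the theta expansion, turning it into the holomorphic $e(-\tfrac{dn^{2}}{4}\tau)$ attached to $\widetilde{f}_{dn^{2}}$; getting the constants, the signs, and the factor $\sgn(my)$ in the $\erfc$-argument right across the three cases $m>0$, $m<0$ (where $F_m=0$, so only the Poincar\'e term contributes, with $\sgn(my)<0$), and $m=0$ (where the $x_1$-integral is trivial, picking out $b=0$ and yielding $\erfc(yn\sqrt{\pi}/\sqrt{vd})$) is the main source of potential error. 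As in Theorem~\ref{thm:KMD}, one must also justify the unfolding and the termwise interchange of summation and integration, which is routine given the Gaussian decay and the moderate growth of $F_m-P_{m,y}$; assembling the three cases then gives the asserted formula.
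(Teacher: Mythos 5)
Your proposal is correct and follows essentially the same route as the paper: the paper's own (very brief) proof likewise quotes the Millson theta lift of $F_m$ from the literature (\cite[Theorem 4.6]{agor}) and computes the pairing with $P_{m,y}$ by unfolding against the expansion of Proposition~\ref{proposition theta functions expansion}(ii), exactly as in the proof of Theorem~\ref{thm:KMD}. In fact your write-up supplies more detail than the paper does --- in particular the observation that the weight-$\tfrac12$ prefactor $y^{2}$ turns the unfolded $y_1$-integral into a pure Gaussian (hence an $\erfc$ rather than $\K_w$), and the cancellation of $e^{\pi v d n^{2}}$ against the $\overline{\tau}$-exponential --- and the constants/sign bookkeeping you flag as delicate is precisely what the paper leaves implicit.
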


\begin{proof}
	The Millson lift of $F_{m}$ has been computed in \cite[Theorem 4.6]{agor}, and the Millson lift of $P_{m,y}$ can be computed by unfolding as in the proof of Theorem~\ref{thm:KMD}.
\end{proof}

Analogously to Proposition \ref{prop:KMsharp-smooth}, we obtain that the above representation of $\Theta_{M,-d}^{\#,z}(\tau,z)$ is nicely convergent, so the function is smooth in both variables.

\begin{proposition}
	The function $\Theta_{M,-d}^{\#,z}(\tau,z)$ is smooth in $\tau$, and can be differentiated termwise.
\end{proposition}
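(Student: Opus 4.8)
The plan is to closely follow the strategy of the proof of Proposition~\ref{prop:KMsharp-smooth}, replacing the weight $\frac32$ data by the corresponding weight $\frac12$ data. By Theorem~\ref{thm:LpreimMillson}, the $m$-th Fourier coefficient of $z \mapsto \Theta_{M,-d}^{\#,z}(\tau,z)$ equals, for $m > 0$,
\[
  b(\tau; m, y) := -2\sum_{n \mid m}\legendre{-d}{m/n}\left(\sqrt{d}\,f_{dn^{2}}(\tau) - \widetilde{f}_{dn^{2}}(\tau,my)\right),
\]
while for $m < 0$ only the completion term survives, $b(\tau; m, y) = -2\sum_{n \mid m}\legendre{-d}{m/n}\widetilde{f}_{dn^{2}}(\tau,my)$, and the $m = 0$ term $-\widetilde{f}_{0}(\tau,y)$ is a single smooth summand. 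As in the cited proof, smoothness in $\tau$ together with termwise differentiability will follow once I show that for every $\alpha,\beta \in \N_{0}$ the series $\sum_{m}\big(\frac{\partial^{\alpha}}{\partial u^{\alpha}}\frac{\partial^{\beta}}{\partial v^{\beta}}b(\tau;m,y)\big)e(mz)$ converges normally on every compact $K \subset \uhp \times \uhp$.

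First I would fix such a compact set $K$ and bound $b(\tau;m,y)$ and all its iterated partial derivatives in $u$ and $v$. For $m > 0$ the sum over $n \mid m$ is finite, with $O(m^{\varepsilon})$ terms, so it suffices to estimate each summand. Invoking the weight $\frac12$ analogues of Propositions~\ref{prop:gest} and~\ref{prop:gtest} from the appendix --- which, as remarked there, are obtained by the same arguments as in weight $\frac32$ --- I would control the weakly holomorphic form $f_{dn^{2}}(\tau)$ and its completion $\widetilde{f}_{dn^{2}}(\tau,my)$, exploiting the cancellation between them to obtain a bound of the shape $b(\tau;m,y) = O_{K}\!\big(m^{N}\exp(c\pi m y)\big)$ for some $N \in \N_{0}$ and some $c < 2$, uniformly on $K$, and likewise for the $u,v$-derivatives. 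For $m < 0$, only the error-function completions $\widetilde{f}_{dn^{2}}(\tau,my)$ appear, and these (together with their derivatives) decay exponentially as $m \to -\infty$, again uniformly on $K$.

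Since $\lvert e(mz)\rvert = e^{-2\pi m y}$ and $c < 2$, multiplying the above estimates by $e(mz)$ gives terms that are $O_{K}(m^{N}e^{-(2-c)\pi m y})$ for $m > 0$ and exponentially small for $m < 0$; hence the differentiated series converges normally on $K$. Normal convergence legitimizes interchanging summation and differentiation, so $\tau \mapsto \Theta_{M,-d}^{\#,z}(\tau,z)$ is smooth and may be differentiated termwise. The hard part is precisely the growth estimate feeding the middle paragraph: one must quantify the cancellation between $\sqrt{d}\,f_{dn^{2}}$ and its completion $\widetilde{f}_{dn^{2}}$ and control it uniformly both in $m$ and across the divisors $n \mid m$. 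This is the content of the (weight $\frac12$ version of the) appendix; since weight $\frac32$ is singled out there as the most delicate case for convergence, the weight $\frac12$ estimates needed here should follow a fortiori from the same method.
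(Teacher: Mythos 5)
Your proposal is correct and follows essentially the same route as the paper: the paper proves this proposition by declaring it analogous to Proposition~\ref{prop:KMsharp-smooth}, whose argument is exactly your scheme of bounding the Fourier coefficients $b(\tau;m,y)$ and their $u,v$-derivatives via appendix-style growth estimates (with the cancellation between the weakly holomorphic forms and their truncated-Poincar\'e completions), then using $|e(mz)| = e^{-2\pi m y}$ with growth rate $c<2$ to get normal convergence of the differentiated series. The paper likewise only writes out the appendix estimates in weight $\tfrac32$ and asserts that the arguments generalize, which is precisely the a fortiori step you invoke for the weight $\tfrac12$ data.
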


The function $\Theta_{M,-d}^{\#,z}(\tau,z)$ also satisfies some interesting differential equations.

\begin{proposition}\label{prop: differential equations Millson theta}
The function $\Theta_{M,-d}^{\#,z}(\tau,z)$ satisfies the differential equations
\begin{align}
4\Delta_{\frac{1}{2},\tau}\left( \Theta_{M,-d}^{\#,z}(\tau,z)\right)&= \Delta_{2,z}\left( \Theta_{M,-d}^{\#,z}(\tau,z)\right) ,  \label{eq Delta ThetaM}\\
L_{\frac{1}{2},\tau}\left(\Theta_{M,-d}^{\#,z}(\tau,z)\right) &= \frac{1}{2}\Theta_{Sh,-d}(\tau,z), \label{eq L12 ThetaM} \\
L_{2,z}\left(\Theta_{M,-d}^{\#,z}(\tau,z)\right) &= \Theta_{M,-d}(\tau,z). \label{eq L2 ThetaM}
\end{align}
\end{proposition}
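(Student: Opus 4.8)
The plan is to follow verbatim the template established in the proof of Proposition~\ref{prop: differential equations KudlaMillson theta}, replacing the Siegel/Kudla--Millson pair by the Shintani/Millson pair and invoking part~(ii) of Proposition~\ref{prop: differential equations} in place of part~(i). First, \eqref{eq L2 ThetaM} is immediate from the very construction of $\Theta_{M,-d}^{\#,z}(\tau,z)$ as an $L_{2,z}$-preimage of $\Theta_{M,-d}(\tau,z)$ furnished by Proposition~\ref{prop:Lsharpint}.

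For \eqref{eq L12 ThetaM}, I would use the termwise differentiability established in the preceding smoothness proposition to write
\[
L_{\frac{1}{2},\tau}\bigl(\Theta_{M,-d}^{\#,z}(\tau,z)\bigr) = \sum_{m \in \Z}\bil{F_m - P_{m,y}}{L_{\frac{1}{2},\tau}\bigl(\Theta_{M,-d}(\tau,\cdot)\bigr)}e(mz),
\]
so that Proposition~\ref{prop:Lsharpint} identifies the left-hand side as an $L_{2,z}$-preimage of $L_{\frac{1}{2},\tau}(\Theta_{M,-d}(\tau,z))$. By the first identity of Proposition~\ref{prop: differential equations}(ii) this equals $\frac{1}{2}L_{2,z}(\Theta_{Sh,-d}(\tau,z))$, whence $\frac{1}{2}\Theta_{Sh,-d}(\tau,z)$ is a second $L_{2,z}$-preimage of the same function. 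Their difference is annihilated by $L_{2,z}$, hence holomorphic in $z$ and modular of weight two; since $\Theta_{Sh,-d}(\tau,\cdot) \in \Amod{2}$ is of moderate growth, the difference lies in $M_2 = \{0\}$, which yields \eqref{eq L12 ThetaM}.

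Finally, for \eqref{eq Delta ThetaM} I would chain the relations exactly as in the Kudla--Millson case. Using $\Delta_{2,z} = -R_{0,z}\circ L_{2,z}$ from \eqref{eq laplace via lowering raising} together with \eqref{eq L2 ThetaM}, then the second identity of Proposition~\ref{prop: differential equations}(ii) and the already-proven \eqref{eq L12 ThetaM}, and finally $R_{-\frac{3}{2},\tau}\circ L_{\frac{1}{2},\tau} = -\Delta_{\frac{1}{2},\tau}$ from \eqref{eq laplace via lowering raising}, one computes
\begin{align*}
\Delta_{2,z}\bigl(\Theta_{M,-d}^{\#,z}\bigr) &= -R_{0,z}\bigl(\Theta_{M,-d}\bigr) = -2R_{-\frac{3}{2},\tau}\bigl(\Theta_{Sh,-d}\bigr) \\
&= -4R_{-\frac{3}{2},\tau}\circ L_{\frac{1}{2},\tau}\bigl(\Theta_{M,-d}^{\#,z}\bigr) = 4\Delta_{\frac{1}{2},\tau}\bigl(\Theta_{M,-d}^{\#,z}\bigr).
\end{align*}

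The bulk of the argument is purely formal manipulation of the lowering, raising, and Laplace operators, so I expect the main obstacle to be the two analytic justifications rather than the algebra: first, the termwise application of $L_{\frac{1}{2},\tau}$, which rests on the (nontrivial) normal convergence and smoothness in $\tau$ proved in the preceding proposition, and second, the growth bookkeeping needed to conclude that the difference of the two $L_{2,z}$-preimages in the proof of \eqref{eq L12 ThetaM} actually lands in the finite-dimensional space $M_2$ rather than in a space of weakly holomorphic forms.
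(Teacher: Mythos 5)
Your proposal is correct and takes essentially the same approach as the paper's proof: termwise application of $L_{\frac{1}{2},\tau}$ justified by the preceding smoothness proposition, identification of two $L_{2,z}$-preimages of $\frac{1}{2}L_{2,z}\left(\Theta_{Sh,-d}\right)$ whose difference is a weight-two holomorphic form lying in $M_{2}=\{0\}$, and the identical operator chain (via Proposition on differential equations and \eqref{eq laplace via lowering raising}) for the Laplace identity. The paper's version is merely terser, deferring the uniqueness-of-preimage step to the Kudla--Millson argument that you spell out explicitly.
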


\begin{proof}
	Using Proposition~\ref{prop: differential equations} we compute
	\begin{align*}
	L_{\frac{1}{2},\tau}\left(\Theta_{M,-d}^{\#,z}(\tau,z)\right)
	&= \frac{1}{2}\sum_{m \in \Z} \bil{F_{m}-P_{m,y}}{L_{2,z}\left(\Theta_{Sh,-d}(\tau,\cdot)\right)} e(mz).
	\end{align*}
	By the same arguments as in the proof of Proposition~\ref{prop: differential equations KudlaMillson theta} we see that the series on the right-hand side equals $\Theta_{Sh,-d}(\tau,z)$.

	The Laplace equation \eqref{eq Delta ThetaM} can be proved using equations \eqref{eq L2 ThetaM} and \eqref{eq L12 ThetaM} and Proposition~\ref{prop: differential equations}.
\end{proof}

By Proposition \ref{prop:Lsharphalfint} the function
\begin{align*}
\Theta_{Sh,-d}^{\#,\tau}(\tau,z) &= \sum_{D \in \Z}\bil{g_{D}-\mathcal{P}_{D,v}}{\Theta_{Sh,-d}(\cdot,z)} e\left(\frac{D\tau}{4}\right)\e_{D}
\end{align*}
is a $L_{\frac{1}{2},\tau}$-preimage of $\Theta_{Sh,-d}(\tau,z)$. It has weight $\frac{1}{2}$ for $\varrho_{L}$ in $\tau$ and weight two for $\Gamma$ in $z$, and it is smooth in $\tau$.

\begin{theorem}\label{thm:LpreimShintani}
	We have the Fourier expansion
	\begin{align*}
	\Theta_{Sh,-d}^{\#,\tau}(\tau,z) = \sum_{D \in \Z} G_{d,D}^*(z; v) e\left( \frac{D\tau}{4}\right)\e_{D},
	\end{align*}
	where
        \begin{equation*}
         G_{d,D}^*(z; v) := G_{d,D}(z) - \frac{\sqrt{d}}{\pi}\sum_{\lambda \in L_{\frac{dD}{4}}\setminus\{0\}}
                            \frac{\chi_{-d}(\lambda)}{Q_{\lambda}(z)}\exp\left(-\frac{2\pi v R(\lambda,z)}{d} \right)
        \end{equation*}
        with
	\begin{align*}
	G_{d,D}(z) := \begin{dcases}
	-\frac{2i}{\pi \sqrt{D}}\sum_{\substack{Q \in \Gamma \setminus \mathcal{Q}_{-dD}\\ Q > 0}}\frac{\chi_{-d}(Q)}{\omega_{Q}}\frac{j'(z)}{j(z)-j(z_{Q})} & \text{if } D > 0,\\
	0 & \text{if } D \leq 0.
	\end{dcases}
	\end{align*}
	If $z_{Q}$ is a CM point then the value $\Theta_{Sh,-d}^{\#,\tau}(\tau,z_{Q})$ is obtained by taking the limit $z \to z_{Q}$ in
        the definition of $G_{d,D}^*(z, v)$.
\end{theorem}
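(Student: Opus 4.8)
The plan is to follow the proof of Theorem~\ref{thm:LpreimR0Siegel} almost verbatim. By Proposition~\ref{prop:Lsharphalfint} the $D$-th Fourier coefficient of $\Theta_{Sh,-d}^{\#,\tau}$ equals the regularized pairing $\bil{g_{D}-\mathcal{P}_{D,v}}{\Theta_{Sh,-d}(\cdot,z)}$, which I split as the Shintani lift $\bil{g_{D}}{\Theta_{Sh,-d}(\cdot,z)}$ minus the unfolded contribution $\bil{\mathcal{P}_{D,v}}{\Theta_{Sh,-d}(\cdot,z)}$. Since $g_{D}=0$ for $D\leq 0$ (here $M_{\frac32,\varrho_{L}}=\{0\}$ forces $\mathcal{F}_{0}=0$, cf.\ \eqref{eq:gD}), the first piece will produce the holomorphic part $G_{d,D}(z)$, vanishing for $D\leq0$, while the second will produce the theta correction term. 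Throughout I fix $z$ to be a non-CM point and postpone the extension to CM points to the very end.

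First I would evaluate the twisted Shintani lift $\bil{g_{D}}{\Theta_{Sh,-d}(\cdot,z)}$. Exactly as the Borcherds lift in Theorem~\ref{thm:LpreimR0Siegel} was computed via \cite[Theorem~6.1]{bruinierono}, this is the regularized theta integral of a weakly holomorphic form of weight $\frac32$ with principal part $e(-D\tau)$ against the weight $-\frac32$ Shintani kernel; its value is a weight-two meromorphic modular form in $z$ whose only singularities are simple poles at the CM points $z_{Q}$ of discriminant $-dD$. Citing the companion computation for the Shintani kernel (the exact analogue of the Millson lift \cite[Theorem~4.6]{agor} used in Theorem~\ref{thm:LpreimMillson}) and rewriting the resulting CM trace of $1/Q(z,1)$ through the denominator identity \eqref{denominators}, one identifies this lift with $G_{d,D}(z)$ for $D>0$ and with $0$ for $D\leq0$.

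The second piece is a clean unfolding computation. Using the Schwartz function defining $\Theta_{Sh,-d}$, I unfold $\mathcal{P}_{D,v}$ against the kernel,
\[
\bil{\mathcal{P}_{D,v}}{\Theta_{Sh,-d}(\cdot,z)} = \int_{v}^{\infty}\int_{0}^{1} e\left(-\frac{D\tau_{1}}{4}\right)\Theta_{Sh,-d}(\tau_{1},z)\frac{du_{1}\,dv_{1}}{v_{1}^{2}}.
\]
The $u_{1}$-integral selects the terms with $Q(\lambda)=\tfrac{dD}{4}$, i.e.\ $\lambda\in L_{\frac{dD}{4}}$, and the factor $\exp(\pi D v_{1}/2)$ from $e(-D\tau_{1}/4)$ cancels the $Q(\lambda)$-part of the Gaussian, leaving $\int_{v}^{\infty}\exp(-2\pi v_{1}R(\lambda,z)/d)\,dv_{1}=\tfrac{d}{2\pi R(\lambda,z)}\exp(-2\pi v R(\lambda,z)/d)$. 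Applying $2R(\lambda,z)=y^{-2}|Q_{\lambda}(z)|^{2}=y^{-2}Q_{\lambda}(z)Q_{\lambda}(\overline{z})$ (as in the Siegel proof) collapses the prefactor $\tfrac{Q_{\lambda}(\overline{z})}{y^{2}\sqrt{d}}\cdot\tfrac{d}{2\pi R(\lambda,z)}$ to $\tfrac{\sqrt{d}}{\pi}\,\tfrac{1}{Q_{\lambda}(z)}$, which reproduces exactly the term $-\tfrac{\sqrt{d}}{\pi}\sum_{\lambda}\tfrac{\chi_{-d}(\lambda)}{Q_{\lambda}(z)}\exp(-2\pi v R(\lambda,z)/d)$ appearing in $G_{d,D}^{*}(z;v)$.

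Finally, to pass to CM points I argue as in Theorem~\ref{thm:LpreimR0Siegel}: the closed formula for $G_{d,D}^{*}(z;v)$ extends to a real-analytic function on all of $\H$ once one checks that the pole of the holomorphic part $G_{d,D}$ at each $z_{Q}$ is cancelled by the matching summand of the theta term, while the intrinsic pairing $\bil{g_{D}-\mathcal{P}_{D,v}}{\Theta_{Sh,-d}(\cdot,z)}$ is already real-analytic in $z$ because $g_{D}-\mathcal{P}_{D,v}$ has moderate growth at the cusp; hence the two agree everywhere. I expect the main obstacle to be the first step, namely the evaluation of the regularized Shintani lift of the \emph{weakly holomorphic} form $g_{D}$: one must control the regularization coming from the pole of $g_{D}$ at the cusp and correctly identify the lift with the CM-trace meromorphic form $G_{d,D}$, including the precise constant $-\tfrac{2i}{\pi\sqrt{D}}$. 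The unfolding and the CM-point extension are then routine. As an independent consistency check, one can compare with the already-computed Millson preimage: by \eqref{eq L12 ThetaM} the function $2\Theta_{M,-d}^{\#,z}$ is also an $L_{\frac12,\tau}$-preimage of $\Theta_{Sh,-d}$, so uniqueness in Proposition~\ref{prop:Lsharphalfint} pins down the holomorphic ambiguity.
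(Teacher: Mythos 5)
Your proposal is correct and follows essentially the same route as the paper: the same decomposition of the $D$-th coefficient via Proposition \ref{prop:Lsharphalfint} into the lift of $g_{D}$ minus the truncated Poincar\'e series contribution, the same unfolding computation (which you carry out correctly, including the collapse of the prefactor to $\frac{\sqrt{d}}{\pi Q_{\lambda}(z)}$), and the same real-analyticity argument to extend to CM points. The paper likewise black-boxes the one nontrivial input you flag, the evaluation of the twisted Shimura lift $\bil{g_{D}}{\Theta_{Sh,-d}(\cdot,z)}$, identifying it as the weight-two meromorphic form vanishing at $i\infty$ with residues $-\frac{2i}{\pi\sqrt{D}}\chi_{-d}(Q)$ at the points $z_{Q}$ following \cite[Example 14.4]{borcherds}, rather than via your denominator-formula resummation (and note that your cited analogue \cite[Theorem 4.6]{agor} computes the lift in the $z$-variable, not the $\tau$-variable needed here).
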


\begin{proof}
	The non-twisted Shimura lift of weakly holomorphic modular forms has been studied in \cite[Example 14.4]{borcherds}. Using similar methods, one can show that the twisted Shimura lift
	\[
	\bil{g_{D}}{\Theta_{Sh,-d}(\tau_{1},z)}
	\]
	of the weakly holomorphic modular form $g_{D}$ for $D > 0$	is a meromorphic modular form of weight two for $\Gamma$ which vanishes at $i\infty$ and has poles at the CM points $z_{Q}$ of discriminant $-dD$ with residues $-\frac{2i}{\pi \sqrt{D}}\chi_{-d}(Q)$. This implies the formula for $G_{d,D}(z)$.

	The inner product involving $\mathcal{P}_{D,v}$ can be computed by unfolding in a similar way as in the proof of Theorem~\ref{thm:LpreimR0Siegel}.
\end{proof}

Finally, we obtain that the above representation of $\Theta_{Sh,-d}^{\#,\tau}(\tau,z)$ in fact gives the Fourier expansion of $\Theta_{M,-d}^{\#,z}(\tau,z)$ in $\tau$.

\begin{proposition}
	The identity
	\[
	\Theta_{M,-d}^{\#,z}(\tau,z) = \frac{1}{2}\Theta_{Sh,-d}^{\#,\tau}(\tau,z)
	\]
	holds.
\end{proposition}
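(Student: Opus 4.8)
The plan is to argue exactly as in the proof of Proposition~\ref{prop:KMDsharp=RSsharp}, producing a difference that is holomorphic in $\tau$ and then identifying it; the new feature is that the relevant space of holomorphic forms is no longer trivial, so one extra step is needed. Fix $z\in\H$ and set
\[
\Phi(\tau,z):=\Theta_{M,-d}^{\#,z}(\tau,z)-\tfrac12\Theta_{Sh,-d}^{\#,\tau}(\tau,z).
\]
Both terms transform like modular forms of weight $\frac12$ for $\overline{\varrho}_{L}$ in $\tau$, and I would first check that $\Phi$ is holomorphic in $\tau$: by \eqref{eq L12 ThetaM} we have $L_{\frac12,\tau}(\Theta_{M,-d}^{\#,z})=\frac12\Theta_{Sh,-d}$, while $L_{\frac12,\tau}(\Theta_{Sh,-d}^{\#,\tau})=\Theta_{Sh,-d}$ by property~(i) of Proposition~\ref{prop:Lsharphalfint}; hence $L_{\frac12,\tau}(\Phi)=0$.

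Next I would control the principal part of $\Phi$ in $\tau$. For $\frac12\Theta_{Sh,-d}^{\#,\tau}$ the negative-index coefficients vanish in the limit $v\to\infty$ by construction (property~(ii) of Proposition~\ref{prop:Lsharphalfint}). For $\Theta_{M,-d}^{\#,z}$ the same can be read off from the explicit Fourier expansion of Theorem~\ref{thm:LpreimMillson}: the coefficients of negative $\tau$-index are supplied by the combinations $\sqrt d\,f_{dn^2}(\tau)-\widetilde f_{dn^2}(\tau,my)$, and the error-function completion $\widetilde f_{dn^2}$ is designed so that its principal part cancels that of $\sqrt d\,f_{dn^2}$ as $v\to\infty$. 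Since $\Phi$ is holomorphic in $\tau$, these limits are precisely its would-be principal-part coefficients, so $\Phi(\cdot,z)\in M_{\frac12,\overline{\varrho}_{L}}$.

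Here is the one place where the present situation differs from Proposition~\ref{prop:KMDsharp=RSsharp}: there the ambient space $M_{\frac32,\varrho_{L}}$ vanished, whereas $M_{\frac12,\overline{\varrho}_{L}}$ is one-dimensional, spanned by the Jacobi theta function $\theta$ (by Serre--Stark, or via the isomorphism of Remark~\ref{rem:vectorscalarvaluedisomorphism} together with $M_{0}(\Gamma)=\C$). Thus I can only conclude $\Phi(\tau,z)=\psi(z)\theta(\tau)$ for some weight-two function $\psi$ of $z$, and the crux of the proof is to show $\psi\equiv0$. I would do this by comparing constant terms in $\tau$: the $D=0$ Fourier coefficient of $\theta$ is a nonzero $v$-independent multiple of $\e_0$, so the $(D{=}0,\e_0)$-coefficient of $\Phi$ equals $\psi(z)$ and is independent of $v$. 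By Theorem~\ref{thm:LpreimShintani} the $D=0$ coefficient of $\Theta_{Sh,-d}^{\#,\tau}$ is $G_{d,0}^{*}(z;v)\e_0$ with $G_{d,0}=0$, and its tail terms $\exp(-2\pi vR(\lambda,z)/d)$ decay as $v\to\infty$ for $z$ outside the measure-zero set where some $R(\lambda,z)=0$; hence this coefficient tends to $0$. Consequently $\psi(z)=\lim_{v\to\infty}\big[(D{=}0,\e_0)\text{-coefficient of }\Theta_{M,-d}^{\#,z}(\tau,z)\big]$, and it remains to show that this limit vanishes.

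I expect this final step --- extracting the $\tau$-constant term of $\Theta_{M,-d}^{\#,z}$ from the $z$-expansion of Theorem~\ref{thm:LpreimMillson} and proving it decays as $v\to\infty$, i.e. that the constant term of the relevant Millson theta lift dies in the limit --- to be the main obstacle; by real-analyticity in $z$ its vanishing on a dense set would then give $\psi\equiv0$ on all of $\H$. A cleaner alternative that avoids the one-dimensional space altogether is to verify directly that $\frac12\Theta_{Sh,-d}^{\#,\tau}$ meets the defining conditions of the distinguished $L_{2,z}$-preimage in Proposition~\ref{prop:Lsharpint} --- most importantly $L_{2,z}(\tfrac12\Theta_{Sh,-d}^{\#,\tau})=\Theta_{M,-d}$, obtained by applying $L_{2,z}$ to the formula of Theorem~\ref{thm:LpreimShintani} (the meromorphic pieces $G_{d,D}$ are annihilated away from the CM points, and the Gaussian tails reproduce the $z$-expansion of $\Theta_{M,-d}$) --- whence uniqueness forces the asserted identity.
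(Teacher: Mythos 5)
Your reduction is sound and follows the same route as the paper: the difference $\Phi$ is annihilated by $L_{\frac12,\tau}$ by \eqref{eq L12 ThetaM} together with Proposition~\ref{prop:Lsharphalfint}(i), and one must then rule out a holomorphic weight-$\frac12$ contribution. You are also right that $M_{\frac12,\overline{\varrho}_{L}}=\C\theta$ is not zero, so vanishing of the principal part alone does not finish the argument. But precisely at the point you label ``the main obstacle'' your proposal stops: you never prove that the $v\to\infty$ limit of the relevant constant term of $\Theta_{M,-d}^{\#,z}$ (equivalently, the potential $\theta$-component $\psi$) vanishes --- you only ``expect'' it. This is not a routine verification that can be deferred; it is exactly the content that makes the proposition true, and without it one cannot exclude $\Phi(\tau,z)=\psi(z)\theta(\tau)$ with $\psi\neq 0$. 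The paper closes this gap by reading off decay at the cusp from its explicit expansions: by Theorem~\ref{thm:LpreimShintani} the index-$0$ coefficient of $\Theta_{Sh,-d}^{\#,\tau}$ is $G_{d,0}^{*}(z;v)$, a sum of terms $\exp\left(-2\pi v R(\lambda,z)/d\right)$ with $R(\lambda,z)>0$, hence tends to $0$; and by Theorem~\ref{thm:LpreimMillson} the function $\Theta_{M,-d}^{\#,z}(\tau,z)$ itself vanishes as $v\to\infty$, since the error-function completions $\widetilde{f}_{dn^{2}}$ are built to cancel the growth of the $f_{dn^{2}}$. The difference is therefore a \emph{cusp} form, and the paper concludes with $S_{\frac12,\overline{\varrho}_{L}}=\{0\}$ --- which is the statement it actually needs, rather than one-dimensionality of $M_{\frac12,\overline{\varrho}_{L}}$.

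Your proposed alternative (verify that $\tfrac12\Theta_{Sh,-d}^{\#,\tau}$ satisfies the defining properties of the normalized $L_{2,z}$-preimage from Proposition~\ref{prop:Lsharpint} and invoke uniqueness) hides the same difficulty: besides the computation $L_{2,z}\bigl(\tfrac12\Theta_{Sh,-d}^{\#,\tau}\bigr)=\Theta_{M,-d}$, the uniqueness statement requires membership in $\Aexp{2}$ (growth control in $y$) and, crucially, condition (ii), namely that the limits $\kappa(m)$ of the negative-index $z$-Fourier coefficients vanish. But Theorem~\ref{thm:LpreimShintani} gives the $\tau$-expansion of $\Theta_{Sh,-d}^{\#,\tau}$, not its $z$-expansion, so checking (ii) is no easier than the limit you were trying to avoid. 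In short: the skeleton of your argument is the paper's, and your diagnosis of where the work lies is accurate, but the decisive decay statement is missing from both of your routes, so the proof is incomplete.
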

\begin{proof}
	Fix $z \in \H$. The difference of the left-hand side and the right-hand side transforms like a modular form of weight $\frac{1}{2}$ for $\overline{\varrho}_{L}$ and is holomorphic in $\tau$ by Proposition~\ref{prop: differential equations Millson theta}. By construction, the coefficients of negative index in the Fourier expansion (with respect to $\tau$) of $\Theta_{Sh,-d}^{\#,\tau}(\tau,z)$ vanish as $v \to \infty$, and using the explicit formula given in Theorem~\ref{thm:LpreimShintani} we see that its coefficient of index $0$ also vanishes as $v \to \infty$. Further, using Theorem~\ref{thm:LpreimMillson} it is easy to see that $\Theta_{M,-d}^{\#,z}(\tau,z)$ vanishes as $v \to \infty$, as well. Hence the difference of the two functions is a cusp form in $S_{\frac{1}{2},\overline{\varrho}_{L}} = \{0\}$, so they coincide.
\end{proof}

\section{Higher weight}
\label{sec:higherweight}
We discuss two interesting generating functions in higher weights that can be obtained by the method of this paper.
The results can be proved in a similar way as above, so we are brief and leave some of the details to the reader.
As mentioned in the introduction, for the functions considered in this section,
convergence is not an issue and therefore, proving that the generating functions
we consider are smooth in both variables, is much easier than before.
However, the higher weight examples highlight an important feature which
remained hidden in our exposition so far: namely, all the examples that we
consider are in fact a kind of degenerate theta series.
We refer to Remark \ref{rem:phisharp} for some details.

\subsection{A $L_{-k+\frac12,\tau}$-preimage of the Shintani theta function}
\label{sec:heigherweightshintani}

Let $D$ be a fundamental discriminant with $(-1)^{k}D < 0$. We assume that $k > 0$, since the case $k = 0$ is treated in Section \ref{sec:L2zpreimages Millson and Shintani wt 0}. We consider the function
\begin{align*}
\Theta_{Sh,k,D,\perp}(\tau,z) := \Theta_{Sh,k,D}(\tau,z) - \pi_{\text{bil}}(\Theta_{Sh,k,D}(\cdot,z)),
\end{align*}
where
\[
\pi_{\text{bil}}(f(\tau)) := \frac{v^{k+\frac32}}{\Gamma\left(k+\frac12\right)}\sum_{\substack{d < 0 \\ (-1)^{k}d \equiv 0,3 \pmod 4}}(\pi|d|)^{k+\frac12}\bil{ \mathcal{F}_{d}}{f}e\left(\frac{d\bar{\tau}}{4} \right)\e_{d}
\]
denotes the ``holomorphic projection'' of a smooth automorphic form $f$ of weight $-k-\frac32$ for $\widetilde{\rho}_{L}$ with respect to the bilinear pairing\footnote{We have $\pi_{\text{bil}}(f) = v^{k+\frac32}\overline{\pi_{\text{hol}}(v^{-k-\frac32}\overline{f})}$, where $\pi_{\text{hol}}$ denotes the usual holomorphic projection of weight $k+\frac32$ smooth automorphic forms.}. Here
\[
\mathcal{F}_{d}(\tau) := \frac{1}{4}\sum_{\gamma \in \widetilde{\Gamma}_{\infty}\backslash \Mp_{2}(\Z)}\left(e\left(-\frac{d\tau}{4}\right)\e_{d}\right)\bigg|_{k+\frac32,\widetilde{\rho}_{L}}\gamma \in M_{k+\frac32,\widetilde{\rho}_{L}}^{!}
\]
is the usual holomorphic Poincar\'e series of weight $k+\frac32$ for $\widetilde{\rho}_{L}$, which is a cusp form if $d < 0$ and an Eisenstein series if $d = 0$. For $d \in \Z$ we have
\begin{align*}
\bil{\mathcal{F}_{d}}{\Theta_{Sh,k,D}(\tau,z)} = k!\frac{|D|^{\frac{k+1}{2}}}{\pi^{k+1}}f_{k+1,-d,D}(z),
\end{align*}
where
\[
f_{k+1,d,D}(z) := \sum_{\lambda \in L_{-\frac{d|D|}{4}} \setminus \{0\}}\frac{\chi_{D}(\lambda)}{Q_{\lambda}(z)^{k+1}} = \sum_{Q \in \mathcal{Q}_{d|D|}\setminus \{0\}}\frac{\chi_{D}(Q)}{Q(z,1)^{k+1}},
\]
which can easily be shown using the unfolding argument. Note that $f_{k+1,d,D}$ is a cusp form if $d > 0$, an Eisenstein series if $d = 0$, and a meromorphic modular form if $d < 0$.

By construction $\Theta_{Sh,k,D,\perp}(\tau,z)$ is orthogonal to cusp forms of weight $k+\frac32$ with respect to the bilinear pairing. Hence, by Proposition \ref{prop:Lsharphalfint} the function
\begin{align*}
\Theta_{Sh,k,D,\perp}^{\#,\tau}(\tau,z) = \sum_{\substack{d \in \Z \\ (-1)^{k}d \equiv 0,3 \pmod{4}}}\bil{ \mathcal{F}_{d}-\mathcal{P}_{d,v}}{\Theta_{Sh,k,D,\perp}(\cdot,z)}e\left(\frac{d\tau}{4}\right)\e_{d}
\end{align*}
is a $L_{-k+\frac12,\tau}$-preimage of $\Theta_{Sh,k,D,\perp}(\tau,z)$. It has weight $-k+\frac12$ in $\tau$, weight $2k+2$ in $z$, and it is smooth in both variables. By computing the bilinear pairings with $\mathcal{F}_{d}$ and $\mathcal{P}_{d,v}$ explicitly, we obtain a completion of the generating function of the forms $f_{k+1,-d,D}$ for $d \in \Z$. The proof of Theorem \ref{thm:heigherweightintroduction} now follows from a straightforward calculation by unfolding against $\mathcal{F}_{d}$ and $\mathcal{P}_{d,v}$.

\begin{remark}\label{rem:phisharp}
  As we mentioned above, the function $\Theta_{Sh,k,D,\perp}^{\#,\tau}(\tau,z)$ can be written as a theta function, although it is not obtained from a
  Schwartz function.
  However, we have $\Theta_{Sh,k,D,\perp}^{\#,\tau}(\tau,z) = \Theta(\varphi_{Sh,k,D}^{\#,\tau}, \tau, z)$, where
  the ``singular Schwartz function'' $\varphi_{Sh,k,D}^{\#,\tau}(\lambda; \tau, z)$ is given as
  \begin{align*}
       \varphi_{Sh,k,D}^{\#,\tau}(\lambda; \tau, z)
    := \frac{k!\, |D|^{\frac{k+1}{2}}\chi_D(\lambda)}{\pi^{k+1}Q_\lambda(z)^{k+1}} e \left( \frac{Q(\lambda)\tau}{|D|} \right)
    \begin{cases}
      1 - \frac{1}{k!}\Gamma\left(k+1, \frac{2\pi v R(\lambda,z)}{|D|}\right) & \text{ if } Q(\lambda) \geq 0,\\
      \frac{\Gamma\left(k+\frac12,-\pi Q(\lambda)v\right)}{\Gamma\left(k+\frac12\right)} - \frac{1}{k!}\Gamma\left(k+1, \frac{2\pi v R(\lambda,z)}{|D|}\right) & \text{ if } Q(\lambda) < 0.
    \end{cases}
  \end{align*}
It would be interesting to investigate the properties of similar singular Schwartz functions and the associated theta functions, and we hope to come back to this problem in the near future.
\end{remark}

Applying the lowering operator in $z$ to the expression above, we immediately obtain
\[
L_{2k+2,z}\left(\Theta_{Sh,k,D,\perp}^{\#,\tau}(\tau,z)\right) = 2\Theta_{M,k,D}(\tau,z).
\]

To compute the Fourier expansion in $z$,
one can use the Millson theta lift of $F_{m}$,
which can be found in \cite{alfesthesis}, Theorem 4.2.3, and the Millson lifts of the truncated Poincar\'e series can again be computed by unfolding. We leave the details to the reader.

\subsection{A $\xi_{k+\frac32,\tau}$-preimage of the Millson theta function}
Suppose that $D$ is a fundamental discriminant with $(-1)^{k}D < 0$, and assume that $k > 0$ for simplicity. By Proposition \ref{prop:Lsharphalfint} applied to $y^{2k}v^{-k+\frac12}\overline{\Theta_{M,k,D}(\tau,z)}$, the function
\begin{align*}
\sum_{\substack{d \in \Z \\ (-1)^{k}d \equiv 0,3 \pmod{4}}}\left\langle \mathcal{F}_{d}-\mathcal{P}_{d,v},y^{2k}\Theta_{M,k,D}(\cdot,z)\right\rangle e\left(\frac{d\tau}{4}\right)\e_{d}
\end{align*}
is a $\xi_{k+\frac32}$-preimage of $\Theta_{M,k,D}(\tau,z)$. It has weight $k+\frac32$ in $\tau$, weight $-2k$ in $z$, and is smooth in both variables. The inner products against $\mathcal{F}_{d}$ for $d > 0$ have been computed in \cite{bringmannkanekohnen} and yield locally harmonic Maass forms of weight $-2k$ which map to multiples of the cusp forms $f_{k+1,-d,D}(z)$ under $\xi_{-2k}$. The inner products involving $\mathcal{P}_{d,v}$ can easily be computed by unfolding. In this way, we can recover the generating function $\widehat{\Psi}(\tau,z)$ studied in \cite{bringmannkanezwegers}.

\appendix
\section{Growth estimates}
\label{sec:appendix}
\subsection{Weakly holomorphic modular forms of weight $\frac32$}
\label{sec:growth-gD}

For $D > 0$ a discriminant we let $g_{D}$ be the weakly modular form of weight $\frac32$ given in \eqref{eq:gD}, and we use the same symbol for its vector-valued companion. The purpose of this section is to prove the following growth estimate.

\begin{proposition} \label{prop:gest}
  Fix a compact subset $K \subset \H$. For a discriminant $D > 0$ consider the function
\[
  G_D(\tau) = g_D(\tau) - \frac{1}{4}\sum_{\substack{\gamma \in \widetilde{\Gamma}_\infty \backslash \widetilde{\Gamma} \\ D \Im(\gamma\tau) \geq v_0}}\left(e\left( -\frac{D\tau}{4} \right)\e_{D}\right) \bigg\vert_{\frac{3}{2},\varrho_L} \gamma,
\]
where $v_{0} := \min\{\Im(\tau)\, : \, \tau \in K\}$. We have
  \begin{equation*}
   G_{D}(\tau) = O_K(D),
  \end{equation*}
  as $D \to \infty$ through positive discriminants, where the implied constant only depends on $K$. Moreover, the same statement is true for all partial derivatives of $G_D$ of all orders with respect to $u$ and $v$.
\end{proposition}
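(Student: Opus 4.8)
The plan is to recognize the subtracted series as a truncation of a Poincar\'e-series representation of $g_D$ itself, so that $G_D$ becomes an explicit truncation error. The first observation is that the $c=0$ cosets of the truncated sum reproduce exactly the principal part $e(-D\tau/4)\e_D$ of the vector-valued form $g_D$; for $\tau\in K$ and $D\geq 1$ this coset is never removed by the truncation, since $D\Im(\tau)=Dv\geq Dv_0\geq v_0$. Hence $G_D$ equals the difference between the holomorphic tail $\sum_{d\geq 0}B(D,d)e(d\tau/4)\e_d$ of $g_D$ (with the coefficients $B(D,d)$ of \eqref{eq:gD}) and the contribution of the cosets with $c\neq 0$. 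Both pieces are individually of exponential size $e^{O(D)}$ on $K$: the Rademacher-type growth $B(D,d)\asymp e^{\pi\sqrt{Dd}}$ against the weight $e^{-\pi dv/2}$ peaks at $d\approx D/v^2$ with value $e^{\pi D/(2v)}$. A term-by-term estimate is therefore hopeless, and the bound can only come from genuine cancellation.

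To access this cancellation I would realize $g_D$ as a conditionally convergent Maass--Poincar\'e series $\mathcal{R}_D$ with seed $e(-D\tau/4)\e_D$, built by Hecke's trick: one forms $\frac14\sum_\gamma(\mathcal{M}_s(Dv/4)e(-Du/4)\e_D)|_{\frac32,\varrho_L}\gamma$ with a Whittaker seed making the sum converge for $\Re(s)$ large, and continues to the harmonic point, where (using $S_{\frac12,\overline{\varrho}_L}=\{0\}$) it recovers the form with the prescribed principal part; this is also the source of the exact formula for the coefficients $B(D,d)$. Any discrepancy between $\mathcal{R}_D$ and $g_D$ is a harmonic form of moderate, polynomial-in-$D$ growth arising from the weight $\frac12$ obstruction, and is absorbed into the final bound, so I may assume $\mathcal{R}_D=g_D$. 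Because the truncation condition $D\Im(\gamma\tau)\geq v_0$ is $\widetilde{\Gamma}_\infty$-invariant and amounts to $|c\tau+d|^2\leq Dv/v_0$, subtracting the truncated sum leaves precisely the tail
\[
  G_D(\tau) = \tfrac14\sum_{\substack{\gamma\in\widetilde{\Gamma}_\infty\backslash\widetilde{\Gamma}\\ |c\tau+d|^2 > Dv/v_0}}\left(e\left(-\tfrac{D\tau}{4}\right)\e_D\right)\bigg|_{\frac32,\varrho_L}\gamma,
\]
summed in the Rademacher ordering, first over the residues $d\bmod c$ and then over $c$.

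The decisive feature of this representation is that on the truncated range every term carries a \emph{bounded} exponential factor: from $|c\tau+d|^2>Dv/v_0$ one gets $\exp(\pi D\Im(\gamma\tau)/2)=\exp(\pi Dv/(2|c\tau+d|^2))\leq \exp(\pi v_0/2)$, so each term is $O_K(|c\tau+d|^{-3/2})$. The only obstruction to summing trivially is that $\sum_\gamma|c\tau+d|^{-3/2}$ diverges, the weight $\frac32$ lying below the range of absolute convergence. I would therefore, for each fixed $c$, evaluate the sum over $d$ (restricted to $|c\tau+d|^2>Dv/v_0$) by Poisson summation; this turns the divergent absolute sum into a Kloosterman sum $K(\cdot,\cdot;c)$ times an incomplete Bessel integral. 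The Weil bound together with the smallness of the Bessel factor for the denominators $c\gtrsim_K\sqrt{D}$ (where the entire $c$-coset lies in the tail) makes the remaining $c$-sum converge, and a careful accounting of the finitely many $c\lesssim_K\sqrt{D}$, for which only a partial range of $d$ survives, yields the bound $O_K(D)$ uniformly for $\tau\in K$.

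For the derivatives, applying $\partial_u$ or $\partial_v$ to a single coset term produces, besides the factor $c/(c\tau+d)$ from differentiating the automorphy factor $(c\tau+d)^{-3/2}$, only the factor $D/(c\tau+d)^2$ from differentiating the exponent of $e(-D\gamma\tau/4)$. On the truncated tail both are $O_K(1)$, since $|c\tau+d|^2>Dv/v_0\geq D$ forces $D/|c\tau+d|^2\leq v_0/v$ and $c/|c\tau+d|\leq 1/v$; hence the estimate of the previous paragraph carries over to every iterated partial derivative, with an implied constant depending on the order. The main obstacle throughout is exactly this failure of absolute convergence at weight $\frac32$: both the identification $g_D=\mathcal{R}_D$ and the estimate of the partial cosets rest on extracting cancellation from the Kloosterman sums, and promoting a single-coefficient estimate to one that is uniform in $\tau\in K$ and stable under differentiation is the technical heart of the argument.
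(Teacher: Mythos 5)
Your overall architecture is sound and is in fact close in spirit to the paper's proof read from the coset side rather than the Fourier-coefficient side: the paper likewise splits at $|c|\approx\sqrt{D}/v_0$, identifies the small-$|c|$ cosets with the subtracted truncation up to an error $O_K(\sqrt{D})$ (this part converges absolutely, since for fixed $c$ one has $\sum_d|c\tau+d|^{-3/2}<\infty$), and is left with a sum of Kloosterman sums against $|c|^{-3/2}$. The genuine gap is the step you use to close the argument: the claim that ``the Weil bound together with the smallness of the Bessel factor'' makes the large-$c$ sum converge. After Poisson summation in $d$, the $n$-th Fourier mode of your tail is, up to bounded factors, $\sum_{c}H_c\left(-\frac{D}{4},n\right)|c|^{-3/2}$; the smallness of the Bessel/$\sinh$ factor is exactly what produces the exponent $-\frac32$ (and cancels the $D$-dependence), but it carries no further decay in $c$. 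Weil gives $|H_c|\ll_\epsilon |c|^{1/2+\epsilon}$, so the termwise bound is $\sum_c|c|^{-1+\epsilon}$, which diverges no matter where the sum starts: weight $\frac32$ is precisely the borderline case in which no absolute-value estimate can work. What is needed is genuine cancellation among the Kloosterman sums as $c$ varies, i.e., control of the Kloosterman zeta function $Z\left(-\frac{D}{4},n;s\right)$ at $s=\frac34$, a point where it is defined only by meromorphic continuation. This is spectral-theoretic input --- in the paper it is Lemma~\ref{lem:KloostZest}, taken from Hejhal's theory, together with the absence of exceptional eigenvalues so that $s=\frac34$ is not a pole --- and it must come with polynomial dependence on $D$ and $n$ (the bound $Z(m,n;s)=O(|mn|)$) so that summing over the modes $n$ against $e(n\tau)$ yields $O_K(D)$.

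Note also that the same missing input is already needed where you set $g_D=\mathcal{R}_D$: the continuation of the Maass--Poincar\'e series to the harmonic point, and the fact that the Rademacher-ordered conditionally convergent sum represents it, rest on exactly this continuation of Kloosterman zeta functions, so declaring any discrepancy to be ``a harmonic form of polynomial-in-$D$ growth'' does not bypass the issue (once the continuation is established, the discrepancy lies in $M_{\frac32,\varrho_L}=\{0\}$ and nothing needs absorbing). Your derivative argument inherits the same problem: the differentiated tail is still only conditionally convergent, so the boundedness on the tail of the extra factors $D/(c\tau+d)^{2}$ and $c/(c\tau+d)$ does not by itself justify the estimate. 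If you replace the appeal to Weil by the uniform bound on $Z(m,n;s)$ near $s=\frac34$, your argument closes and becomes essentially equivalent to the paper's proof.
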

\begin{proof}
  We prove the claim for $G_D$. The corresponding statement for the partial derivatives follows analogously.

  By \cite[Theorem~3.7]{bruinieronojenkins}, the Fourier expansion of $g_{D} \in M^{!}_{\frac{3}{2},\varrho_{L}}$ is given by
 \[
g_{D}(\tau) = e\left(-\frac{D\tau}{4}\right)\e_{D} - 2\delta_{D = \square}\e_{0} + \sum_{\substack{\nu \in \frac{1}{4}\N\\ -4\nu \equiv 0,1 \pmod{4}} }c_{D}(\nu)e(\nu \tau)\e_{4\nu}
\]
with
\[
c_{D}(\nu) =  24H(4\nu)\delta_{D = \square}+\sum_{\substack{c\in \mathbb{Z}\backslash\{0\}}}\frac{H_{c}\left(-\frac{D}{4},\nu\right)}{\sqrt{2D|c|}}\sinh\left( \frac{4\pi}{|c|}\sqrt{D\nu}\right),
\]
where 
\[
H_{c}(m,n) = e\left(-\frac{3\sgn(c)}{8}\right)\sum_{d\pmod{c}^*}\left\langle \varrho_{L}^{-1}\left(\left(\begin{smallmatrix}
	a & b \\ c & d
	\end{smallmatrix}\right),\sqrt{c\tau + d}\right)\e_{4m},\e_{4n}\right\rangle e\left( \frac{ma + dn}{c}\right)
\]
is a Kloosterman sum. Here $a,b \in \Z$ are chosen such that $\left(\begin{smallmatrix}
	a & b \\ c & d
	\end{smallmatrix}\right) \in \SL_{2}(\Z)$. Furthermore, $\delta_{D = \square}$ equals $1$ or $0$ according to whether $D$ is a square in $\Z$ or not.
 Since the Weil representation factors through a double cover of $\SL_{2}(\Z/2\Z)$, there exists a constant $C > 0$ such that
	\[
	\left|\left\langle \varrho_{L}^{-1}\left(\gamma\right)\e_{4m},\e_{4n}\right\rangle\right| \leq C
	\]
	for all $m,n$ and $\gamma \in \Mp_{2}(\Z)$. 
  The constant term and the Hurwitz class numbers (if $D$ is a square) in the Fourier expansion of $g_{D}$ only contribute $O_K(1)$.

       We split the sum over $c$ in the coefficient $c_{D}(\nu)$ at $\frac{\sqrt{D}}{v_0}$. First we consider the infinite part with $|c| > \frac{\sqrt{D}}{v_0}$,
	\begin{align*}
	\sum_{\nu > 0}\sum_{|c| > \frac{\sqrt{D}}{v_0}}\frac{H_{c}\left(-\frac{D}{4},\nu\right)}{\sqrt{2D|c|}}\sinh\left( \frac{4\pi}{|c|}\sqrt{D\nu}\right)e(\nu \tau)\e_{4\nu}.
	\end{align*}
	Using the series expansion $
	\sinh(z) = \sum_{k=0}^{\infty}\frac{z^{2k+1}}{(2k+1)!}
	$,
	we can write
	\[
	\sinh\left( \frac{4\pi}{|c|}\sqrt{D\nu}\right) = \frac{4\pi}{|c|}\sqrt{D\nu} + O\left( \left(\frac{4\pi}{|c|} \sqrt{D\nu}\right)^{3}e^{4\pi v_0\sqrt{\nu}}\right)
	\]
	for $|c| > \frac{\sqrt{D}}{v_0}$, where the implied constant is independent of $D$, $n$, $\nu$, and $K$. Hence we obtain
	\begin{align*}
	&\sum_{|c| > \frac{\sqrt{D}}{v_0}}\frac{H_{c}\left(-\frac{D}{4},\nu\right)}{\sqrt{2D|c|}}\sinh\left( \frac{4\pi}{|c|}\sqrt{D\nu}\right)= 2\pi\sqrt{2\nu}\sum_{|c| > \frac{\sqrt{D}}{v_0}}\frac{H_{c}\left(-\frac{D}{4},\nu\right)}{|c|^{\frac32}} + O\left(D\nu^{\frac32}e^{4\pi v_0 \sqrt{\nu}}\sum_{|c| > \frac{\sqrt{D}}{v_0}}|c|^{-\frac52}\right),
	\end{align*}
	where we estimate $|H_{c}(-\frac{D}{4},\nu)| \leq C|c|$ in the second sum. Multiplying the second sum by $e(\nu\tau)$ and summing over $\nu$ gives a contribution of $O_K(D)$.
        The first term behaves like
        \[
              2\pi\sqrt{2\nu} \sum_{c\in\mathbb{Z}\backslash\{0\}}\frac{H_{c}\left(-\frac{D}{4},\nu\right)}{|c|^{\frac32}} + O_K\left(\sqrt{D\nu}\right)
        \]
       with an implied constant only depending on $K$.
       The first sum is the Kloosterman zeta function $Z(-\frac{D}{4},\nu, \frac34)$ and Lemma \ref{lem:KloostZest} below shows that this amounts to $O(D \nu^{\frac{3}{2}})$, yielding a total contribution of $O_K(D)$ to the result.

	It remains to bound the part
	\begin{align}\label{eq finite part}
	e\left(-\frac{D\tau}{4}\right)\e_{D} + \sum_{\nu > 0}\sum_{|c| \leq \frac{\sqrt{D}}{v_0}}\frac{H_{c}\left(-\frac{D}{4},\nu\right)}{\sqrt{2D|c|}}\sinh\left( \frac{4\pi}{|c|}\sqrt{D\nu}\right)e(\nu\tau)\e_{4\nu}.
	\end{align}
	To this end, we consider the absolutely convergent sum
	\begin{align}\label{eq finite part 2}
	\frac{1}{4}\sum_{\substack{\gamma \in \widetilde{\Gamma}_{\infty}\setminus \widetilde{\Gamma} \\ |c| \leq \frac{\sqrt{D}}{v_0}}}\left(e\left(-\frac{D\tau}{4}\right)\e_{D}\right)\bigg|_{\frac32,\varrho_{L}}\gamma
	\end{align}
	It is one-periodic and thus has a Fourier expansion, which can be computed in the same way as the expansion of the Maass Poincar\'e series. It turns out that the Fourier expansion equals \eqref{eq finite part}. Furthermore, \eqref{eq finite part 2} differs from
	\[
	\frac{1}{4}\sum_{\substack{\gamma \in \widetilde{\Gamma}_{\infty}\setminus \widetilde{\Gamma} \\ D\Im(\gamma \tau) \geq v_0}}\left(e\left(-\frac{D\tau}{4}\right)\e_{D}\right)\bigg|_{\frac32,L}\gamma
	\]
	(which is the main part of the growth of $g_{D}$ in the theorem) by less than
	\begin{align*}
	& \sum_{|c| \leq \frac{\sqrt{D}}{v_0}}\sum_{\substack{d \in \Z \\ \gcd(c,d) = 1 \\ |d| \geq \sqrt{D} + |cu|}}|c\tau + d|^{-\frac32}e^{\frac{\pi v_0}{2}}
	\ll   \sum_{|c| \leq \frac{\sqrt{D}}{v_0}}\sum_{\substack{d \in \Z}}\left(c^{2} + d^{2}\right)^{-\frac34}e^{\pi\frac{v_0}{2}}  = O_K\left(\sqrt{D}\right).
	\end{align*}
	Up to the estimates for the Kloosterman zeta function given in the following lemma, this finishes the proof.
\end{proof}

The growth behaviour of Kloosterman zeta functions is well-known to experts but somewhat difficult to find in the literature. Therefore we state it in a form which is suitable for our purposes.

\begin{lemma}\label{lem:KloostZest}
  Consider the Kloosterman zeta function
	\[
	Z(m,n;s) := \sum_{c\in\mathbb{Z}\backslash\{0\}}\frac{H_{c}(m,n)}{|c|^{2s}}.
	\]
  It converges to a holomorphic function for $\Re(s)>1$ and has a meromorphic continuation to $\Re(s) > \frac{1}{2}$, with possible poles at the points in $\mathcal{N} := \{ s_n, 1 - s_n \}$, where $\lambda_n := s_n(1-s_n)$ are the eigenvalues of the Laplacian $\Delta_{\frac{1}{2}}$.
  Let $E \subset \{s \in \C:\Re(s)>\frac{1}{2} \}$ be any compact subset. Then, for $s \in E \setminus \mathcal{N}$, we have
  \[
     Z(m,n,s) = O( |mn|),
  \]
  where the implied constant depends only on $E$.
\end{lemma}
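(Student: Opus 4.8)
The plan is to access $Z(m,n;s)$ through the spectral theory of the hyperbolic Laplacian for $\varrho_L$. For $\Re(s)$ large I would fix the weight $\tfrac32$ Maass--Poincar\'e series $P_{m,s}(\tau):=\tfrac14\sum_{\gamma\in\widetilde{\Gamma}_\infty\backslash\widetilde{\Gamma}}\big(\mathcal W_{s}(4\pi|m|v)\,e(mu)\,\e_{4m}\big)\big|_{\frac32,\varrho_L}\gamma$, whose radial seed $\mathcal W_{s}$ is the Whittaker function attached to the spectral parameter $s$; this one-parameter family specializes at $s=\tfrac34$ to (a normalization of) the forms $g_D$, hence to the objects in Proposition~\ref{prop:gest}. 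Unfolding $\langle P_{m,s},P_{n,\overline{s}}\rangle$ at the cusp $\infty$ yields a diagonal contribution $\delta_{m,n}(\cdots)$ together with $Z(m,n;s)$ times an explicit Bessel--Gamma integral $I_{m,n}(s)$. Because there are at most $|c|$ residues modulo $c$ and the Weil-representation matrix coefficients $\langle\varrho_L^{-1}(\gamma)\e_{4m},\e_{4n}\rangle$ are uniformly bounded, one has $|H_c(m,n)|\le C|c|$, so both $Z(m,n;s)$ and this identity converge absolutely and holomorphically for $\Re(s)>1$.

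To continue $Z(m,n;s)$ I would decompose the inner product $\langle P_{m,s},P_{n,\overline{s}}\rangle$ spectrally (Selberg). The discrete spectrum, an orthonormal family $\{u_j\}$ of $\Delta_{\frac32}$-eigenforms with eigenvalues $\lambda_j=s_j(1-s_j)$, contributes $\sum_j\langle P_{m,s},u_j\rangle\,\overline{\langle P_{n,\overline{s}},u_j\rangle}$; each pairing unfolds to $\overline{\rho_j(m)}$ times a transform of the seed with a simple pole at $s=s_j$ and, via the symmetry $s\mapsto 1-s$, at $s=1-s_j$, which together constitute $\mathcal N$. Since $\xi_{\frac32}$ identifies the $\Delta_{\frac32}$- and $\Delta_{\frac12}$-spectra, these $s_j$ are exactly the spectral parameters of $\Delta_{\frac12}$ named in the statement. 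The continuous spectrum contributes an integral of Eisenstein coefficients over $\Re=\tfrac12$, which is holomorphic for $\Re(s)>\tfrac12$. Hence $\langle P_{m,s},P_{n,\overline{s}}\rangle$ continues meromorphically to $\Re(s)>\tfrac12$ with poles confined to $\mathcal N$; as $I_{m,n}(s)$ is holomorphic and nonvanishing for $\Re(s)>\tfrac12$, dividing transfers the continuation and pole locations to $Z(m,n;s)$.

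For the uniform bound I would estimate the spectral side in $m,n$. By Cauchy--Schwarz the discrete sum is at most $\big(\sum_j|\rho_j(m)|^2|h(s,\lambda_j)|\big)^{1/2}\big(\sum_j|\rho_j(n)|^2|h(s,\lambda_j)|\big)^{1/2}$, where $h(s,\lambda_j)$ is the Bessel transform of the seed, which decays rapidly in $\lambda_j$ for $s\in E$. Combining this decay with a local Weyl-law/large-sieve estimate $\sum_{\lambda_j\le X}|\rho_j(m)|^2\ll (X+|m|)|m|^{\varepsilon}$ for vector-valued weight $\tfrac32$ forms bounds each factor by $O(|m|^{1+\varepsilon})$, respectively $O(|n|^{1+\varepsilon})$, so the discrete contribution is $O(|mn|^{\frac12+\varepsilon})$; the Eisenstein contribution is controlled identically from the explicit Fourier coefficients of the Eisenstein series. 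Dividing by $I_{m,n}(s)$, whose reciprocal grows at most polynomially in $m,n$ on $E$, yields $Z(m,n;s)=O(|mn|)$, the crude exponent $1$ comfortably absorbing all of the above, uniformly in $s\in E\setminus\mathcal N$.

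The main obstacle will be the uniformity of these estimates: the Fourier-coefficient bounds (Rankin--Selberg/large sieve) and the decay of the Bessel transform must hold simultaneously in the eigenvalue $\lambda_j$ and in the indices $m,n$, all in the vector-valued half-integral-weight setting for $\varrho_L$, and one must verify that the normalizing factor $I_{m,n}(s)$ neither vanishes nor grows too quickly on $E$. These are precisely the folklore points that are awkward to cite cleanly, which is why routing the argument through the spectral expansion of the explicit Poincar\'e series $P_{m,s}$ is the most transparent path.
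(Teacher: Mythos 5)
Your overall strategy---spectral theory of Maass--Poincar\'e series---is the standard route to results of this type, but it is genuinely different from what the paper does: the paper does not reprove the continuation at all. Its proof consists of checking that $\varrho_{L}$ factors as (representation of $\SL_2(\Z)$) $\times$ (weight $\tfrac32$ multiplier system), so that the Kloosterman zeta function $Z(m,n;s)$ coincides with the one treated by Hejhal, and then quoting Proposition 11 of Appendix E of \cite{hejhal} (in the generalized form of Section 6 of that appendix), which contains exactly the meromorphic continuation, the pole locations, and the $O(|mn|)$ bound. So you are in effect attempting to reprove the cited result, and your sketch breaks down at precisely the step that constitutes the real content of that result.

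The gap is your central unfolding identity. For a Maass--Poincar\'e series $P_{m,s}$ the $y$-dependence of its $n$-th Fourier coefficient does factor (it is again a Whittaker function), but the $c$-dependence does not: unfolding $\langle P_{m,s},P_{n,\overline{s}}\rangle$ yields, besides the diagonal term, the Bessel-weighted sum
\[
  \sum_{c \neq 0}\frac{H_c(m,n)}{|c|}\,J_{2s-1}\!\left(\frac{4\pi\sqrt{|mn|}}{|c|}\right)
\]
(or the analogue with $I_{2s-1}$, depending on signs), times a $y$-integral --- \emph{not} $Z(m,n;s)$ times a factor $I_{m,n}(s)$ independent of $c$. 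So your spectral analysis continues the wrong Dirichlet series. Recovering $Z(m,n;s)$ requires deconvolving the Bessel kernel, e.g.\ via its power series, which expresses the weighted sum as $\sum_{k\geq 0}\tfrac{(-1)^k(2\pi\sqrt{|mn|})^{2s-1+2k}}{k!\,\Gamma(2s+k)}Z(m,n;s+k)$; solving for the $k=0$ term introduces correction terms of size $(2\pi\sqrt{|mn|})^{2k}/k!$, whose sum is exponential in $\sqrt{|mn|}$ and destroys any hope of the uniform bound $O(|mn|)$. Taming exactly this non-factorization, while keeping polynomial dependence on $m,n$, is what the proofs of Selberg, Goldfeld--Sarnak, and Hejhal actually accomplish. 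A second, related defect is your final step: you bound the spectral side by $O(|mn|^{\frac12+\varepsilon})$ and then ``divide by $I_{m,n}(s)$'', asserting only that its reciprocal grows at most polynomially and that it does not vanish on $E$; without an explicit Gamma-factor evaluation of that normalizing integral (Gamma quotients do have zeros, and an unspecified polynomial loss is not compatible with the claimed exponent $1$), this is not a proof. If you want a self-contained argument, follow Goldfeld--Sarnak, or Pribitkin's extension to arbitrary real weight and multiplier systems, which covers the vector-valued weight $\tfrac32$ case after the same convention-matching the paper performs; otherwise the efficient proof is the paper's citation.
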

\begin{remark}
  We have $\frac{3}{4} \not\in \mathcal{N}$ in Lemma \ref{lem:KloostZest} as there are no exceptional eigenvalues of the hyperbolic Laplacian for the full modular group.
\end{remark}
\begin{proof}[Proof of Lemma \ref{lem:KloostZest}]
  The proof follows from the generalization of Proposition 11 of Appendix E in \cite{hejhal} as described in Section 6 of Appendix E loc. cit.
To obtain exactly the Kloosterman zeta function that we defined above, note that we can write $\varrho_L$ as the product of a representation of $\SL_2(\Z)$ and a multiplier system for weight $\frac{3}{2}$ as follows: Let $\nu$ be the multiplier system on $\SL_2(\Z)$ defined via
\[
  \nu(S) := e^{-\frac{3 \pi i}{4}}, \quad \nu(T) := e^{\frac{3\pi i}{12}}, \quad \nu(\gamma_1 \gamma_2) := \sigma(\gamma_1, \gamma_2) \nu(\gamma_1)\nu(\gamma_2),
\]
where $S:=\left(\begin{smallmatrix}
0 & -1 \\ 1 & 0
\end{smallmatrix}\right)$, $T:=\left(\begin{smallmatrix}
1 & 1 \\ 0 & 1
\end{smallmatrix}\right)$, and where $\sigma(\gamma_1, \gamma_2) = \sigma_{\frac{3}{2}}(\gamma_1, \gamma_2)$ is the usual weight $\frac{3}{2}$-cocycle as defined on page 332 of \cite{hejhal}.
It can be checked that
\[
  \chi(\gamma) := \varrho_L(\widetilde{\gamma}) \nu(\gamma)^{-1}
\]
defines a representation of $\SL_2(\Z)$, where $\widetilde{\gamma} := (\gamma, \sqrt{c\tau+d})$.
This follows from the fact that the cocycle of the Weil representation is equal to $\sigma_{\frac{3}{2}} = \sigma_{\frac{1}{2}}$.
Then the definition of $Z(n,m;s, \mathcal{W})$ with $\mathcal{W}(\gamma) := \chi(\gamma)\nu(\gamma) = \varrho_{L}(\widetilde{\gamma})$ on page 700 of \cite{hejhal} agrees with $Z(m,n;s)$ above.
\end{proof}

\subsection{Non-holomorphic Poincar\'{e} series of weight $\frac32$}

In this subsection, we derive growth estimates for the non-holomorphic Poincar\'e series $\widetilde{g}_{D}(\tau,y)$ defined in Theorem \ref{thm:KMD}.

\begin{proposition} \label{prop:gtest}
	Fix a compact subset $K \subset \H \times \R^{+}$. For a discriminant $D > 0$ consider the function
        \begin{align*}
        \widetilde{G}_{D}(\tau,y) := \widetilde{g}_{D}(\tau,y)
            - \frac{1}{4}\sum_{\substack{\gamma \in \widetilde{\Gamma}_{\infty}\setminus \widetilde{\Gamma} \\ D\Im(\gamma\tau) \geq v_0}}
                                       \left(e\left(-\frac{D\tau}{4} \right) \e_{D}\right)\bigg|_{\frac{3}{2},\varrho_{L}}\gamma,
        \end{align*}
        where $v_{0} := \min\{\Im(\tau) \, : \, (\tau,r) \in K \text{ for some $r \in \R^{+}$}\}$. We have the estimate
	\[
	\sum_{n \mid m}\left(\frac{D}{m/n}\right)n\widetilde{G}_{Dn^{2}}(\tau,my) = O_{K}\left(m^{4}\exp\left(\frac{3}{2}\pi my\right)\right)
	\]
	as $m \to \infty$, where the implied constant depends only $K$ and $D$.
        Moreover, as $m \to -\infty$, we have
	\[
	\widetilde{g}_{D}(\tau,my) = O_{K}\left(m^2 \exp(2 \pi my)\right)
	\]
	with an implied constant only depending on $K$ and $D$. Moreover, the similar estimates hold for all iterated partial derivatives (only the power of $m$ changes, depending on the order of the derivative).
\end{proposition}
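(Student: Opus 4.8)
The plan is to bound $\widetilde{G}_D$ directly as a Poincaré series, in the same spirit as the proof of Proposition \ref{prop:gest} (where the $c$-sum is split at $|c|\sim\sqrt{D}$), but exploiting that here $\widetilde{g}_D$ is \emph{given} as a Poincaré series rather than through a Kloosterman expansion. The starting point is slash-covariance: since $\K_y(Dv)$ is a function of $v$ alone, applying $\gamma$ turns the seed $\K_y(Dv)e(-\tfrac{D\tau}{4})\e_D$ into $\K_y(D\Im(\gamma\tau))\,(e(-\tfrac{D\tau}{4})\e_D)\vert_{\frac32,\varrho_L}\gamma$. As the subtracted term in $\widetilde{G}_D$ is the \emph{same} series built from the holomorphic seed $e(-\tfrac{D\tau}{4})\e_D$, restricted to the finitely many cosets with $D\Im(\gamma\tau)\geq v_0$, one gets the exact decomposition
\[
\widetilde{G}_D(\tau,y) = \tfrac14\!\!\sum_{D\Im(\gamma\tau)\geq v_0}\!\!\big(\K_y(D\Im(\gamma\tau))-1\big)(e(-\tfrac{D\tau}{4})\e_D)\big\vert_{\frac32,\varrho_L}\gamma + \tfrac14\!\!\sum_{D\Im(\gamma\tau)< v_0}\!\!\K_y(D\Im(\gamma\tau))(e(-\tfrac{D\tau}{4})\e_D)\big\vert_{\frac32,\varrho_L}\gamma.
\]
Specializing $D\mapsto Dn^2$, $y\mapsto my$ and noting that $(e(-\tfrac{Dn^2}{4}\tau)\e_{Dn^2})\vert_{\frac32,\varrho_L}\gamma$ has absolute value $|c\tau+d|^{-3}e^{\frac{\pi}{2}Dn^2\Im(\gamma\tau)}\,O(1)$, everything reduces to estimating the one-variable quantities $|\K_w(t)-1|$ and $|\K_w(t)|$ (with $w=my$, $t=Dn^2\Im(\gamma\tau)$) weighted by $|c\tau+d|^{-3}e^{\frac{\pi}{2}t}$ and summed over cosets.

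The key analytic input is a set of uniform estimates for $\K_w$ read off from the closed form in the Remark following Theorem \ref{thm:KMD}. Using $\erfc(a)\leq e^{-a^2}$ for $a\geq 0$ and $(\sqrt{t}-\tfrac{w}{\sqrt{t}})^2=t-2w+\tfrac{w^2}{t}$, I would record: for $w>0$ and $t\geq\max(w,v_0)$,
\[
|\K_w(t)-1|\leq C\exp\!\Big(-\pi\big(\sqrt{t}-\tfrac{w}{\sqrt{t}}\big)^2\Big)\leq C\,e^{-\pi t+2\pi w};
\]
for $v_0\leq t<w$ the crude bound $|\K_w(t)-1|\leq C$; for $0<t<v_0$ with $w$ large, $\K_w(t)$ super-exponentially small; and for $w<0$ the same Gaussian-tail computation gives $|\K_w(t)|\leq C\,e^{-2\pi|w|}e^{-\pi t}$ on all of $t>0$. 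All constants depend only on $v_0$, hence only on $K$. Their analogues for the iterated $u,v$-derivatives (which differentiate the $\erfc$ and Gaussian pieces and cost only polynomial factors in $Dn^2\leq Dm^2$, $w=my$ and $|c\tau+d|$) are what force the stated ``power of $m$ changes''.

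For $m>0$, feeding these into the decomposition and using $\sum_{(c,d)}|c\tau+d|^{-3}<\infty$, the cosets with $t\geq my$ contribute $e^{-\pi t+2\pi my}e^{\frac{\pi}{2}t}=e^{-\frac{\pi}{2}t+2\pi my}$, which is largest at the transition $t\approx my$ and there equals $e^{\frac{3}{2}\pi my}$; the cosets with $v_0\leq t<my$ contribute at most $e^{\frac{\pi}{2}my}$, and those with $t<v_0$ are negligible. This gives $\widetilde{G}_{Dn^2}(\tau,my)=O_{K,D}(e^{\frac{3}{2}\pi my})$ up to polynomial factors; multiplying by $n$ and summing over the $O(m^{\varepsilon})$ divisors $n\mid m$ (with $n\leq m$, $Dn^2\leq Dm^2$) absorbs all losses into $m^4$. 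For $m\to-\infty$ one bounds $\widetilde{g}_D(\tau,my)$ termwise via $|\K_{my}(t)|\leq C\,e^{-2\pi|my|}e^{-\pi t}$: the factor $e^{-2\pi|my|}=e^{2\pi my}$ pulls out of the sum, the residual $\sum|c\tau+d|^{-3}e^{-\frac{\pi}{2}t}$ converges, and the polynomial factor is swallowed by $m^2$. (Retaining the discarded term $-\pi w^2/t$ in fact shows the decay is super-exponential, which is ultimately what Proposition \ref{prop:KMsharp-smooth} needs.)

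The main obstacle is the first, analytic, step: establishing the uniform, regime-sensitive estimates for $\K_w(t)$ and all its iterated $u,v$-derivatives with constants independent of $D$, $n$ and $m$, and in particular handling the transition window $t\approx w=my$, where neither the Gaussian-tail bound nor the crude bound is individually sharp and where the exponent $\tfrac32\pi$ is produced. Once these one-variable estimates and the convergence of $\sum_{(c,d)}|c\tau+d|^{-3+\delta}$ (for small $\delta>0$, to accommodate derivatives) are in place, the remaining work---assembling the coset sums, bookkeeping the polynomial factors through the divisor sum, and repeating for derivatives---is routine. As an alternative one could compute the Fourier expansion of $\widetilde{g}_D$ in $\tau$, producing Kloosterman sums times a Whittaker-type transform of $\K$, and then argue exactly as in Proposition \ref{prop:gest} through Lemma \ref{lem:KloostZest}; the direct approach seems cleaner here because the $\K$-seed already makes the Poincaré series absolutely convergent, decaying super-exponentially as $\Im(\gamma\tau)\to 0$.
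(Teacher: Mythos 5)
Your outline follows the paper's own proof almost step for step: your identity $\K_w(t)-1=-\tfrac12\erfc\left(\sqrt{\pi}\left(\sqrt{t}-\tfrac{w}{\sqrt{t}}\right)\right)-\tfrac{1}{2\pi\sqrt{t}}\exp\left(-\pi\left(\sqrt{t}-\tfrac{w}{\sqrt{t}}\right)^2\right)$ is exactly the paper's $\erfc(x)=2-\erfc(-x)$ manipulation, the split of the coset sum at $t\approx my$ with $\erfc(x)\le e^{-x^2}$ produces the $\tfrac32\pi my$ exponent in the same way, and the derivatives are handled by the same ``only polynomial loss'' principle. However, two quantitative steps are wrong as stated. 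The modulus of the weight-$\tfrac32$ slash factor is $|\phi(\tau)^{-2k}|=|c\tau+d|^{-k}=|c\tau+d|^{-3/2}$, not $|c\tau+d|^{-3}$, so the series $\sum_{(c,d)}|c\tau+d|^{-3}$ you invoke is not the relevant one; the relevant series $\sum_{(c,d)}|c\tau+d|^{-3/2}$ diverges (one needs exponent $>2$). This is fatal for your $m\to-\infty$ argument: after multiplying your bound $|\K_w(t)|\le Ce^{-2\pi|w|}e^{-\pi t}$ by the seed's factor $e^{\pi t/2}$ and the slash factor, your majorant is $Ce^{2\pi my}\sum_{\gamma}e^{-\pi t/2}|c\tau+d|^{-3/2}$ with $t=D\Im(\gamma\tau)\to 0$ along the sum, hence $e^{-\pi t/2}\to 1$ and the majorant diverges. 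The term $-\pi w^2/t$ that you discard is precisely what rescues convergence: it equals $-\pi m^2y^2|c\tau+d|^2/(Dv)$ and decays super-exponentially in $|c\tau+d|$. The paper keeps it, via the integral representation of $\K$ together with the observation that $n\mid m$ forces $m^2/n^2\ge 1$ (so the decay is uniform in $n$); you acknowledge it only in a parenthesis. The same repair is needed in your regime $t<v_0$ for $m>0$.

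Relatedly, your bookkeeping of the polynomial factors is not actually derived from your argument. In the regimes $t\ge v_0$, absolute convergence is not the mechanism at all: these are \emph{finite} sums, over the $O_K(Dn^2)$ cosets satisfying $|c\tau+d|^2\le Dn^2v/v_0$, each term being $O_K(1)$ times the exponential bound. Counting these cosets is exactly what produces the factor $m^2$, and combining it with $\sum_{n\mid m}n=O(m^2)$ gives the $m^4$ of the statement; with your (incorrect) $|c\tau+d|^{-3}$-convergence there would be no polynomial loss to absorb, so ``up to polynomial factors'' is an answer asserted rather than proved. Once you (i) correct the slash exponent to $-3/2$, (ii) retain the Gaussian factor $e^{-\pi w^2/t}$ for $t<v_0$ and for all $t$ when $m<0$, and (iii) count the finitely many cosets with $t\ge v_0$, your argument coincides with the paper's proof.
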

\begin{proof}
    We start with $m>0$. Using $\erfc(x) = 2-\erfc(-x)$, we may write
    \begin{align}
    &\widetilde{g}_{Dn^2}(\tau; my)\notag 
    = \frac{1}{4}\sum_{\substack{\gamma \in \widetilde{\Gamma}_\infty \backslash \widetilde{\Gamma} \\ Dn^2\Im(\gamma \tau) \geq v_0}}
          \left( e\left( \frac{-Dn^2\tau}{4} \right) \e_{-Dn^2} \right)\bigg|_{\frac32,\varrho_L} \gamma\ \notag\\
          &\phantom{=}- \
          \frac18\sum_{\substack{\gamma \in \widetilde{\Gamma}_\infty \backslash \widetilde{\Gamma} \\ Dn^2\Im(\gamma \tau) \geq v_0}}
          \left(  e\left( \frac{-Dn^2\tau}{4} \right)\erfc\left( \sqrt{\pi D n^2 v} - \frac{\sqrt{\pi}my}{\sqrt{Dn^2v}} \right) \e_{-Dn^2} \right)\bigg|_{\frac32,\varrho_L} \gamma \label{eq:gtt2}\\
          &\phantom{=} + \frac{1}{8\pi}\!\!\!\sum_{\substack{\gamma \in \widetilde{\Gamma}_\infty \backslash \widetilde{\Gamma} \\ Dn^2\Im(\gamma \tau) \geq v_0}}
          \left( \frac{1}{\sqrt{Dn^2 v}}\exp\left(-\frac{\pi i D n^2u}{2}\right)\exp\left(-\pi\left( \frac{m^2y^2}{Dn^2 v} - 2 m y + \frac{Dn^2 v}{2} \right) \right)\e_{-Dn^2} \right)\bigg|_{\frac32,\varrho_L} \gamma \label{eq:gtt3}\\
          &\phantom{=} +\ \frac14\sum_{\substack{\gamma \in \widetilde{\Gamma}_\infty \backslash \widetilde{\Gamma} \\ Dn^2\Im(\gamma \tau) < v_0}}
          \left( \K_{my}(Dn^2v) e\left( \frac{-Dn^2\tau}{4} \right) \e_{-Dn^2} \right)\bigg|_{\frac32,\varrho_L} \gamma. \label{eq:gtt4}
  \end{align}
     We split \eqref{eq:gtt2} and \eqref{eq:gtt3} into two sums over $Dn^2\Im(\gamma\tau) \geq my$ and $Dn^2\Im(\gamma\tau) < my$, respectively,
     and use the estimate $\erfc(x) \leq \exp(-x^2)$ for $x\geq 0$ and $\erfc(x) \leq 2$ for $x<0$ to obtain a bound for these two sums as follows:
     \begin{multline*}
       \frac{1}{8}\left(1+\frac{1}{\pi \sqrt{v_0}}\right)\exp\left( \frac32 \pi m y \right)
        \sum_{\substack{\gamma \in \widetilde{\Gamma}_\infty \backslash \widetilde{\Gamma} \\ Dn^2\Im(\gamma \tau) \geq \max\{v_0,m_y\}}} \e_{-Dn^2} \Big|_{\frac32,\varrho_L}\gamma \\
        + \frac{1}{4}
        \sum_{\substack{\gamma \in \widetilde{\Gamma}_\infty \backslash \widetilde{\Gamma} \\ my > Dn^2\Im(\gamma \tau) \geq v_0}} \e_{-Dn^2} \Big|_{\frac32,\varrho_L}\gamma
       = O_K\left(m^2 \exp\left( \frac32 \pi m y \right) \right).
     \end{multline*}
For the remaining part \eqref{eq:gtt4}, we note that
\[
  \left|\K_{my}\left(Dn^2v\right) e\left( -\frac{Dn^2 v}{4} \right)\right| = e^{-\frac{\pi Dn^2 v}{2}} \frac{m^2y^2}{(Dn^2v)^{\frac32}}
                                             \int_{1}^\infty \exp\left(2\pi m t - \frac{\pi m^2y^2t^2}{2Dn^2 v}\right)\exp\left(-\frac{\pi m^2y^2t^2}{2Dn^2 v}\right)t dt.
\]
The term $2\pi m t - \frac{\pi m^2y^2t^2}{2Dn^2 v}$ attains a global maximum at $t = \tfrac{2v}{y}$ (which can only happen if $v \geq \tfrac{y}{2}$).
Moreover, since $n \mid m$, we have that $\frac{m^2}{n^2} \geq 1$ and thus we obtain the bound
\[
  \left|\K_{my}\left(Dn^2v\right) e\left( -\frac{Dn^2 v}{4} \right)\right| \leq e^{\frac{3 \pi Dn^2 v}{2}} \frac{m^2y^2}{(Dn^2v)^{\frac32}}
                                             \int_{1}^\infty \exp\left(-\frac{\pi y^2t^2}{2D v}\right)t\, dt = \frac{2}{\pi \sqrt{Dv}} e^{\frac{3 \pi Dn^2 v}{2}-\frac{\pi y^2}{2Dv}}.
\]
Since the series
\[
    \sum_{\gamma \in \widetilde{\Gamma}_{\infty}\setminus \widetilde{\Gamma}}\left(e^{-\frac{\pi y^2}{2Dv}}v^{-\frac{1}{2}}\e_{-Dn^2}\right)\bigg|_{\frac32, \varrho_L} \gamma
\]
is absolutely convergent and for our purposes it is enough to estimate the divisor sum as $O(m^2)$ to finish the proof for $m>0$.
For $m<0$ we can simply use that
\[
  \left|\K_{my}\left(Dn^2v\right) e\left( -\frac{Dn^2 v}{4} \right)\right| \leq \frac{2}{\pi \sqrt{Dv}}\, e^{2\pi my - \frac{\pi y^2}{Dv}}.
\]
With the same bound on the divisor sum, we obtain the statement of the lemma.

To see that the same estimate also holds for the partial derivatives, first note that it is enough to show this for the partial derivatives
with respect to $\tau$ and $\overline{\tau}$ and that
\[
  \frac{\partial}{\partial \tau} = -iR_k +i \frac{k}{v} \ \text{ and }\ \frac{\partial}{\partial \overline{\tau}} = \frac{L_k}{2iv^2}.
\]
To work with $R_k$ and $L_k$ is more convenient since these operators satisfy $L_k (f |_k \gamma) = (L_k f) |_{k-2} \gamma$ and
$R_k (f|_k\gamma) = (R_k f)|_{k+2} \gamma$, respectively.
Now a simple calculation shows that
\[
  L_k \left(\K_{my}\left(Dn^2v\right)\right) = \frac { 4\,\pi \,{D}^{2}{n}^{4}{v}^{2}+2\,\pi \, D {n}^{2}vmy-2\,\pi \,{m}^{2}{y}^{2}+Dn^2v }{ 4(Dn^{2})^{\frac{3}{2}} \sqrt{v}\pi }
{e^{-{\frac {\pi \, \left(Dn^{2}v-my \right) ^{2}}{D {n}^{2}v}}}}
\]
and
\begin{align*}
    & R_k \left(\K_{my}\left(Dn^2v\right)e\left(\frac{-Dn^2\tau}{4}\right)\right)e\left( \frac{Dn^2\tau}{4} \right)\\ &\quad= \frac{3-2\pi v}{4v}\erfc\left(\sqrt{\pi} \left(\frac{my-Dn^2v}{Dn^2v}\right)\right)
 + \frac{2\pi Dn^2v+\pi my+\pi v-1}{2\pi(Dn^2 v)^{\frac{3}{2}}}\exp\left(-\pi \left(\frac{my-Dn^2v}{Dn^2v}\right)^2\right) \\
&\qquad - \frac{m^2y^2}{2(Dn^2v^2)^{\frac52}}\exp\left(-\pi \left(\frac{my-Dn^2v}{Dn^2v}\right)^2\right).
\end{align*}
Hence, we can use similar estimates as the ones given above to obtain the growth estimates for these derivatives
and it is now clear that for higher derivatives, at most the power of $m$ changes.
\end{proof}

\printbibliography
\end{document}
%%% Local Variables:
%%% mode: latex
%%% TeX-master: t
%%% End: